\definecolor{lien}{rgb}{0.7,0,0}
\definecolor{citation}{rgb}{0,0.5,0.5}
\theoremstyle{plain}
\newtheorem{thm}{Theorem}
\newtheorem{prop}[thm]{Proposition}
\newtheorem{lemme}[thm]{Lemma}
\newtheorem{coro}[thm]{Corollary}
\theoremstyle{definition}
\newtheorem{defn}[thm]{Definition}
\def\N{\mathbb{N}}
\def\R{\mathbb{R}}
\def\Q{\mathbb{Q}}
\def\Z{\mathbb{Z}}
\def\E{\mathbb{E}}
\def\H{\mathbb{H}}
\def\di{\displaystyle}
\def\Tau{\mathrm{T}}
\def\eucl{\mathrm{eucl}}
\def\d{\mathrm{d}}
\def\noi{\noindent}
\newcommand{\pullback}[1]{{#1^*\scalprod{\cdot}{\cdot}}}
\newcommand{\scalprod}[2]{\left\langle #1, #2 \right\rangle}
\newcommand{\od}{{\rm Int} D^2}
\title{The Hyperbolic Plane in $\E^3$}
\author{
  Vincent Borrelli
  \thanks{Universit\'e Claude Bernard Lyon 1, CNRS, Institut Camille Jordan, F-69622 Villeurbanne, France.}
  \and
  Roland Denis\footnotemark[1] 
  \and
  Francis Lazarus \thanks{G-SCOP/Institut Fourier, Université Grenoble Alpes, France. This author is partially supported by the LabEx PERSYVAL-Lab (ANR-11-LABX-0025-01) funded by the French program Investissement d’avenir.}
  \and
  Mélanie Theillière \thanks{Department of Mathematics, University of Luxembourg. This author was supported by the Luxembourg National Research Fund (FNR) O21/16309996/HypSTER.}
  \and
  Boris Thibert \thanks{Univ. Grenoble Alpes, CNRS, Grenoble INP, LJK, 38000 Grenoble, France. This author is supported by the French ANR project MAGA (ANR-16-CE40-0014).}
}
\begin{document}
\maketitle
\begin{abstract}
  \noi
  We build an explicit $C^1$ isometric embedding $f_{\infty}:\H^2\to\E^3$ of the hyperbolic plane whose image is relatively compact. 
  Its limit set is a closed curve of Hausdorff dimension 1. Given an initial embedding $f_0$, our construction generates iteratively a sequence of maps by adding at each step $k$ a layer of $N_{k}$ corrugations. To understand the behavior of $df_\infty$ we introduce a \emph{formal corrugation process} leading to a \emph{formal analogue} $\Phi_{\infty}:\H^2\to \mathcal{L}(\R^2,\R^3)$. We show a self-similarity structure for $\Phi_{\infty}$. We next prove that $df_\infty$ is close to $\Phi_{\infty}$ up to a precision that depends on the sequence $N_*:= (N_{k})_k$.
  We then introduce the \emph{pattern maps} $\boldsymbol{\nu}_{\infty}^\Phi$  and $\boldsymbol{\nu}_{\infty}$, of  respectively $\Phi_{\infty}$ and $df_\infty$, that together with $df_0$ entirely describe the geometry of the Gauss maps associated to $\Phi_{\infty}$ and $df_\infty$. For well chosen sequences of corrugation numbers, we finally show an asymptotic convergence of $\boldsymbol{\nu}_{\infty}$  towards $\boldsymbol{\nu}_{\infty}^\Phi$ over circles of rational radii.
\end{abstract}

\vspace{2cm}
\noi
\textbf{Mathematics subject classification.} 53C42 (Primary), 53C21, 30F45\\
\textbf{Keywords.} Convex integration; Hyperbolic plane; Isometric embedding

\newpage
\tableofcontents

\section{Introduction}
The Hilbert-Efimov theorem asserts that  the hyperbolic plane $\H^2$ does not admit any $C^{2}$ isometric embedding into the Euclidean 3-space $\E^3$~\cite{hilbert,efimov}. In contrast, the $C^1$ embedding theorem of Nash~\cite{n-c1ii-54} as extended by Kuiper~\cite{k-oc1ii-55,kuiperII} shows the existence of infinitely many $C^1$ isometric embeddings of $\H^2$ into $\E^3$. Since $\H^2$ is non-compact, the question of the behavior of such embeddings at infinity arises naturally. Following Kuiper~\cite{kuiperII} and De Lellis~\cite{delellis}, we consider the limit set $\text{Limset}(f)$ of a map $f:\H^2\to\E^3$. This is the set of points in $\E^3$ that are limits of sequences $(f(p_n))_n$, where $(p_n)_n$ is a sequence of points of $\H^2$ converging to infinity in the Alexandroff one point compactification of $\H^2$.
In 1955, Kuiper~\cite{kuiperII} exhibited an isometric embedding of $\H^2$ in $\E^3$ whose image is unbounded and with void limit set.
More than sixty years later, De Lellis~\cite{delellis} was able to extend this result in codimension two for any Riemannian $n$-dimensional manifold by prescribing the limit set. In the case of $\H^2$ the existence of a nonempty limit set in $\E^4$ implies the following counter-intuitive fact: any point of $\text{Limset}(f)$ is at infinite distance from every other point of $f(\H^2)$ for the metric induced by $\E^4$. Indeed, any path joining a point of $\H^2$ to a point on the boundary at infinity $\partial_{\infty}\H^2$ has infinite length as well as its image in $\E^4$.
\bigskip

\noi
In this paper we consider isometric embeddings of $\H^2$ in codimension one with nonempty limit set. We construct maps that naturally extend to the boundary at infinity $\partial_{\infty}\H^2$ so that their limit sets are images of $\partial_{\infty}\H^2$ by the extensions. We thus obtain maps defined over the compact domain $\overline{\H^2}=\H^2\cup \partial_{\infty}\H^2$ so that we may now study the regularity of the extensions transversely to the boundary.  We shall work with the Poincaré disk model $\H^2=(\od, h)$, where $D^2$ is the closed unit disk of the Euclidean plane $\E^2$ and $h=4\frac{dx^2+dy^2}{(1-x^2-y^2)^2}$ is the hyperbolic metric. We obtain the following results.

\begin{thm}\label{thm:H2}
There exists a map $f_\infty: D^2\to \E^3$ which is $\beta$-Hölder for any $0<\beta<1$ and whose restriction to the interior is a $C^1$-isometric embedding of the hyperbolic plane $\H^2.$ Its limit set is a closed curve of Hausdorff dimension~1.
\end{thm}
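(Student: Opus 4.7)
The plan is to construct $f_\infty$ as the $C^1$-limit of an iterative corrugation scheme of Nash--Kuiper type on the Poincaré disk $\od$, and then to control the boundary behaviour of the limit in order to obtain simultaneously the Hölder regularity and the structure of the limit set.

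First, I would choose a concentric exhaustion $D_0 \subset D_1 \subset \cdots$ of $\od$ by closed Euclidean disks with $\bigcup_k D_k = \od$, and denote by $A_k$ the corresponding closed annuli $D_k \setminus \mathring{D}_{k-1}$. One starts from a strictly short initial map $f_0 : D^2 \to \E^3$ whose image is contained in a small Euclidean ball, so that the boundedness of the final image is secured a priori by choosing all corrugation amplitudes small enough. The iterative step produces $f_k$ from $f_{k-1}$, with $f_k \equiv f_{k-1}$ on $D_{k-1}$ and $f_k$ obtained on $A_k$ by inserting $N_k$ one-dimensional corrugations of amplitude $\alpha_k$ along a direction that cancels the residual isometric default with respect to $h$. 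The classical corrugation estimates then give
\[
\|f_k - f_{k-1}\|_{C^0(A_k)} \lesssim \alpha_k/N_k, \qquad \|df_k - df_{k-1}\|_{C^0(A_k)} \lesssim \sqrt{\varepsilon_{k-1}},
\]
where $\varepsilon_k$ denotes the remaining isometric defect on $D_k$, in turn controlled by $\alpha_k \, \|d^2 f_{k-1}\|_{C^0(A_k)}/N_k$. A judicious choice of the triple $(N_k, \alpha_k, \varepsilon_k)$ making the series $\sum_k \sqrt{\varepsilon_{k-1}}$ and $\sum_k \alpha_k/N_k$ finite yields $C^1$-convergence on every compact subset of $\od$, hence a $C^1$ map $f_\infty : \od \to \E^3$ with $\pullback{f_\infty} = h$.

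The main obstacle is to propagate, \emph{in addition} to these interior estimates, a quantitative bound strong enough to give a $\beta$-Hölder extension to $\partial D^2$ for arbitrary $\beta < 1$. This is delicate because the hyperbolic metric blows up on $\partial D^2$, so the Euclidean $C^1$ norms of $f_k$ on $A_k$ unavoidably grow with $k$; only the $C^0$ displacements can remain summable near the boundary. I would enforce, on top of the estimates above, the sharpened requirement $\|f_k - f_{k-1}\|_{C^0(A_k)} \le C\, w_k^{\beta}$, where $w_k$ is the Euclidean width of $A_k$; this is feasible by letting $N_k$ grow fast enough (typically doubly exponentially in $k$) relative to the amplitudes $\alpha_k$, which are themselves dictated by the isometric default. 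Telescoping $f_\infty - f_0 = \sum_j (f_{j+1} - f_j)$ and summing the resulting $w_j^\beta$ contributions over the annuli crossed by a Euclidean segment from $p$ to $q$ in $D^2$ yields a global $\beta$-Hölder estimate, hence a Hölder extension $\overline{f}$ of $f_\infty$ to all of $D^2$.

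For the last assertion, the limit set of $f_\infty$ is identified with $\overline{f}(\partial D^2)$, and is therefore automatically a closed curve as the continuous image of a circle. The upper bound $\dim_H \text{Limset}(f_\infty) \le 1$ follows immediately from the Hölder estimate, since a $\beta$-Hölder image of $\partial D^2$ has Hausdorff dimension at most $1/\beta$ and $\beta$ may be taken arbitrarily close to $1$. For the matching lower bound it suffices to show that this image is not reduced to a single point, since a connected metric space with more than one point has Hausdorff dimension at least $1$; non-degeneracy in turn follows from the self-similarity of the formal analogue $\Phi_\infty$ advertised in the abstract, which provides quantitative lower bounds on the tangential oscillation of $f_\infty$ in each annulus and forces the boundary map $\overline{f}$ to take distinct values on distinct arcs of $\partial D^2$.
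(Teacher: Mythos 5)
Your central structural idea—freezing $f_k\equiv f_{k-1}$ on $D_{k-1}$ and only corrugating on the annulus $A_k$—does not produce an $h$-isometric limit. In any Nash–Kuiper-type scheme a single stage of corrugation reduces the isometric default $h-\pullback{f_k}$ but never cancels it exactly (the target differential $L$ of the Corrugation Process is exact only in the limit $N\to\infty$, and one finite stage leaves an $O(1/N)$ error). Consequently, if $f_k=f_{k-1}$ on $D_{k-1}$ for every $k$, then $f_\infty=f_m$ on $D_m$ and $\pullback{f_\infty}|_{D_m}=\pullback{f_m}|_{D_m}\ne h|_{D_m}$: the limit would be strictly short everywhere, not isometric. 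In addition, a corrugation in a single direction can only increase the metric in a rank-one direction ($\eta\,\ell\otimes\ell$); a positive-definite default on a surface requires at least three linearly independent squares $\ell_i\otimes\ell_i$, which is why the paper fixes three wavefront forms and runs a 3-corrugated process. So your step ``$N_k$ one-dimensional corrugations along a direction that cancels the residual default'' is not available.

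The paper avoids both difficulties by \emph{not} freezing: at each step it corrugates the entire annulus $C$ three times (once for each of the fixed forms $\ell_1,\ell_2,\ell_3$), targeting a metric $g_k$ that is a truncated Taylor series of the hyperbolic default and therefore \emph{bounded on all of $D^2$ including $\rho=1$}. The Hölder regularity then comes from a different mechanism than your annulus-width bookkeeping: the interpolation inequality $\|F\|_{C^{0,\beta}}\leq 2^{1-\beta}\|dF\|_\infty^\beta\|F\|_\infty^{1-\beta}$ applied to $F=f_k-f_{k-1}$, combined with the fact that $\|g_k-g_{k-1}\|_{C,\infty}$ grows only polynomially in $k$ while the $C^0$-increments $\tau_k$ decay exponentially. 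Your geometric-series-over-annuli heuristic is trying to encode the same growth-versus-decay balance, but in a framework where the isometry cannot hold; you would need to drop the freezing and instead choose a global sequence of bounded intermediate metrics, which is precisely what the paper does. Finally, your Hausdorff-dimension argument (upper bound from $\beta$-Hölder for all $\beta<1$, lower bound from connectedness plus non-degeneracy) is sound in outline, but the appeal to ``self-similarity of $\Phi_\infty$'' for non-degeneracy is unsubstantiated; a more direct argument would bound the diameter of the limit set away from zero using the corrugation amplitudes on a fixed outer annulus.
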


\begin{figure}[ht]
    \centering
	\includegraphics[width = 1\textwidth]{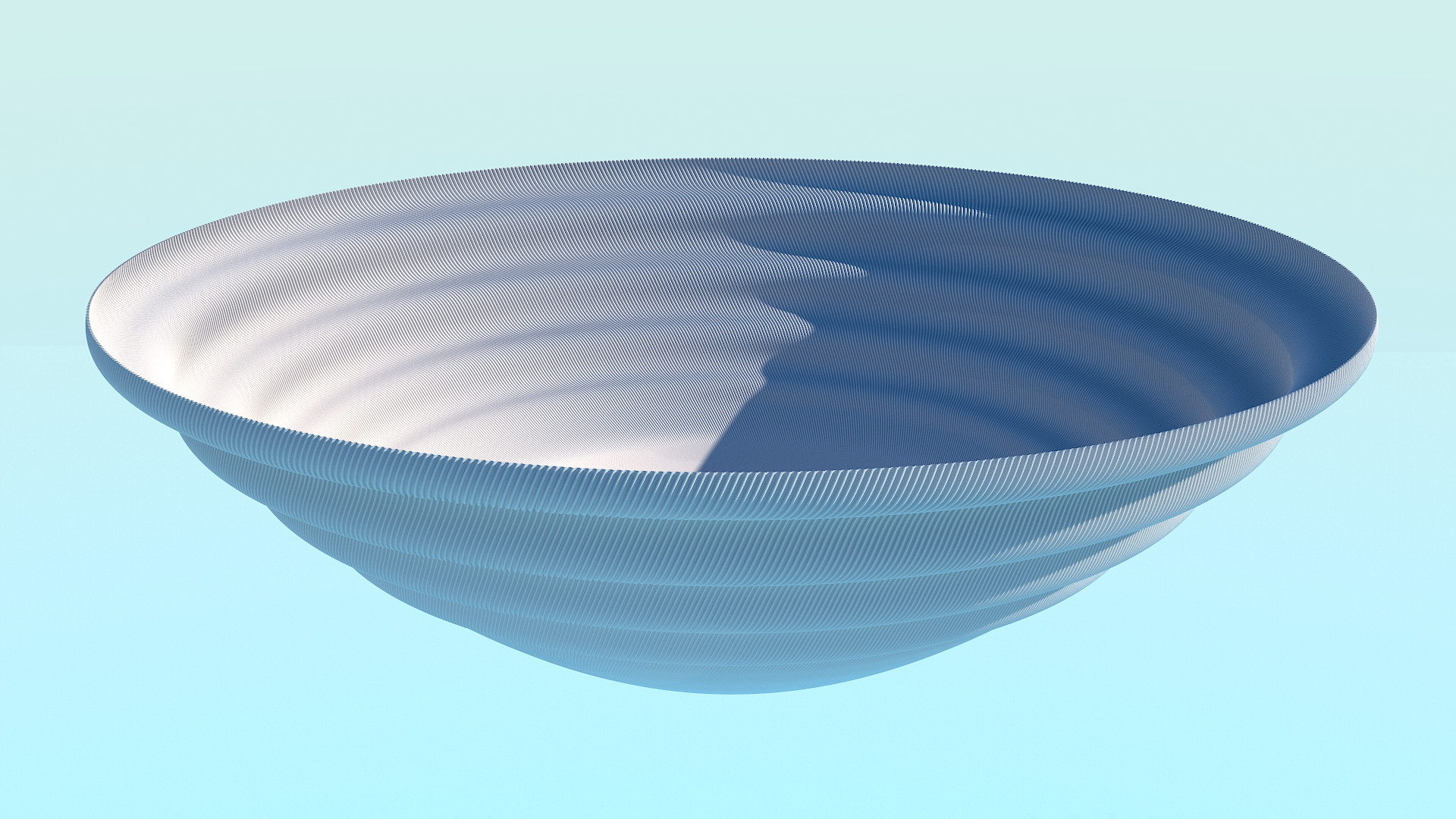}\\
	\caption{Global view of a 3-corrugated $C^1$-isometric embedding of $\H^2$ whose limit set is a closed curve of Hausdorff dimension 1. All graphics renderings in this article are obtained with the corrugation numbers 
          $\num{10}, \num{100}, \num{1 000}, \num{20 000}, \num{2 000 000}, \num{240 000 000}$. }
        \label{fig:global-view}
\end{figure}
\begin{figure}[ht]
    \centering
	\includegraphics[width = 1\textwidth]{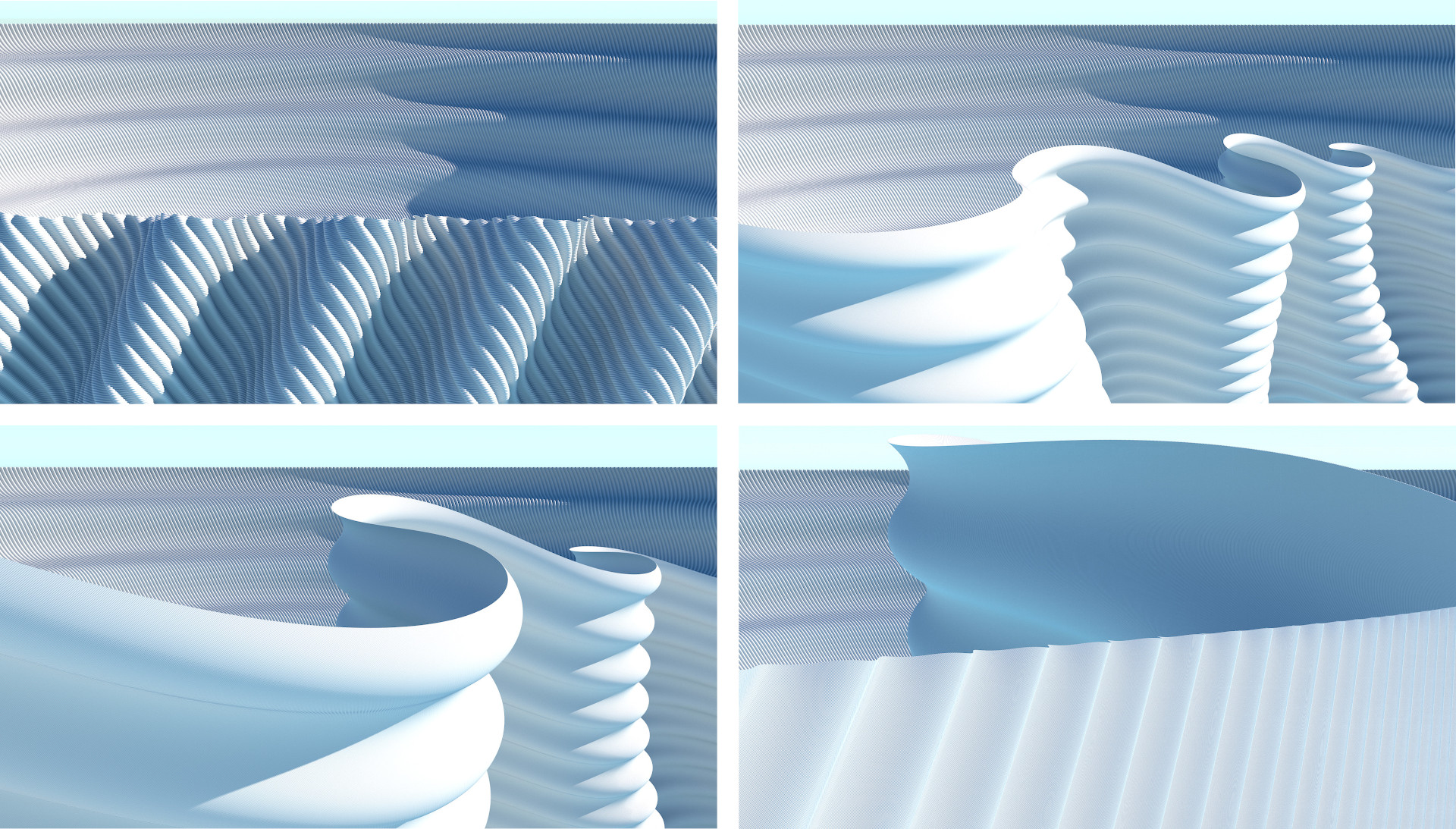}
	\caption{Zoom in on the limit set: at every scale a new layer of corrugations appears.}
        \label{fig:zoom-in}
      \end{figure}
      \noi
See Figures~\ref{fig:global-view} and~\ref{fig:zoom-in} for a graphic rendering of such an embedding.
The $\beta$-Hölder regularity in Theorem~\ref{thm:H2} is the best we can hope for: in any embedding with $\beta=1$ -- that is, of Lipschitz regularity -- the image of a radius of $D^2$ would have finite length which would be in contradiction with the fact that a curve going to infinity in the hyperbolic plane must have infinite length.

\paragraph{Embedding vs immersion.} We build our isometric embeddings by first choosing an initial embedding $f_0: D^2\to \E^3$ such that the induced pullback metric $g_0:= \pullback{f_0}$ is \emph{strictly short} over $\od$, i.e. $g_0< h$. We next choose a sequence of metrics $(g_k)_k$ defined on $D^2$ and converging to the hyperbolic metric $h$ on $\od$. We then construct a sequence of maps $(f_k)_k$ where $f_{k}$ is obtained from $f_{k-1}$ by adding waves with appropriate directions and frequencies. These \emph{corrugations} increase the lengths in such a way that $f_{k}$ is approximately $g_{k}$-isometric. If the convergence of the metrics is fast enough then the sequence $(f_k)_k$ converges to an $h$-isometric limit map $f_\infty$ of $C^1$ regularity.
The rate of convergence of the sequence $(g_k)_k$ has a strong impact on the properties of $f_\infty$, including the $C^1$ regularity and the embedded character. This rate forces the relative increase of lengths at each corrugation. When this increase is too large, the successive corrugations intersect each other as in Figure~\ref{fig:corrugations1-2}.
\begin{figure}[ht]
    \centering
    \includegraphics[width=.8\linewidth]{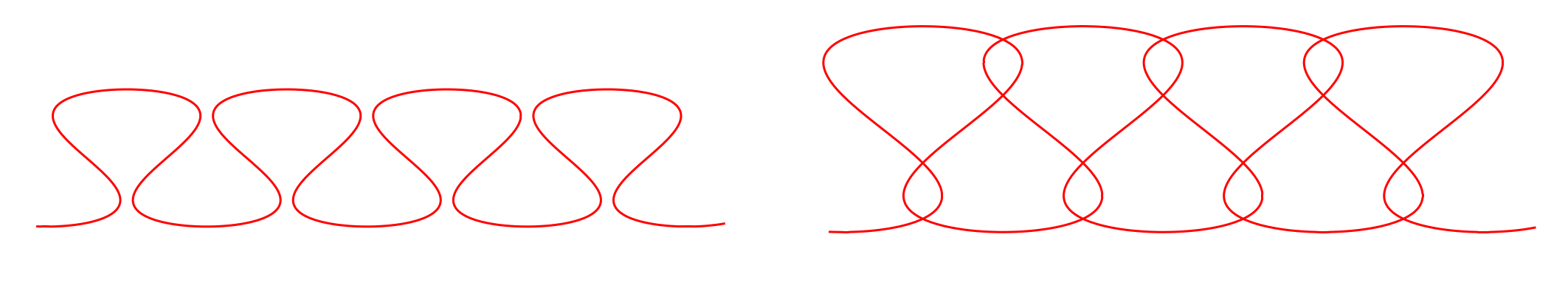}
    \caption{Increasing the length creates self-intersections.}
    \label{fig:corrugations1-2}
\end{figure}
Note that increasing the corrugation frequencies replaces the local behavior by an approximately homothetic figure, hence does not remove the self-intersections. This phenomenon is reminiscent of sawtooth curves of fixed length for which increasing the number of teeth replaces teeth by homothetic ones with the same slope. Although the hyperbolic metric explodes on the boundary, we manage to build a sequence $(g_k)_k$ of metrics whose rate of increase is bounded, allowing us to ensure that the limit surface is embedded. 

\paragraph{Fractal behavior and 3-corrugated embeddings.} A connection between $C^1$ isometric embeddings and fractal behavior has been observed for the construction of isometric embeddings of the flat torus and of the reduced sphere~\cite{PNAS,FOCM}. The self-similarity behavior arises from a specific construction that iteratively deforms a surface in a constant number of given directions. This leads us to introduce the notion of $3$-corrugated embedding.\\ 

\noi In the Nash and Kuiper approach, the map $f_k$ is built from $f_{k-1}$ by applying several corrugation steps. The number of steps and directions of the corrugations depend on the isometric default of $f_{k-1}$ with respect to $g_k$:

\begin{eqnarray*}
    D_k & := & g_{k}-\pullback{f_{k-1}}.
\end{eqnarray*}
This default is a positive definite symmetric bilinear form that can be expressed as a finite linear combination of squares of linear forms $\ell_{k,1},\cdots \ell_{k,n_k}$:
\begin{eqnarray}
\label{eq:decomposition_Dk_general}
D_k=\sum_{i=1}^{n_k}\eta_{k,i}\ell_{k,i}\otimes\ell_{k,i}
\end{eqnarray}
where each $\eta_{k,i}$ is a smooth positive function defined on the compact domain~$D^2$ (see \cite{n-c1ii-54}). 
Then, a sequence of intermediary maps 
$$f_{k-1}=f_{k,0},\; f_{k,1},...,\; f_{k,n_k}=f_k$$
is iteratively constructed by adding corrugations whose wavefronts have directions determined by $\ker \ell_{k,i}$. For every $i\in \{1,\ldots, n_k\}$, the amplitude and frequency of the corrugation are chosen so that $f_{k,i}$ is approximately isometric for the metric $\pullback{f_{k-1}} + \sum_{j=1}^i \eta_{k,j} \ell_{k,j} \otimes \ell_{k,j}$. In particular, $f_{k,n_k}$ is approximately $g_k$-isometric.\\

\noi
In our approach, we use an isometric default $D_{k,i} = g_k -\pullback{f_{k,i-1}}$ updated at each step $(k,i)$. This allows to construct $f_{k,i}$ using only the data of $f_{k,i-1}$ and $g_k$. We also manage to choose an initial map and three fixed linear forms $\ell_1,\ell_2,\ell_3$ such that~\eqref{eq:decomposition_Dk_general} hold for all $k$. This is in contrast with the Nash and Kuiper approach where the linear forms $\ell_{k,i}$ and their number $n_{k}$ depend on~$k$. Such a dependence prevents the appearance of any form of self-similarity in the limit.  
This motivates the introduction of the following notion, see Section~\ref{subsec:3-corrugated}. 
\begin{defn}
We say that the sequence $(f_{k,i})_{k,i}$ and its limit $f_{\infty}$ are \mbox{\emph{$n$-corrugated}} whenever $n_k=n$ and these linear forms are independent of $k$.
\end{defn}

\begin{prop}\label{prop:3-corrugated}
The embedding of Theorem~\ref{thm:H2} can be chosen $3$-corrugated.
\end{prop}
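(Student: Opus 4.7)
The plan is to exhibit three fixed linear forms $\ell_1, \ell_2, \ell_3$ on $\R^2$ and an initial map $f_0$ for which the Nash-Kuiper-type iteration producing $f_\infty$ uses only these three directions at every stage. The key observation is that the space of symmetric $(0,2)$-tensors on $\R^2$ is three-dimensional, so three independent rank-one tensors $\ell_i \otimes \ell_i$ form a basis; the question is to make the open cone $\mathcal{C} = \{\sum \eta_i \ell_i \otimes \ell_i : \eta_i > 0\}$ contain the isometric defaults that appear along the construction. I would take $\ell_i = \cos\theta_i\, dx + \sin\theta_i\, dy$ with $\theta_i \in \{0, \pi/3, 2\pi/3\}$, three equally spaced directions. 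A direct computation yields
\[
\sum_{i=1}^3 \ell_i \otimes \ell_i = \tfrac{3}{2}(dx \otimes dx + dy \otimes dy),
\]
so $\mathcal{C}$ contains an open neighborhood of every strictly positive conformal multiple of the Euclidean metric. Since the hyperbolic metric $h$ is itself conformal, I would then choose $f_0$ so that $g_0 := \pullback{f_0}$ is conformal and strictly short on $\od$ (a scaled spherical cap works), and arrange the sequence $(g_k)_{k\ge 1}$ of intermediate metrics to be conformal as well, say $g_k = \phi_k^2(dx^2 + dy^2)$ with $\phi_k \nearrow 2/(1-x^2-y^2)$ on $\od$ at a rate compatible with the convergence conditions of Theorem~\ref{thm:H2}.

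With this setup, the proof proceeds by induction on $(k,i)$: at each sub-step the updated default $D_{k,i} = g_k - \pullback{f_{k,i-1}}$ is decomposed in the basis $\{\ell_j \otimes \ell_j\}_{j=1,2,3}$, and the coefficient of $\ell_i \otimes \ell_i$ provides the amplitude $\eta_{k,i}$ of the $i$-th corrugation. The base case $D_{1,1} = g_1 - g_0$ is a positive conformal tensor, hence decomposes in $\mathcal{C}$ with three equal positive coefficients. For the inductive step, the $i$-th corrugation is designed so that $\pullback{f_{k,i}}$ differs from $\pullback{f_{k,i-1}}$ by $\eta_{k,i}\, \ell_i \otimes \ell_i$ up to an error $E_{k,i}$ whose $C^0$-norm can be made arbitrarily small by raising the corrugation frequency $N_{k,i}$; therefore
\[
D_{k,i+1} = D_{k,i} - \eta_{k,i}\,\ell_i \otimes \ell_i - E_{k,i},
\]
so the $i$-th coefficient drops to nearly zero while the other two only shift by $O(\|E_{k,i}\|)$. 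Passing from step $k$ to step $k+1$, the jump $g_{k+1} - g_k$ is a positive conformal tensor that replenishes all three coefficients simultaneously, bringing $D_{k+1,1}$ back inside $\mathcal{C}$.

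The main obstacle is quantitative: the coefficients $\eta_{k,i}$ must stay strictly positive uniformly over $\od$, despite the divergence of $h$ near $\partial D^2$. The conformal choice of all metrics is precisely what makes this tractable, since up to a common conformal weight, $D_{k,i}$ is a small perturbation of a positive multiple of the identity, hence lies at a definite positive distance from $\partial \mathcal{C}$. One must then simultaneously tune the conformal factors $\phi_k$ (which control $g_{k+1} - g_k$) and the corrugation frequencies $N_{k,i}$ (which control $E_{k,i}$) so that every $E_{k,i}$ stays negligible compared with the smallest $\eta_{k,j}$, while still satisfying the rate conditions ensuring the $C^1$ convergence and embedded character of the limit in Theorem~\ref{thm:H2}. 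Proposition~\ref{prop:3-corrugated} then follows: the whole sequence is produced with the three fixed linear forms $\ell_1, \ell_2, \ell_3$.
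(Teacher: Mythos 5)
Your approach is a genuinely different route from the paper's, and conceptually it is quite elegant for the purposes of Proposition~\ref{prop:3-corrugated} alone. You work in Cartesian coordinates with three equally spaced constant directions and arrange for every metric in sight ($\pullback{f_0}$, the $g_k$, the jumps $g_k-g_{k-1}$) to be conformal. The identity $\sum_i \ell_i\otimes\ell_i = \tfrac32\,\eucl$ then makes the cone condition essentially automatic: every positive conformal tensor has all three coordinates in the basis $(\ell_i\otimes\ell_i)_i$ equal and positive. The paper instead works on the annulus $C$ in polar coordinates, takes the radial form $\ell_1=-d\rho$ together with two spiral forms $\ell_{2,3}=d\rho\pm a\,d\varphi$ with $a=7/2\pi$, and a paraboloid as $f_0$; none of these metrics are conformal, so the cone membership of the isometric default $\Delta$ and of the truncated differences $g_k-g_{k-1}$ requires explicit checks (Lemmas~\ref{lem:parameter-a} and~\ref{lem:difference-gk}, where the constraint $a>1$ appears). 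What your choice buys is a cleaner verification of $(P_5)$ and you avoid the quotient and origin subtleties entirely. What it loses: the entire second half of the paper (Section~\ref{sec:Gauss_map}, the periodicity of the normal pattern, the self-similarity statements) relies on the rotational symmetry of $f_0$ and on the $\varpi_i$ being functions of $\rho$ and $\varphi$ with $\varpi_i(\rho,\varphi+2\pi)-\varpi_i(\rho,\varphi)\in\Z$; your Cartesian $\ell_i$ destroy that symmetry, so if one wants the results past Proposition~\ref{prop:3-corrugated} the paper's choices are forced.

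One point you should make tighter: the inductive preservation of the cone condition under the corrugation errors $E_{k,i}$ is exactly where the paper does its quantitative work (Lemmas~\ref{lem:Err}--\ref{lem:F_ki_well_defined}), and the analogue of the ``boundedness of relative increase'' that makes $\alpha_{k,i}<\pi/2$ and hence the limit an embedding (Lemma~\ref{lem:emb-fki}) must still be checked for your conformal $\phi_k$; the ratio $(\phi_k^2-\phi_{k-1}^2)/\phi_{k-1}^2$ has to stay small uniformly on $D^2$ while still forcing $\phi_k\uparrow 2/(1-\rho^2)$, and while satisfying both the Hölder rate condition and $(P_4)$. Your sketch acknowledges this tuning is needed but does not carry it out, whereas it is the technical heart of the paper's Section~\ref{sec:proof-thm-1}. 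As a proof plan, though, the structure is sound.
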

\noi
As illustrative examples, the embeddings of the square flat torus and of the reduced sphere in~\cite{FOCM,PNAS} are $3$-corrugated and have been shown to exhibit a fractal behavior. In the sequel, we will always consider 3-corrugated sequences.

\paragraph{Corrugation Process.}  There are several methods to construct $f_{k,i}$ from $f_{k,i-1}$, for instance by using Kuiper corrugations~\cite[Formula (6.7)]{k-oc1ii-55} or the convex integration formula of Gromov~\cite{g-pdr-86} and Spring~\cite[Formula (3.3) p. 35]{Spring-book}. All are based on the idea of corrugations  and introduce a free parameter $N_{k,i}\in\N^*$ called \textit{number of corrugations}. Here, we choose the corrugation process of~\cite{MT, massot2022formalising} to generate, at each step $(k,i)$, a map $f_{k,i}$ satisfying
$$\mu_{k,i}=f_{k,i}^*\langle\cdot,\cdot\rangle+O\left(\frac{1}{N_{k,i}}\right)$$
where $\mu_{k,i}:=\pullback{f_{k,i-1}}+\eta_{k,i}\ell_{k,i}\otimes\ell_{k,i}$. In our case of a 3-corrugated process, the  $\ell_{k,i}$ are given by a fixed set of three 1-forms $\ell_i$ with $i\in\{1,2,3\}$, and $\eta_{k,i}$ is the coefficient of $\ell_i\otimes\ell_i$ in the decomposition of $g_k-\pullback{f_{k,i-1}}$ in the basis $(\ell_j\otimes\ell_j)_{j\in\{1,2,3\}}.$ We then write
\begin{align}
   f_{k,i}=CP_i(f_{k,i-1},g_k,N_{k,i})\label{eq:CP_3-corrugated}
\end{align}
the map obtained from $f_{k,i-1}$ by a corrugation process with parameters $g_k$ and $N_{k,i}$.
Each maps $f_k=f_{k,3}$ is then approximately $g_k$-isometric in the sense that
\[\|\pullback{f_k}-g_k\|_{\infty}=O\left(\frac{1}{N_{k,1}}\right)+O\left(\frac{1}{N_{k,2}}\right)+O\left(\frac{1}{N_{k,3}}\right).\]
If the corrugation numbers are chosen large enough and if the convergence toward $h$ of the metrics $g_k$ is fast enough, namely if the series
\begin{eqnarray}
\label{eq:cv-metrics}
  \sum_k \|g_{k+1}-g_k\|_{K,\infty}^{1/2}<+\infty
\end{eqnarray}
converges on any compact set $K\subset \od$, then the sequence $(f_{k,i})_{k,i}$ converges toward a $C^1$ maps $f_{\infty}$ on $\od$ which is $h$-isometric, see Section~\ref{subsec:sequence_metrics_maps}.

\paragraph{Formal Corrugation Process.} We introduce in this work the notion of \emph{formal corrugation process}.
For a point $p\in D^2$, there are infinitely many isometric linear maps $L: (\R^2,\mu_{k,i}(p))\to \E^3$, i.e. that satisfy $\pullback{L}=\mu_{k,i}(p)$. By extension, there are infinitely many sections $L: D^2\to Mono(\R^2,\E^3)$ that are formal solutions of the $\mu_{k,i}$-isometric relation. In Section~\ref{subsec:target_differential}, we show that there exists a $\mu_{k,i}$-isometric section $L_{k,i}$ such that
\begin{align}
df_{k,i}=L_{k,i}+O(1/N_{k,i}). \label{eq:C1-proximity}
\end{align}
Moreover $L_{k,i}$ is given by a \emph{pointwise} formula of the form
\begin{eqnarray}
\label{eq:def-L_ki}
    L_{k,i} := df_{k,i-1} + \mathbf{z}_{k,i} \otimes \ell_{k,i}.
\end{eqnarray}
for some $\mathbf{z}_{k,i}:D^2 \rightarrow \E^3$ depending on $\eta_{k,i}$, $\ell_{k,i}$ and $N_{k,i}$. Note that there is no reason for the section $L_{k,i}$ to be holonomic\footnote{Recall that $L$ is holonomic if there exists a map $F: D^2\to \E^3$ such that $dF=L$.}. We say that $L_{k,i}$ is obtained by a \emph{formal corrugation process} and we write, analogously to~\eqref{eq:CP_3-corrugated}, 
\[L_{k,i}=FCP_i(df_{k,i-1},g_k,N_{k,i}).\]
The adjective \emph{formal} refers to the fact that $L_{k,i}$ is a formal solution of the $\mu_{k,i}$-isometric relation.

\paragraph{Formal analogues.}We now introduce the notion of formal analogues that will appear to encode the asymptotic behavior of the differential $df_\infty$. 
By iterating the formal corrugation process we obtain, in parallel to the 3-corrugated sequence $(f_{k,i})_{(k,i)}$,  a sequence $(\Phi_{k,i})_{(k,i)}$ of formal solutions given by
\[ \Phi_0=df_0 \quad\mbox{and}\quad \Phi_{k,i} = FCP_i (\Phi_{k,i-1},g_k,N_{k,i}).
\]
Under condition~\eqref{eq:cv-metrics} the sequence $(\Phi_{k,i})_{(k,i)}$ converges toward a $C^0$~map $\Phi_{\infty}$ on $\od$ (see Lemma~\ref{lem:sequence-Phi_ki-well-defined}). 
The map $\Phi_{\infty}$ is a formal solution for the $h$-isometric relation, that is
\[
    \pullback{\Phi_{\infty}}=h.
\]
If the corrugation process~\eqref{eq:CP_3-corrugated} were exact, that is, if $\pullback{f_{k,i}}=\mu_{k,i}$ for all $(k,i)$, then we would have $\Phi_{k,i} = df_{k,i}$. For that reason, we call $\Phi_{k,i}$ and $\Phi_{\infty}$ the \emph{formal analogues} of $df_{k,i}$ and $df_{\infty}$. In fact, the difference between the differential and its formal analogue depends on the corrugation numbers, see Section~\ref{subsec:comparing_Phi_df}. Theorem~\ref{thm:C1-density} below states that $df_{\infty}$ and $\Phi_{\infty}$ can be made arbitrarily close by choosing sufficiently large corrugation numbers. 


\begin{thm}\label{thm:C1-density}
Let $K\subset Int\,D^2$ be a compact set. For every $\varepsilon>0$ there exists a sequence of corrugation numbers $N_*=(N_{k,i})_{k,i}$ such that
\[\|\Phi_{\infty}-df_{\infty}\|_{K,\infty}\leq \varepsilon.
  \]
\end{thm}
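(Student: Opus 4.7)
The plan is to bound $e_{k,i} := \|\Phi_{k,i}-df_{k,i}\|_{K,\infty}$ uniformly in $(k,i)$ and then pass to the limit using the $C^0$-convergence of $(\Phi_{k,i})$ and $(df_{k,i})$ on $K$. At each stage I interpose the auxiliary section $L_{k,i} = FCP_i(df_{k,i-1}, g_k, N_{k,i})$ of~\eqref{eq:def-L_ki}, which is the output of the formal corrugation process applied to $df_{k,i-1}$ rather than to $\Phi_{k,i-1}$. The triangle inequality gives
\[
    e_{k,i} \leq \|\Phi_{k,i}-L_{k,i}\|_{K,\infty} + \|L_{k,i}-df_{k,i}\|_{K,\infty},
\]
and the second term is $O(1/N_{k,i})$ by~\eqref{eq:C1-proximity}, controllable by picking $N_{k,i}$ large.

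\paragraph{One-step recursion.} Both $\Phi_{k,i}$ and $L_{k,i}$ have the shape~\eqref{eq:def-L_ki}, applied to $\Phi_{k,i-1}$ and $df_{k,i-1}$ respectively, so that
\[
    \Phi_{k,i}-L_{k,i} = (\Phi_{k,i-1}-df_{k,i-1}) + (\mathbf{z}^{\Phi}_{k,i}-\mathbf{z}_{k,i})\otimes \ell_i.
\]
The vector fields $\mathbf{z}^{\Phi}_{k,i}$ and $\mathbf{z}_{k,i}$ are produced by the same pointwise recipe: a unit normal to the image of the input monomorphism, the coefficient $\eta_{k,i}$ obtained by decomposing $g_k$ minus the input pullback metric in the basis $(\ell_j\otimes\ell_j)_{j}$, and the fixed data $\ell_i, g_k, N_{k,i}$. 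I would argue that on $K$ this recipe is Lipschitz in the input monomorphism, with constant $\Lambda_{k,i}$ \emph{independent of $N_{k,i}$}, provided the inputs stay in a compact set of monomorphisms with uniformly positive-definite Gram matrices. For $\Phi_{k,i-1}$ this holds inductively because $\pullback{\Phi_{k,i-1}}$ is close to $h$ on $K$, itself positive-definite. One then obtains a one-step bound
\[
    e_{k,i} \leq (1+C_{k,i})\,e_{k,i-1} + \frac{D_{k,i}}{N_{k,i}},
\]
with $C_{k,i}, D_{k,i}$ depending only on data fixed before stage $(k,i)$.

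\paragraph{Choosing the corrugation numbers.} Crucially, the amplification $C_{k,i}$ is controlled by the isometric defect $\eta_{k,i}$, hence by $\|g_k-\pullback{\Phi_{k,i-1}}\|_{K,\infty}$, which vanishes at a rate governed by~\eqref{eq:cv-metrics}. From this I expect $\sum_{k,i} C_{k,i}$ to be finite, so that the telescoped product $\prod(1+C_{k,i})$ is bounded by some $M<\infty$. Granting this, I pick the $N_{k,i}$ inductively in lexicographic order on $(k,i)$: when $N_{k,i}$ comes up for selection, $C_{k,i}$ and $D_{k,i}$ are already determined, so $N_{k,i}$ may be chosen large enough that the contribution $D_{k,i}/N_{k,i}$, even after every future amplification, adds less than $\varepsilon\cdot 2^{-(3k+i)}/M$ to the budget. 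Since $\Phi_0=df_0$ gives $e_0 = 0$, telescoping yields $e_{k,i}\leq\varepsilon$ for all $(k,i)$, and the $C^0$-limit passes the inequality to $\|\Phi_\infty-df_\infty\|_{K,\infty}\leq\varepsilon$.

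\paragraph{Main obstacle.} The technical crux is the Lipschitz estimate for $\mathbf{z}_{k,i}$ with constant independent of $N_{k,i}$: the explicit formula from~\cite{MT} involves trigonometric functions with phases scaling as $N_{k,i}$, so one must isolate the $N_{k,i}$-free geometric content (unit normal, coefficient $\eta_{k,i}$, Gram matrix inversion) that carries the Lipschitz dependence on the input monomorphism. A subsidiary task is the summability of $\sum C_{k,i}$, which should follow from the vanishing of $\eta_{k,i}$ as the $g_k$ converge to $h$ under~\eqref{eq:cv-metrics}. Once these two points are settled, the remainder is a straightforward telescoping argument exploiting the freedom to enlarge each $N_{k,i}$ once the earlier stages are fixed.
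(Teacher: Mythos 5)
The central technical claim of your proposal — that the pointwise recipe $\phi\mapsto\phi^c$ defining the (formal) corrugation process is \emph{Lipschitz} in the input monomorphism, with constant independent of $N_{k,i}$ — is false, and the paper makes this explicit. The amplitude $\alpha$ in~\eqref{eq:def-Phi^c} is computed via $J_0^{-1}$, and its argument equals $1$ precisely when $\eta=0$. Near $\alpha=0$ one has $J_0(\alpha)\approx 1-\alpha^2/4$, so $J_0^{-1}(u)\approx 2\sqrt{1-u}$ near $u=1$: $J_0^{-1}$ has unbounded derivative there and is only $\tfrac12$-Hölder (Lemma~\ref{lem:J_0-holder}). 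The map $\phi\mapsto\phi^c$ inherits exactly this regularity (Lemma~\ref{lem:FCP-Holder-ponctuel}), with Hölder constant independent of $N$ but independent of any lower bound on $\eta$. If instead you try to extract a Lipschitz constant on the relevant range, it scales like $\eta_{k,i}^{-1/2}$, which \emph{grows} geometrically as $\eta_{k,i}\sim\|g_k-g_{k-1}\|\to 0$. So your expectation that $\sum_{k,i}C_{k,i}<\infty$ because the isometric defect vanishes has the dependence reversed: as the defect vanishes, the amplification constants blow up, and the infinite product $\prod(1+C_{k,i})$ diverges. Consequently the telescoping argument ("even after every future amplification") cannot close, since for each fixed $j$ the coefficient $\prod_{m>j}(1+C_m)$ is infinite and cannot be offset by enlarging $N_j$.

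The paper's proof differs structurally in a way that absorbs the Hölder-only regularity. Rather than bounding $e_{k,i}=\|\Phi_{k,i}-df_{k,i}\|_{K,\infty}$ uniformly over all $(k,i)$ and passing to the limit, it splits
\[
\|\Phi_\infty-df_\infty\|_{K,\infty}\le
\|\Phi_\infty-\Phi_{k^*}\|_{K,\infty}
+\|\Phi_{k^*}-df_{k^*}\|_{K,\infty}
+\|df_{k^*}-df_\infty\|_{K,\infty},
\]
fixes a finite $k^*$ so that both tails are $\le\varepsilon/3$ (by Lemma~\ref{lem:convergence_Phi_k_i} and the proof of Proposition~\ref{prop:Nash-Kuiper}), and then treats the middle finite-horizon term with the Hölder recursion. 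Because the one-step bound is $e_{k,i}\le C_k(K)\,e_{k,i-1}^{1/2}+\tau_k/3$ (Lemmas~\ref{lem:Phi_ki-f_ki-lambda-Lambda},~\ref{lem:comparison-monomorphism-immersion-finite}), the exponent of $\tau_1$ halves at each step, leading to $e_{k^*,3}\le M_{k^*,3}(K)\,\tau_1^{\kappa_{k^*,3}}$ with $\kappa_{k^*,3}=2^{-(3k^*-1)}>0$. For a \emph{fixed} $k^*$, the coefficient $M_{k^*,3}(K)$ is a fixed finite number, so $\tau_1$ can be shrunk to make this $\le\varepsilon/3$. Choosing the $N_{k,i}$ then amounts to enforcing the constraints~\eqref{eq:LC1}--\eqref{eq:LC3} compatible with that small $\tau_1$. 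In short: replace Lipschitz by $\tfrac12$-Hölder, and replace the global telescoping by a finite-horizon recursion plus separate tail control; as written, your argument does not go through.
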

\noi
In the spirit of $h$-principle, one may interpret Theorem~\ref{thm:C1-density} as a holonomic approximation result between the formal solution $\Phi_\infty$ and the holonomic section $df_\infty$~\cite{em-ihp-02}.
For a given initial embedding $f_0$ and a sequence of metrics $(g_k)_k$ the limit map $\Phi_\infty$ is well defined for every choice of the corrugation numbers $(N_{k,i})_{k,i}$. The theorem implies that for large enough corrugation numbers, the corresponding $\Phi_\infty$ are realized by holonomic sections up to $\varepsilon$. A notable observation is that $\Phi_k$ only depends on $\Phi_0$, on the three linear forms $\ell_1,\ell_2,\ell_3$, on the corrugation numbers $N_{1,1},...,N_{k,3}$  and on the values of $g_1,\dots, g_k$ at the considered point. This is not the case for $df_k$; even if the corrugation process~\eqref{eq:CP_3-corrugated} is pointwise, the derivatives of $f_k$ involve the derivatives of $g_k$. Hence, studying $\Phi_k$ and its limit $\Phi_\infty$ greatly simplifies the understanding of the geometry of $f_k$ and its limit $f_\infty$. Our numerical experiments actually show a remarkable similarity between $\Phi_k$ and $df_k$. See Figure~\ref{fig:regularite-rho}, where $K$ is a circle $\{(\rho,\varphi)\mid \rho = \mathrm{const}\}$.
\begin{figure}[h t]
    \centering
    \captionsetup{width=.9\linewidth}
	\includegraphics[width = 1\textwidth]{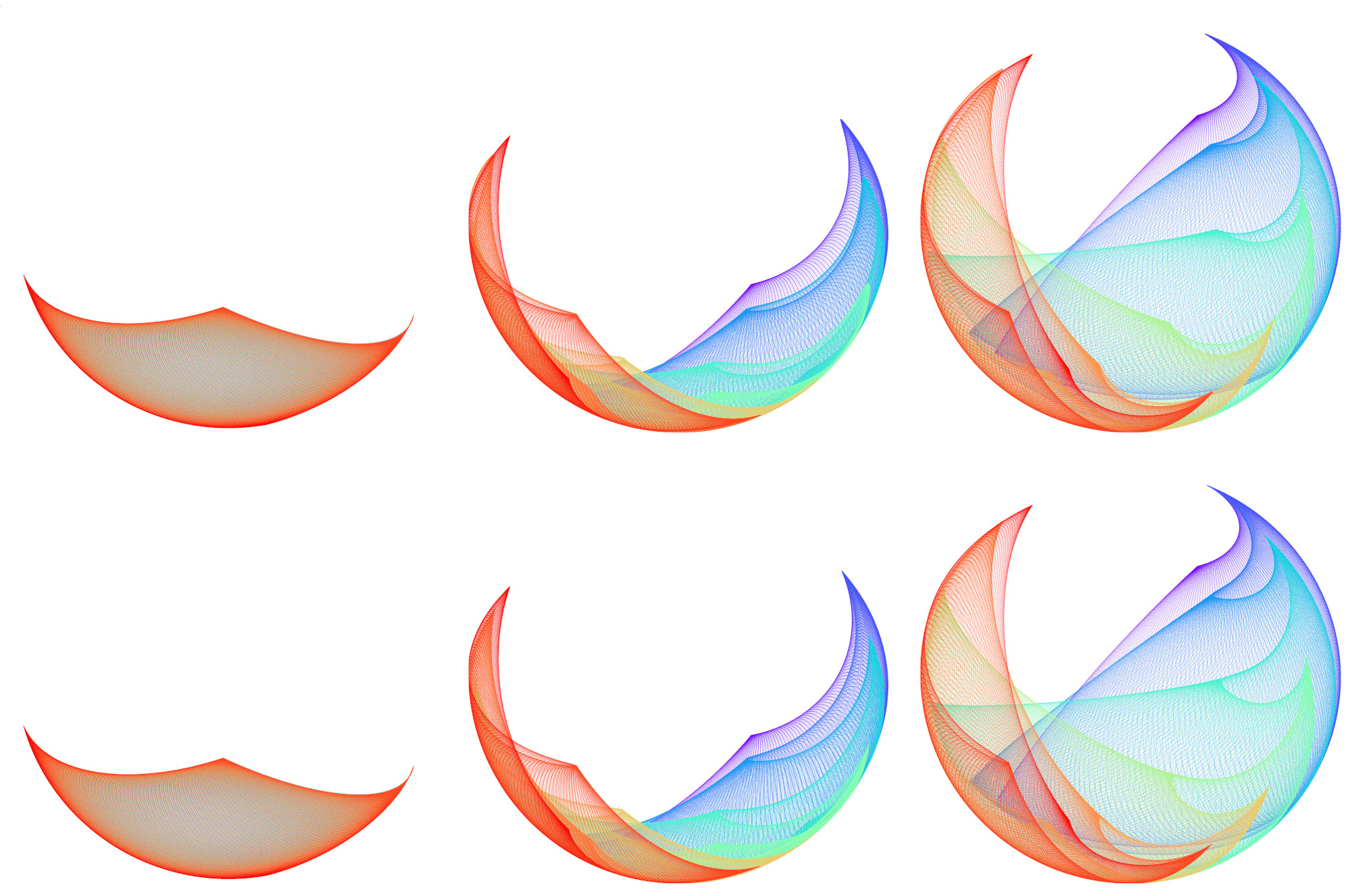}
	\caption{These pictures show the images into the 2-sphere of small circle arcs by the normal maps of the embedding $f_{2,2}$ and of its formal analogue $\Phi_{2,2}$. 
Top row, images by the normal map $\mathbf{n}_{2,2}$ of the embedding $f_{2,2}$ of arcs of amplitude $\frac{\pi}{700}$~of the circles $\{\rho=0.5\}$ (left), $\{\rho=0.7\}$ (center) and $\{\rho=0.9\}$ (right). Bottom row, images of the same arcs by the normal map $\mathbf{n}^{\Phi}_{2,2}$ of the formal analogue $\Phi_{2,2}$. The arcs are colored from red to blue. As their images are highly self-intersecting, the colors blend together.}
\label{fig:regularite-rho}	
\end{figure}

\paragraph{Formal normal map.} The formal analogue $\Phi_{\infty}$ gives a key to understand the behavior of the normal map $\mathbf{n}_{\infty}$ of~$f_{\infty}$. At each point $p=(\rho,\varphi)\in\od$, the formal analogue defines an oriented plane $Im\,(\Phi_{\infty}(p))$ and thus a unit \emph{formal normal}~$\mathbf{n}^\Phi_{\infty}(p)$. It follows by Theorem~\ref{thm:C1-density} that $\mathbf{n}^\Phi_{\infty}$ can be arbitrarily close to $\mathbf{n}_{\infty}$ if the corrugation numbers are conveniently chosen. We thus mainly focus on~$\mathbf{n}^\Phi_{\infty}.$ An obvious observation is that~$\mathbf{n}^\Phi_{\infty}$ is not $SO(2)$-rotationally symmetric despite the fact that the initial application has rotational symmetry and that all the metrics $g_k$ depend only on $\rho$. This is due to the accumulation of corrugations in three different directions which has the effect of destroying any rotational symmetry. However
the destruction of symmetry is not total: a rotational symmetry of finite order persists for 
$\mathbf{n}^\Phi_{\infty}$.
To understand the origin of these symmetries, it is convenient to introduce a variant of the normal map. To do so, we
complete the normal~$\mathbf{n}_{\infty}$ to an orthonormal basis $\mathbf{F}_{\infty}=(\mathbf{t}_{\infty},\mathbf{w}_{\infty},\mathbf{n}_{\infty})$ of $\E^3$. Here, $\mathbf{w}_{\infty}$ is the normalized derivative of $f_{\infty}$ in the direction $\ker \ell_1$ with the induced orientation of the oriented half-plane $\{ \ell_1 > 0\}$, and $\mathbf{t}_{\infty}$ is chosen to form a direct orthonormal basis. 
Similarly, we define $\mathbf{F}^{\Phi}_{\infty}$ and $\mathbf{F}_{0}$ by considering $\Phi_{\infty}$ and $df_0$ instead of~$df_{\infty}$. 
We now define the \emph{pattern maps} $\boldsymbol{\nu}_{\infty}$ and  $\boldsymbol{\nu}^\Phi_{\infty}$ as the coordinates in $\R^3$ of $\mathbf{n}_{\infty}$ and $\mathbf{n}^\Phi_{\infty}$ in the basis $\mathbf{F}_0$, so that 
\begin{equation}
\label{eq:normal-normal_pattern}
\mathbf{n}_{\infty}=  \mathbf{F}_{0} \cdot \boldsymbol{\nu}_{\infty}
\quad\mbox{and}\quad
\mathbf{n}^\Phi_{\infty}=  \mathbf{F}_{0} \cdot \boldsymbol{\nu}^{\Phi}_{\infty}.
\end{equation}
Obviously, the behavior of these pattern maps depend on the corrugation sequence $N_*=(N_{k,i})_{k,i}$ and more specifically on the greatest common divisor of the subsequences $(N_{k,2})_{k\in\N^*}$ and $(N_{k,3})_{k\in\N^*}$ of $N_*$, which we denote by
\[\Omega=\Omega[N_*].
  \]
We show in Lemma~\ref{lem:periodicity_formal_pattern} that the formal pattern map
\[\varphi\longmapsto\boldsymbol{\nu}^{\Phi}_{\infty}[N_*]\left(\rho,\varphi\right) \]
is $\frac{2\pi}{7\Omega}$-periodic. Here we have used the notation $[N_*]$ to  emphasize the dependency on the corrugation numbers. 
Since $\mathbf{F}_{0}$ has rotational symmetry, the following proposition follows:
\begin{prop}
  \label{prop:formal-normal-periodic}
 The formal normal
$\mathbf{n}_{\infty}^{\Phi}$ has rotational symmetry of order $7\Omega$.
\end{prop}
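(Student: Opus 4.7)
The plan is to deduce the proposition almost immediately from two ingredients: the periodicity statement of Lemma~\ref{lem:invariance_absolute_normal}, which is announced just above, and the $SO(2)$-equivariance of the initial frame $\mathbf{F}_0$. Indeed, formula~\eqref{eq:normal-normal_pattern} factors the formal normal as
\[
\mathbf{n}_{\infty}^{\Phi}(\rho,\varphi) = \mathbf{F}_{0}(\rho,\varphi)\cdot\boldsymbol{\nu}_{\infty}^{\Phi}(\rho,\varphi),
\]
splitting its $\varphi$-dependence into a ``universal'' factor that carries the full rotational invariance of $f_0$ and a factor that absorbs all the symmetry-breaking caused by the corrugations. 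Once one knows, by Lemma~\ref{lem:invariance_absolute_normal}, that the second factor is $\frac{2\pi}{7L}$-periodic in $\varphi$, the first factor should turn this periodicity into the announced rotational symmetry of $\mathbf{n}_{\infty}^{\Phi}$.

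First I would make the equivariance of $\mathbf{F}_0$ precise. Because $f_0$ is chosen with rotational symmetry around a fixed axis of $\E^3$, there exists a one-parameter group of rotations $(R_\theta)_{\theta\in\R}$ around that axis such that $f_0(\rho,\varphi+\theta)=R_\theta\,f_0(\rho,\varphi)$ in polar coordinates. Differentiating this identity in the $\rho$ and $\varphi$ directions and normalizing shows that both $\mathbf{t}_0$ and $\mathbf{n}_0$, and hence also $\mathbf{w}_0=\mathbf{n}_0\times\mathbf{t}_0$, are sent to their $R_\theta$-rotated counterparts. Written columnwise as a matrix, this becomes the equivariance relation
\[
\mathbf{F}_{0}(\rho,\varphi+\theta) \;=\; R_\theta\cdot\mathbf{F}_{0}(\rho,\varphi), \qquad \theta\in\R.
\]
Specialising to $\theta=\frac{2\pi}{7L}$ and combining with Lemma~\ref{lem:invariance_absolute_normal} in the second formula of~\eqref{eq:normal-normal_pattern} gives
\[
\mathbf{n}_{\infty}^{\Phi}\!\bigl(\rho,\varphi+\tfrac{2\pi}{7L}\bigr)
= \mathbf{F}_{0}\!\bigl(\rho,\varphi+\tfrac{2\pi}{7L}\bigr)\cdot\boldsymbol{\nu}_{\infty}^{\Phi}\!\bigl(\rho,\varphi+\tfrac{2\pi}{7L}\bigr)
= R_{2\pi/7L}\cdot\mathbf{n}_{\infty}^{\Phi}(\rho,\varphi),
\]
which is exactly the claim: a $\varphi$-shift by $\frac{2\pi}{7L}$ is realised in the target by the rigid rotation $R_{2\pi/7L}$, so iterating $7L$ times produces the identity and $\mathbf{n}_{\infty}^{\Phi}$ inherits a cyclic rotational symmetry of order $7L$.

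The main obstacle is not this deduction, which is essentially mechanical, but the upstream Lemma~\ref{lem:invariance_absolute_normal}: isolating the exact factor $7L$ requires tracking how each of the three fixed linear forms $\ell_1,\ell_2,\ell_3$ enters the formal corrugation process~\eqref{eq:def-L_ki}, and showing that the trigonometric phase factors contributed by the $N_{k,i}$ combine into a period $\frac{2\pi}{7L}$ — the $L$ encoding the gcd of the directions indexed by $i=2,3$, while the denominator $7$ comes from the angular content of the fixed forms. Once that periodicity is in hand, the rotational symmetry of the formal normal follows at once from the factorisation above.
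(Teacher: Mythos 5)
Your proof is correct and follows exactly the route the paper takes: the proposition is stated there as an immediate consequence of Lemma~\ref{lem:invariance_absolute_normal} (the $\frac{2\pi}{7L}$-periodicity of $\boldsymbol{\nu}_{\infty}^{\Phi}$) together with the $SO(2)$-equivariance of $\mathbf{F}_0$, and your displayed computation $\mathbf{n}_{\infty}^{\Phi}(\rho,\varphi+\frac{2\pi}{7L})=R_{2\pi/7L}\cdot\mathbf{n}_{\infty}^{\Phi}(\rho,\varphi)$ is precisely the argument the authors leave implicit (the equivariance identity you assert is the one proved explicitly in Lemma~\ref{lem:gauss_map_absolute_normal}).
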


\paragraph{Self-similarity behavior of $\mathbf{n}^\Phi_{\infty}$.}  As stated in Theorem~\ref{thm:C1-density}, the difference between the formal normal $\mathbf{n}^{\Phi}_\infty$ and the Gauss map $\mathbf{n}_\infty$  of $f_\infty$ can be made arbitrarily small, see Figure~\ref{fig:regularite-rho}. This motivates the study of the formal normal. We exhibit below a self-similarity behavior of $\mathbf{n}^{\Phi}_\infty$ along the circle $\Gamma=\{\rho=\mathrm{const}\}$:  the image  $\mathbf{n}^{\Phi}_{\infty}(\Gamma)$ approximately decomposes into a finite union of rotated copies of $\mathbf{n}_{\infty}^{\Phi}(\Gamma_1^0)$, where $\Gamma_1^0$ is the arc $\{(\rho,\varphi)\,|\, 0\leq\varphi\leq\frac{2\pi}{7\Omega}\}$. This decomposition occurs at finer scales as stated in Proposition~\ref{prop:fractal-normal}  below.
For any integer $\ell\geq1$, we introduce similarly as above
\begin{itemize}
\item  $\mathbf{F}^{\Phi}_{\ell}$, the frame $(\mathbf{t}^{\Phi}_{\ell},\mathbf{w}^\Phi_{\ell},\mathbf{n}^{\Phi}_{\ell})$ obtained by considering $\Phi_{\ell,3}$ instead of $df_\infty$, 
  \item $\Omega_\ell=\Omega_\ell[N_*]$ the greatest common divisor of the subsequences $(N_{k,2})_{k\geq \ell}$ and $(N_{k,3})_{k\geq \ell}$. 
\end{itemize}
Observe that  $\Omega_1=\Omega$. For any $j\in\{0,\cdots,7\Omega_\ell-1\}$, we also introduce the following notations.
\begin{itemize}
\item $\mathrm{Lip}_\rho(\mathbf{F}_\ell^{\Phi})$ for the Lipschitz constant of $\varphi\mapsto\mathbf{F}^\Phi_{\ell}(\rho,\varphi) $,
\item $\mathrm{rot}_\ell^j$ for the rotation mapping $\mathbf{F}_{\ell-1}^{\Phi}(\rho,0)$ to $\mathbf{F}_{\ell-1}^{\Phi}(\rho,\frac{2j\pi}{7\Omega_{\ell}})$, and
  \item $\Gamma_\ell^j$  for the arc $\{(\rho,\varphi)\,|\, \frac{2j\pi}{7\Omega_\ell}\leq\varphi\leq\frac{2(j+1)\pi}{7\Omega_\ell}\}$.
\end{itemize}
\begin{prop}
\label{prop:fractal-normal}
Let $\rho\in\, ]0,1[$, $\ell\geq 1$, $j\in\{0,\cdots,7\Omega_{\ell}-1\}$. Denoting by $\d_{Hauss}$ the Hausdorff distance, we have
\begin{equation}
    \label{eq:dist_hausdorff}
  \d_{Hauss}\left( 
\mathbf{n}^{\Phi}_{\infty}(\Gamma^j_{\ell}) ,\mathrm{rot}_{\ell}^j \circ \mathbf{n}^{\Phi}_{\infty}(\Gamma_{\ell}^0)
\right) 
\leq \frac{4\pi \mathrm{Lip}_\rho(\mathbf{F}_{\ell-1}^{\Phi})}{7\Omega_{\ell}}.  
\end{equation}
In particular, we have 
\[
\d_{Hauss}\left( 
\mathbf{n}^{\Phi}_{\infty}(\Gamma) ,\bigcup_{j=0}^{7\Omega_{\ell}-1}\mathrm{rot}_{\ell}^j \circ \mathbf{n}^{\Phi}_{\infty}(\Gamma_{\ell}^0)
\right) 
\leq \frac{4\pi \mathrm{Lip}_\rho(\mathbf{F}_{\ell-1}^{\Phi})}{7\Omega_{\ell}}.
\]
\end{prop}

\noi
The upper bound of Inequation~\eqref{eq:dist_hausdorff} can be made arbitrarily small by a convenient choice of the sequence $(N_{k,i})_{k,i}.$ Indeed, the Lipschitz constant $\mathrm{Lip}_\rho(\mathbf{F}_{\ell-1}^{\Phi})$ depends on the corrugation numbers $N_{k,i}$ with $k\leq \ell-1$ while $\Omega_{\ell}$ only depends on $N_{k,2}$ and $N_{k,3}$ with $k\geq \ell.$ 
The appearance of the number~7 in the denominator of Inequation~\eqref{eq:dist_hausdorff}  results from a choice in the construction of $\Phi_{\infty}$ and has no special meaning, see Lemma~\ref{lem:parameter-a}.
Corollary~\ref{coro:fractal-normal2} below shows that this decomposition applies recursively. Namely, the image $\mathbf{n}^{\Phi}_{\infty}(\Gamma^j_\ell)$  approximately decomposes  into a union of $\Omega_{\ell+1}/\Omega_\ell$ copies of  $\mathbf{n}^{\Phi}_{\infty}(\Gamma_{\ell+1}^0)$.
\begin{coro}
\label{coro:fractal-normal2}
Let $\ell \geq 1$ and  $j\in\{0,\cdots,7\Omega_\ell-1\}$, we have
\[
\d_{Hauss}\left( 
\mathbf{n}^{\Phi}_{\infty}(\Gamma^j_\ell)\ ,\bigcup_{j'=j\frac{\Omega_{\ell+1}}{\Omega_\ell}}^{(j+1)\frac{\Omega_{\ell+1}}{\Omega_\ell}-1}
\mathrm{rot}_{\ell+1}^{j'} \circ \mathbf{n}^{\Phi}_{\infty}(\Gamma_{\ell+1}^0)
\right) 
\leq \frac{4\pi \mathrm{Lip}_\rho(\mathbf{F}_{\ell}^{\Phi})}{7\Omega_{\ell+1}}.
\]
\end{coro}

\begin{figure}[H] 
    \centering
    \captionsetup{width=.9\linewidth}
	\includegraphics[width = 0.8\textwidth]{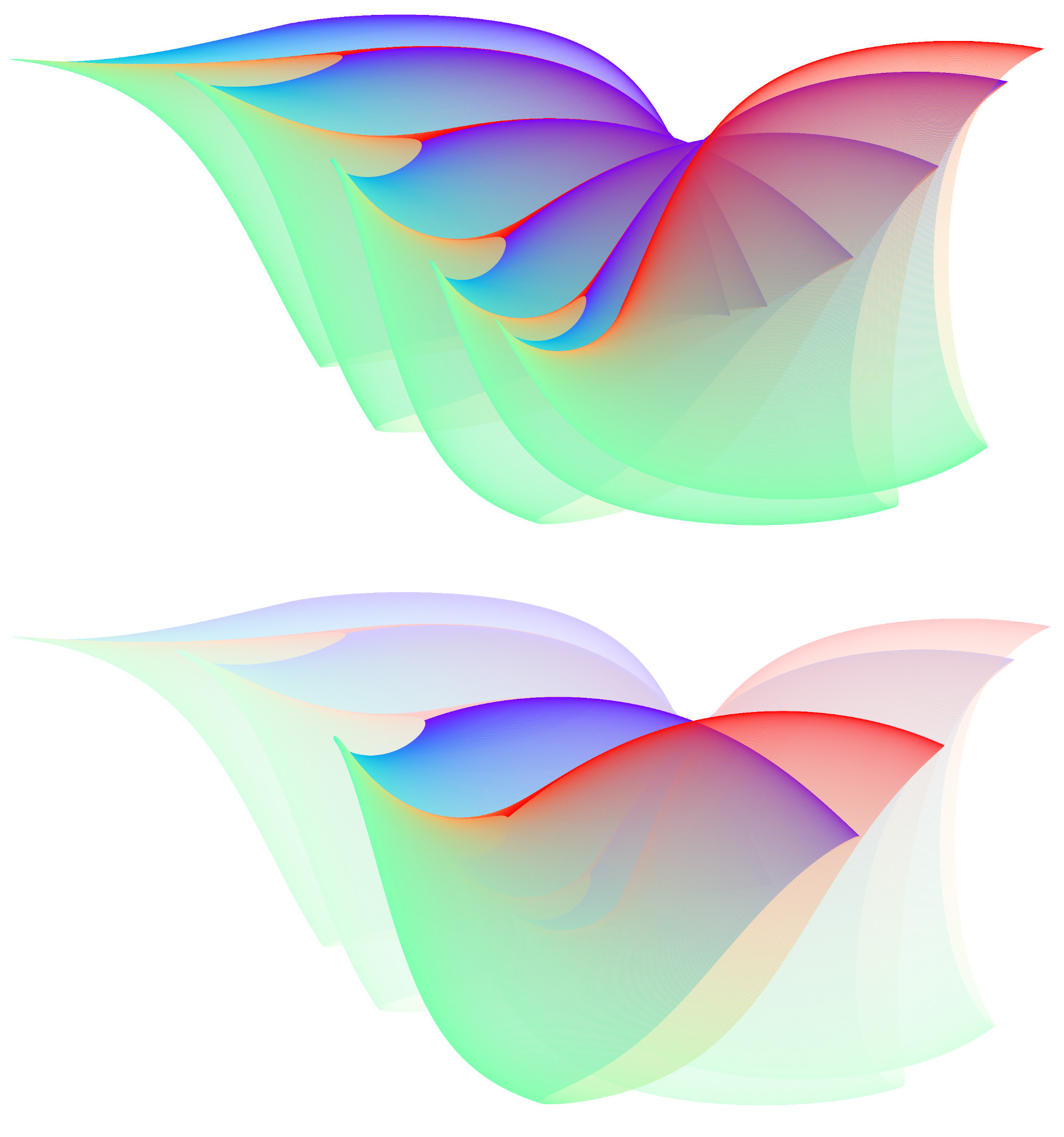}
	\caption{For any $j$, the image by $\mathbf{n}^{\Phi}_{2,2}$ of the arc $\Gamma_{1,2}^j$ of amplitude $\frac{2\pi}{7\Omega_{1,2}}=\frac{2\pi}{7\Omega}$ is made of $\Omega_{1,3}/\Omega_{1,2}=10$ approximated copies of  $\mathbf{n}^{\Phi}_{2,2}(\Gamma_{1,3}^0)$. (Refer to  Section~\ref{sec:similarity-scale} for the notations.) For the readability of the figure, only the image of half $\Gamma_{1,2}^0$,  an arc of amplitude $\frac{\pi}{7\Omega}$, with 5 copies is shown. Indeed, the image of the other half almost overlays the first half, giving the illusion of a single image traveled back and forth. Here $\rho=0.99$ and each copy, colored from red to blue, is represented in a domain of a spherical coordinate system. In the bottom figure, only one copy is highlighted while the other four are faded out.
	}
\label{fig:subpatterns}	
\end{figure}

\noi
In Figures~\ref{fig:regularite-rho}, \ref{fig:subpatterns} and \ref{fig:subsubpatterns}, we visualize $\mathbf{n}^{\Phi}_{2,2}$ and $\mathbf{n}^{\Phi}_{2,3}$ instead of $\mathbf{n}^{\Phi}_\infty$ for obvious numerical limitations. However, we show in Section~\ref{sec:similarity-scale} (Proposition~\ref{prop:similarity-any-scale}) that $\mathbf{n}_{k,i}^{\Phi}$ also exhibits a self-similarity structure in the sense that
the image by $\mathbf{n}_{k,i}^{\Phi}$ of each subarc of some regular subdivisions of $\Gamma$ is approximately made of rotated copies of the image of an even smaller arc. 

\begin{figure}[H] 
    \centering
    \captionsetup{width=.9\linewidth}
	\includegraphics[width = 0.8\textwidth]{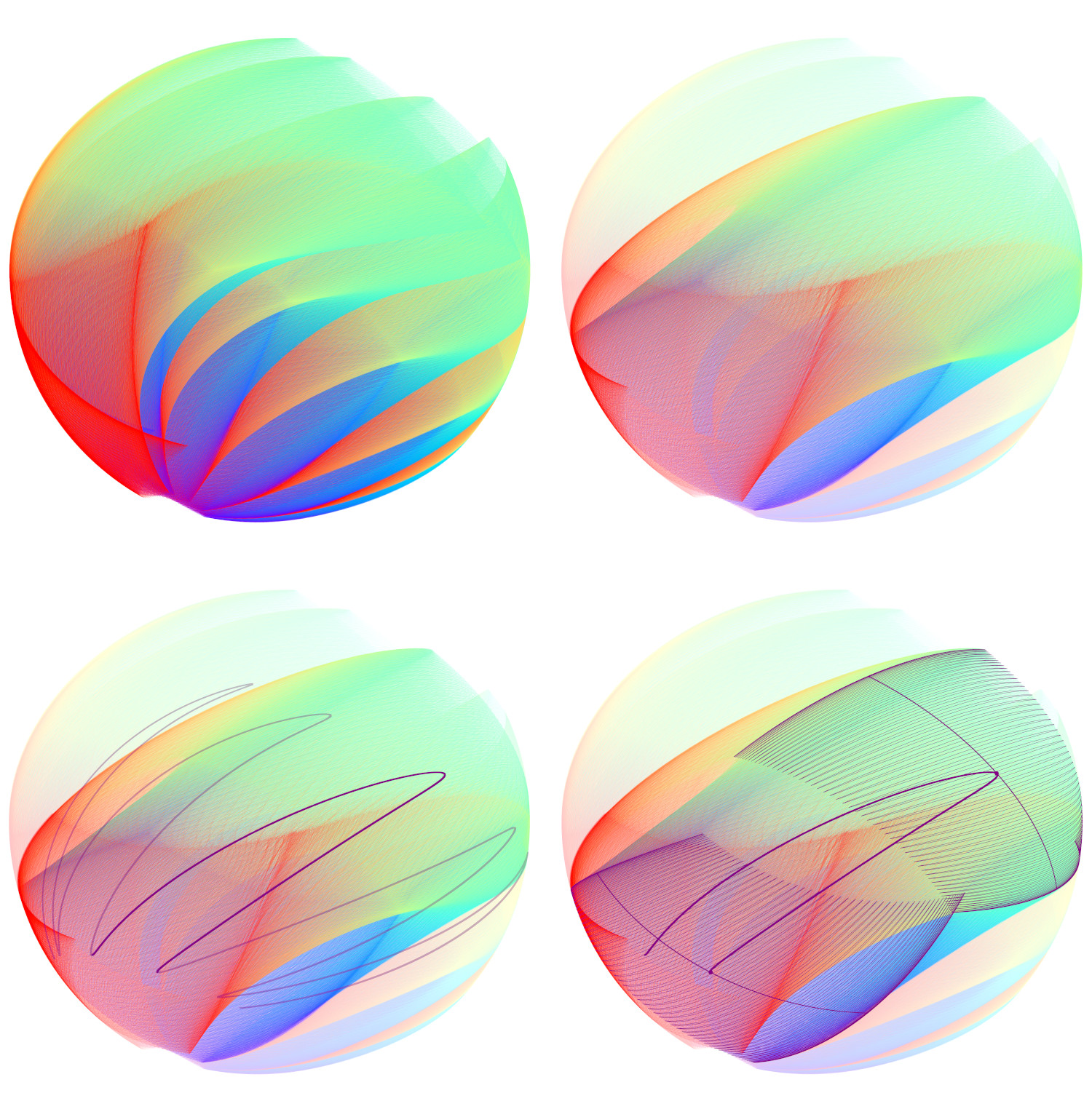}
	\caption{Self-similarity structure of the formal normal along the circle of radius $\rho=0.7$. 
\textbf{Top left}: image by $\mathbf{n}^{\Phi}_{2,3}$ of the first half $\Gamma_{1,2}^0$, an arc of amplitude $\frac{2\pi}{14\Omega_{1,2}} = \frac{\pi}{700}$. \textbf{Top right}: only the image by $\mathbf{n}^{\Phi}_{2,3}$ of $\Gamma_{1,3}^{3}\subset \Gamma_{1,2}^0$ is highlighted.
	\textbf{Bottom left}: An overlay is added in purple that corresponds to the image by $\mathbf{n}^{\Phi}_{2,1}$ of the first half of $\Gamma_{1,2}^0$. The thick part of this purple curve is the image  of $\Gamma_{1,3}^{3}$.
	\textbf{Bottom right}: Two overlays are finally added, also in purple. They correspond to the images by $\mathbf{n}^{\Phi}_{2,3}$ of  the arcs $\Gamma_{2,1}^{0}$ and $\Gamma_{2,1}^{1000}$ of amplitude $\frac{2\pi}{7\Omega_{2,1}} = \frac{\pi}{\num{7000000}}$. The image $\mathbf{n}_{2,3}(\Gamma_{1,3}^{3})$ is approximately composed of $\Omega_{2,1}/\Omega_{1,3}=2000$ rotated copies of  $\mathbf{n}^{\Phi}_{2,3}(\Gamma_{2,1}^0)$ distributed along the thick curve $\mathbf{n}^{\Phi}_{2,1}(\Gamma_{1,3}^{3})$. }
\label{fig:subsubpatterns}	
\end{figure}
\noi
\paragraph{Regularity of $\mathbf{n}^\Phi_{\infty}$.} We then focus on the regularity of $\mathbf{n}^\Phi_{\infty}$. Proposition~\ref{cor:regularity-n} states that this regularity is the same at every point of a circle centered at the origin. Now, if the normal~$\mathbf{n}^\Phi_{\infty}$ were of class $C^1$, we could extract the extrinsic curvatures from the shape operator $-{\rm d} \mathbf{n}^\Phi_{\infty}$.  Here, the normal $\mathbf{n}^\Phi_{\infty}$ is not $C^1$ but we may consider the modulus of continuity of $\mathbf{n}^\Phi_{\infty}$.  Proposition~\ref{prop:formal-Weierstrass} states that this modulus, when
restricted to the circles $\{\rho=Cte\}$, is related to the modulus of a Weierstrass-like function:
\[W_{\rho}(\varphi):=\sum_{k=1}^{\infty}\left( \sum_{i=1}^3\alpha_{k,i}^{\Phi}(\rho)\cos(b_{k,i}\varphi+c_{k,i}(\rho))\right).
\]
In this expression, the coefficient $\alpha_{k,i}^{\Phi}(\rho)$ can be made explicite, see  Lemma~\ref{lem:alpha-beta}, and has polynomial growth in $\rho^{k}$.
We thus expect that the regularity of 
\[\varphi\longmapsto \mathbf{n}^\Phi_{\infty}(\rho,\varphi)\]
decreases as $\rho$ tends towards 1. We have observed this phenomenon numerically, as illustrated in Figure~\ref{fig:regularite-rho}.

\paragraph{Asymptotic behavior.}
In Theorem~\ref{thm:C1-density}, the approximation of $df_\infty$ by its formal analogue $\Phi_\infty$ depends on the sequence $N_*$ of corrugation numbers:
\[\|\Phi_{\infty}[N_*]-df_{\infty}[N_*]\|_{K,\infty}\leq \varepsilon.
\]
Since $N_*$ itself depends on $\varepsilon$, this prevents us from transferring geometric properties of a given $\Phi_{\infty}[N_*]$ to $df_{\infty}[N_*]$ with arbitrary precision.
However, this arbitrary precision can be achieved over the circles of rational radius $\Gamma=\{\rho=\frac{p}{q}\}$. In Theorem~\ref{thm:image_absolute_normal_circle}, we exhibit a sequence of isometric embeddings whose normal patterns converge over those circles towards the formal normal of $\Phi_{\infty}[N_*]$ for any given $N_*$. The key step is to replace the sequence $N_*$ by its multiple
\[(1+q)N_*:=((1+q)N_{k,i})_{k,i}.
\]
This has the effect of composing $\boldsymbol{\nu}_{\infty}^\Phi[N_*]$ by the reparametrization $(\rho,\varphi)\mapsto (\rho,(1+q)\varphi).$ As a consequence,  the images  $\boldsymbol{\nu}_{\infty}^{\Phi}[N_*](\Gamma)$ and $\boldsymbol{\nu}_{\infty}^{\Phi}[(1+q)N_*](\Gamma)$ are equal, see Lemma~\ref{lem:invariance_absolute_normal}. This crucial fact, combined with Theorem~\ref{thm:C1-density}, leads to an asymptotic connection between the images
of $\Gamma$ by the formal pattern  maps $\boldsymbol{\nu}_{\infty}^{\Phi}[N_*]$ and $\boldsymbol{\nu}_{\infty}[(1+n!)N_*]$ for any $n\geq q$. 

\begin{thm}[Asymptotic behavior]
    \label{thm:image_absolute_normal_circle}
Let $\Gamma$ be the circle 
$\{\rho=\rho_0\}$ with $\rho_0\in \Q\,\cap\,]0,1[$. We have
\[
\lim_{n\to +\infty}\boldsymbol{\nu}_{\infty}[(1+n!)N_*](\Gamma)=\boldsymbol{\nu}^\Phi_{\infty}[N_*](\Gamma).
\]
\end{thm}

\noi
In this theorem, it is understood that the limit applies to those elements $\boldsymbol{\nu}_{\infty}[(1+n!)N_*](\Gamma)$ that are well-defined, that is for which the sequences $(1+n!)N_*$ lead to a  well-defined map $f_\infty$.
Whether Theorem~\ref{thm:image_absolute_normal_circle} can be extended to  circles with non-rational radii remains open. 
Theorem~\ref{thm:image_absolute_normal_circle} and its proof suggest that the arithmetic properties of the sequence of corrugation numbers plays a role in the asymptotic geometry of the normal map.\\

The approach followed here is in a way the reverse of the usual one which consists of building a holonomic section starting from a formal solution. In parallel to the holonomic solution, we build a formal analogue whose geometry is controlled and which allows to infer properties on the holonomic solutions. This approach could certainly be applied to any isometric embeddings built by convex integration. 

\section{The general strategy}
Here, we introduce the necessary ingredients of our construction of an isometric embedding of the hyperbolic plane. Our construction relies on a sequence of metrics $(g_k)_k$ converging to the hyperbolic metric, from which we build a sequence of maps $(f_k)_k$ converging to an isometric embedding. The $C^1$ convergence of $(f_k)_k$ can be reduced to the validity of four properties~\ref{it:P1}--\ref{it:P4} involving $(f_k)_k$ and $(g_k)_k$. We prove in Section~\ref{sec:proof-thm-1} that these properties can be fulfilled. Assuming~\ref{it:P1}--\ref{it:P4}, we show in Section~\ref{subsec:Holder} that our construction has maximum Hölder regularity on the limit set.

\subsection{Working on $\overline{\H^2}$}
\label{subsec:coordinate-system}
We use polar coordinates on $D^2$ and therefore introduce the cylinder  
$$ C_0 :=\{(\rho,\varphi) \, |\, \rho\in ]0,1],\;\varphi\in\R/(2\pi\Z)\}$$
and its universal covering $\widetilde{C}_0=\,]0,1]\,\times \R$. We orient $C_0$ and $\widetilde{C}_0$ by requiring $(\partial_{\rho},\partial_{\varphi})$ to be direct and we endow $Int\, C_0$ with the metric
\begin{eqnarray*}
    h := 4 \frac{ d\rho^2 + \rho^2 d\varphi^2 }{ (1-\rho^2)^2 }
\end{eqnarray*}
to obtain a Riemannian surface $(Int\, C_0,h)$  isometric to the punctured Poincaré disk $\H^2\setminus \{O\}.$ From an initial immersion/embedding $f_0$ defined on~$C_0$, we will iteratively apply a corrugation process to produce a sequence of immersions/embedings $(f_q)_q$ that will be $C^0$-converging toward a limit map $f_{\infty}$ defined on $C_0$. Moreover this sequence will be $C^1$-converging on the interior of $C_0$ and the limit map $f_{\infty}$ will be isometric between $(Int\,C_0, h)$ and $\H^2\setminus\{O\}$.\\

\noi
The extension at the origin of the constructed maps $(f_q)_q$ can be realized by modifying iteratively a sequence of lifts $(\widetilde{f}_q)_q$ on disks centered at the origin and of arbitrarily small radius of $\widetilde{C}_0$ (see \cite{Spring-book} p.199, Complement 9.28, for a general result, or \cite{FOCM} for an explicit construction of an extension from an equatorial ribbon to a whole 2-sphere). This part of the construction will be skipped here because it only perturbs $f_{\infty}$ on a compact domain of $Int\, C_0$. We therefore focus on constructing $f_{\infty}$ on the compact annulus
\begin{eqnarray*}
    C := C_0\setminus Int\,D(\rho_0)
\end{eqnarray*}
obtained by removing from $C_0$ an open disk centered at $O$ and of radius $\rho_0$ for some $0<\rho_0<1.$ Accordingly, we denote $\widetilde{C} := \,]\rho_0,1]\,\times \R$.

\subsection{A Nash \& Kuiper-like approach}
\label{subsec:sequence_metrics_maps}
We briefly summarize our approach that relies on the classical Nash and Kuiper construction of isometric maps, see~\cite{n-c1ii-54,k-oc1ii-55,g-pdr-86,Spring-book,ContiEtAl-2012,delellis,ls-fnkti-23}. In our context, it starts with a short initial embedding $f_0:C\to\E^3$ and a sequence of increasing metrics $(g_k)_{k\in\N}$ defined on~$C$ such that $g_0=\pullback{f_0}$ and $\lim_{k\to +\infty}g_k=h$. The hyperbolic metric  $h$ is not defined on $\{(1,\varphi)\}$ so this last limit is only required over $C^*:=C\setminus(\{1\}\times \R/2\pi\Z)$. Let $(\tau_k)_{k\in\N^*}$ be a decreasing sequence of positive numbers such that
\begin{eqnarray*} 
    \Tau:=\sum_{k=1}^{+\infty} \tau_k<+\infty.
\end{eqnarray*}
From the initial embedding $f_0$, we will build a sequence $(f_k)_{k\in\N^*}$  of maps defined on $C$  satisfying the following properties at every point $p\in C$:
\begin{enumerate}[label=($P_{\arabic*}$)]
\item\label{it:P1} $\|g_k -\pullback{f_k} \| \leq  \|g_{k+1}-g_{k}\|$ 
\item\label{it:P2} $\|f_{k}-f_{k-1}\| \leq \tau_k$
\item\label{it:P3} $\|df_{k}-df_{k-1}\|\leq \tau_k +{A}\, \|g_k-\pullback{f_{k-1}}\|^{1/2}$
\item\label{it:P4}  $\sum_k \|g_k-g_{k-1}\|_{K,\infty}^{1/2}<+\infty$ for any compact set $K\subset C^*$.
\end{enumerate}
In \ref{it:P3}, the factor ${A}$ is a constant that does not depend on $k$. Here and thereafter, we use the operator norm for linear maps and for a symmetric bilinear form $b$ we use the norm
\[
    \|b\| = \sup_{v\in\R^2}\frac{|b(v,v)|}{\|v\|^2}.
\]
We also denote by $\|\cdot \|_{C,\infty}$ the supremum norm taken over $C$.

\begin{prop}[Nash-Kuiper]\label{prop:Nash-Kuiper}
If the sequences $(f_k)_{k}$ and $(g_k)_{k}$ satisfy Properties \ref{it:P1} to \ref{it:P4} then $(f_k)_{k}$ converges  toward a  map $f_\infty$ which is continuous over $C$.  Moreover, $f_\infty$ is an $h$-isometric immersion of class $C^1$ over $ C^*$.
 \end{prop}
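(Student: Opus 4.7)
The plan is to establish $C^0$ convergence globally on $C$, $C^1$ convergence on every compact subset $K\subset C^*$, and then identify the pullback metric of the limit with $h$. The $C^0$ part is immediate: from \ref{it:P2}, the telescoping estimate $\|f_k-f_j\|_{C,\infty}\leq\sum_{i=j+1}^{k}\tau_i$ together with $\Tau=\sum_k\tau_k<+\infty$ makes $(f_k)_k$ Cauchy in $C^0(C,\E^3)$, so it converges uniformly on $C$ to some continuous map $f_\infty$.

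For the $C^1$ part, I would fix a compact $K\subset C^*$. Applying \ref{it:P1} at index $k-1$ gives $\|g_{k-1}-\pullback{f_{k-1}}\|\leq \|g_k-g_{k-1}\|$, so the triangle inequality yields $\|g_k-\pullback{f_{k-1}}\|\leq 2\|g_k-g_{k-1}\|$. Plugging this into \ref{it:P3} produces
\[
    \|df_k-df_{k-1}\|_{K,\infty}\leq \tau_k+A\sqrt{2}\,\|g_k-g_{k-1}\|_{K,\infty}^{1/2},
\]
which is summable in $k$ thanks to \ref{it:P4} and $\Tau<+\infty$. Hence $(df_k)_k$ is Cauchy in $C^0(K,\mathcal{L}(\R^2,\E^3))$ and converges uniformly on $K$. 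Combined with $f_k\to f_\infty$ in $C^0$, this forces $f_\infty|_K$ to be of class $C^1$ with $df_k\to df_\infty$ uniformly on $K$; letting $K$ exhaust $C^*$, one gets $f_\infty\in C^1(C^*,\E^3)$.

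To identify $\pullback{f_\infty}$ with $h$, I would observe that uniform $C^1$ convergence on $K$ forces $\pullback{f_k}\to\pullback{f_\infty}$ uniformly on $K$, while \ref{it:P4} implies $\|g_{k+1}-g_k\|_{K,\infty}\to 0$, so \ref{it:P1} yields $\pullback{f_k}-g_k\to 0$ uniformly on $K$; together with $g_k\to h$ on $K\subset C^*$, this gives $\pullback{f_k}\to h$ uniformly on $K$. Uniqueness of limits gives $\pullback{f_\infty}=h$ on $C^*$, and positivity of $h$ makes $df_\infty$ injective, so $f_\infty$ is an immersion. The one delicate point is localization: hypothesis \ref{it:P4} and the convergence $g_k\to h$ are valid only on compacta of $C^*$, not up to $\{\rho=1\}$ where $h$ diverges, which is exactly why the $C^1$ and isometric conclusions are confined to $C^*$, whereas the globally summable bound \ref{it:P2} still produces a uniformly continuous extension of $f_\infty$ to the whole annulus $C$. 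Everything else amounts to Cauchy-sequence bookkeeping and a standard exchange of limits.
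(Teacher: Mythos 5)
Your proof is correct and follows essentially the same structure as the paper's: $C^0$ convergence on $C$ from \ref{it:P2}, $C^1$ convergence on compacta of $C^*$ by combining \ref{it:P1} with \ref{it:P3} and the summability \ref{it:P4}, and finally $h$-isometry from \ref{it:P1}. The only cosmetic difference is that you apply the triangle inequality to the norm $\|g_k-\pullback{f_{k-1}}\|$ directly (yielding the constant $\sqrt{2}$), whereas the paper applies it after taking square roots (yielding $2$); both are valid, and you are in fact slightly more explicit than the paper about why $\pullback{f_\infty}=h$ and why $f_\infty$ is an immersion.
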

\begin{proof}
We provide the proof for the sake of completeness. 
Property $(P_2)$ ensures the $C^0$ convergence over $C$ of the sequence $(f_k)_k$ toward a continuous map $f_{\infty}$ such that
$\|f_{\infty}-f_0\|_{C,\infty}\leq \Tau$. 
Properties $(P_1)$ leads to the inequality
\[\begin{array}{lll}
    \|g_k-\pullback{f_{k-1}}\|^{1/2} & \leq &  \|g_k-g_{k-1}\|^{1/2}+\|g_{k-1} -\pullback{f_{k-1}}\|^{1/2}\\
    & \leq  & 2\|g_k-g_{k-1}\|^{1/2}.
\end{array}\]
The convergence of the series in $(P_4)$ thus implies the convergence of the series $\sum_k \|g_k -\pullback{f_{k-1}} \|^{1/2}_{K,\infty}$. Together with the convergence of $\sum_k \tau_k$, this implies by $(P_3)$ that $\sum_k \|df_{k}-df_{k-1}\|_{K,\infty}$ is convergent for every compact $K\subset C^*$. It follows that $f_\infty$ is $C^1$ over $C^*$. Then property $(P_1)$ ensures that $f_{\infty}$ is $h$-isometric over $ C^*$.
\end{proof}

\subsection{Hölder regularity}\label{subsec:Holder}
The limit map $f_{\infty}$ is $C^1$ everywhere except on the boundary $\partial D^2.$ At those points, its regularity can be controlled by the sequence of metrics $(g_k)_k$ together with the sequence $(\tau_k)_k$ introduced in~\ref{subsec:sequence_metrics_maps}.

\begin{prop}\label{prop:Holder-general} 
Let $0<\beta<1.$ Under the assumptions \ref{it:P1}-\ref{it:P4}, if
$$\begin{array}{ll}
    (i) & \mbox{there exists }k_0\in\N  \mbox{ such that, for all } k\geq k_0,\, \tau_k\leq  \|g_k-g_{k-1}\|^{1/2}_{C,\infty} \\
    (ii)  & \sum_k \tau_k^{1-\beta}\|g_{k}-g_{k-1}\|^{\beta/2}_{C,\infty}<+\infty
  \end{array}$$
then the limit map $f_{\infty}$ is $\beta$-Hölder on $C.$
\end{prop}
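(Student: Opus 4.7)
The strategy is the classical one for Hölder estimates on Nash--Kuiper limit maps: use the telescoping identity $f_\infty = f_0 + \sum_{k\geq 1}\Delta_k$ with $\Delta_k := f_k - f_{k-1}$, and estimate each increment $\Delta_k(p)-\Delta_k(q)$ by two competing bounds -- a ``sup'' bound of order $\tau_k$ and a ``Lipschitz'' bound of order $\|d\Delta_k\|\cdot d$ -- then interpolate them geometrically.

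First I would extract quantitative control of each increment. From \ref{it:P2} I directly get $\|\Delta_k\|_{C,\infty}\leq\tau_k$. For the derivative, \ref{it:P3} combined with the estimate $\|g_k-\pullback{f_{k-1}}\|^{1/2}\leq 2\|g_k-g_{k-1}\|^{1/2}$ already established in the proof of Proposition~\ref{prop:Nash-Kuiper} yields, with $B_k := \|g_k-g_{k-1}\|_{C,\infty}^{1/2}$,
\[
    \|d\Delta_k\|_{C,\infty} \leq \tau_k + 2A\, B_k.
\]
Hypothesis (i) then collapses this, for $k\geq k_0$, into $\|d\Delta_k\|_{C,\infty}\leq (1+2A)\,B_k$.

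Next, given $p,q\in C$ at Euclidean distance $d=\|p-q\|$, I use that the compact annulus $C$ admits, for any two of its points, a joining curve of length at most $c_0\,d$ (with $c_0$ depending only on $C$). This gives $|\Delta_k(p)-\Delta_k(q)|\leq c_0\,\|d\Delta_k\|_{C,\infty}\,d$. Combining the two available bounds, for $k\geq k_0$,
\[
    |\Delta_k(p)-\Delta_k(q)| \leq \min\!\bigl(\,2\tau_k,\ c_0(1+2A)\,B_k\,d\,\bigr).
\]
The key observation is the elementary interpolation $\min(a,b)\leq a^{1-\beta}b^{\beta}$ for $a,b\geq 0$ and $0<\beta<1$, which turns the previous estimate into
\[
    |\Delta_k(p)-\Delta_k(q)| \leq C_\beta\,\tau_k^{1-\beta}\,B_k^{\beta}\,d^{\beta},
\]
the constant $C_\beta$ absorbing $2^{1-\beta}(c_0(1+2A))^{\beta}$.

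Summing the telescoping series over $k\geq k_0$ produces a convergent bound by hypothesis (ii), while the finitely many terms with $k<k_0$ and the $f_0$ contribution are smooth on the compact set $C$, hence Lipschitz, and contribute at most $C_1\,d\leq C_1\,\mathrm{diam}(C)^{1-\beta}\,d^{\beta}$. Adding the two parts yields the desired estimate
\[
    |f_\infty(p)-f_\infty(q)|\leq \Bigl(C_\beta\sum_{k\geq k_0}\tau_k^{1-\beta}B_k^{\beta}+C_1\,\mathrm{diam}(C)^{1-\beta}\Bigr)\,d^{\beta}.
\]
The main conceptual point -- and the only ``obstacle'' -- is recognizing that hypotheses (i) and (ii) are tailored exactly so that the minimum of the two natural bounds, interpolated with exponent $\beta$, produces a summable series; everything else reduces to the two estimates already obtained from \ref{it:P1}--\ref{it:P3} and a standard telescoping argument.
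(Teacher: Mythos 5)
Your proof is correct and follows essentially the same strategy as the paper: bound $\|f_k-f_{k-1}\|_\infty$ by $\tau_k$ via \ref{it:P2}, bound $\|df_k-df_{k-1}\|_\infty$ via \ref{it:P3} and \ref{it:P1}, use hypothesis $(i)$ to absorb $\tau_k$ into the derivative bound, interpolate with exponent $\beta$, and sum using hypothesis $(ii)$. The only cosmetic difference is that the paper invokes the interpolation inequality $\|F\|_{C^{0,\beta}}\leq 2^{1-\beta}\|dF\|_\infty^\beta\|F\|_\infty^{1-\beta}$ as a black box and then appeals to completeness of the Hölder space, whereas you re-derive it from the elementary estimate $\min(a,b)\leq a^{1-\beta}b^\beta$ and explicitly insert a quasi-convexity constant $c_0$ for the annulus $C$ -- a detail the paper glosses over -- and you also track the factor $2A$ rather than $A$ coming from $\|g_k-\pullback{f_{k-1}}\|^{1/2}\leq 2\|g_k-g_{k-1}\|^{1/2}$, which is actually more accurate than what is written in the paper's proof.
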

\begin{proof}
The proof relies on the following classical interpolation inequality:
$$\|F\|_{C^{0,\beta}}\leq 2^{1-\beta} \|dF\|_{\infty}^{\beta}\|F\|_{\infty}^{1-\beta}$$
where $F$ is a $C^1$ map and $\|F\|_{C^{0,\beta}}=\sup_{x\neq y}\frac{|F(y)-F(x)|}{|y-x|^\beta}$ denotes the Hölder norm. From Properties $(P_1), (P_2)$ and $(P_3)$ stated in~\ref{subsec:sequence_metrics_maps} we have
\[
    \left\{\begin{array}{l}
    \|f_k-f_{k-1}\|_{C,\infty} \leq \tau_k\vspace*{1mm}\\
    \|df_k-df_{k-1}\|_{C,\infty}\leq \tau_k+{A}\|g_k-g_{k-1}\|^{1/2}_{C,\infty}.
    \end{array}\right.
\]
If the series $\sum \|g_k-g_{k-1}\|^{1/2}_{C,\infty}$ was convergent, then the previous inequalities would imply that the limit map $f_{\infty}$ exists and is $C^1$ on $C$.
Otherwise, the series $\sum \|g_k-g_{k-1}\|^{1/2}_{C,\infty}$ is divergent and it follows from Assumption $(i)$ of the lemma that for $k\geq k_0$ we have
\[
    \|df_k-df_{k-1}\|_{C,\infty}\leq (1+{A})\|g_k-g_{k-1}\|^{1/2}_{C,\infty}.
\]
The interpolation inequality leads to
\[
    \|f_k-f_{k-1}\|_{C^{0,\beta}}\leq 2^{1-\beta}(1+{A})^{\beta} \tau_k^{1-\beta}\|g_k-g_{k-1}\|^{\beta/2}_{C,\infty}.
\]
Thanks to Assumption $(ii)$ and the fact that $C^{0,\beta}(D^2)$ is a Banach space, we can deduce that the limit map $f_{\infty}$ is $\beta$-Hölder on $C.$
\end{proof}

\section{The corrugation process}\label{sec:corrugation process}

\subsection{The target differential}\label{subsec:target_differential}
Let $f:\widetilde{C} \mapsto \E^3$ be a smooth immersion. We fix an affine projection $\varpi:\widetilde{C}\to\R$ and a family of loops $\gamma:\widetilde{C}\times (\R/\Z)\to \E^3$ that is periodic in the angular coordinate, i.e. that satisfies
\[
    \gamma((\rho, \varphi+2\pi), s) = \gamma((\rho, \varphi), s).
\]
\begin{defn}[Corrugation process~\cite{MT}]\label{def:CP_general}
We denote by $F:\widetilde C\to\E^3$ the map defined by
\begin{eqnarray}
	\forall p\in\widetilde C,\qquad F(p) := f(p) + \frac{1}{N} \int_0^{N\varpi(p)} (\gamma(p,s)- \overline{\gamma}(p) )\d s, \label{eq:CP_general}
\end{eqnarray}
where $\overline{\gamma}(p):=\int_0^1\gamma(p,t)\d t$ denotes the average of the loop $\gamma(p,\cdot)$ and $N\in\N^*$ is any integer. We say that $F$ is obtained from $f$ by a \emph{corrugation process}.
\end{defn}
\noi
Observe that, for all $x\in\R,$
\begin{align}
    \int_x^{x+1} (\gamma(p,s)- \overline{\gamma}(p) )\d s =0. \label{eq:periodicity-int}
\end{align}
The differential of $F$ has the following expression
\begin{eqnarray}\label{eq:dF}
dF(p)=df(p)+\left(\gamma(p, N\varpi(p)) -\overline{\gamma}(p)\right)\otimes d\varpi+ \frac{1}{N} \int_0^{N\varpi(p)} d(\gamma(p,s)-\overline{\gamma}(p)  )\d s. 
\end{eqnarray}
The last term can be made arbitrarily small by increasing the number $N$. We therefore introduce a definition for the remaining terms. They appear to contain important geometric information. See Sections~\ref{sec:formal_process} and~\ref{sec:Gauss_map}.
\begin{defn}\label{def:target_differential}[Target differential]
We denote by $L$ the map given by
\begin{eqnarray*}
    L(p):=df(p)+\left(\gamma(p, N\varpi(p)) -\overline{\gamma}(p)\right)\otimes d\varpi
\end{eqnarray*}
that we call the \emph{target differential}.
\end{defn}
\noi
The defining formula~\eqref{eq:CP_general} for $F$ and the expression~\eqref{eq:dF} of its differential imply for all $p \in \widetilde{C}$:
\begin{itemize}\label{eq:properties_CP}
    \item[(i)] $\|F(p) - f(p)\| = O(1/N)$
    \item[(ii)] $\|dF(p) - L(p)\| = O(1/N)$.
\end{itemize}
Indeed, the integrand is 1-periodic and has vanishing average, see \eqref{eq:periodicity-int}. Thus, Formula~\eqref{eq:CP_general} allows to build a map $F$ that is arbitrarily close to $f$ and whose differential is the target differential $L$, up to $O(1/N)$.

\subsection{The corrugation frame}\label{subsec:corrugation-frame}
For each point $p\in \widetilde{C}$, the pair $(f,\varpi)$ defines a line
\begin{eqnarray*}
    W(p):= df(p)( \ker\,d\varpi)
\end{eqnarray*}
of the tangent space $df(p)(T\widetilde{C}).$ Note that $W$ is tangent to the corrugation wavefront $f(\{\varpi={\rm Cst}\})$. We denote by $W^{\perp}$ the orthogonal complement of $W$ in $df(p)(T\widetilde{C})$ and by $V=[V]^W+[V]^{W^{\perp}}$ the components of any tangent vector $V$ in the orthogonal direct sum $W\oplus W^{\perp}$. Let $(v,w)$ be a direct basis of $\R^2$ such that $d\varpi(p)(v)>0$ and $w\in \ker\,d\varpi(p)$.
We put
\begin{align}
    \mathbf{w}(p) &:= \dfrac{df(p)(w)}{\|df(p)(w)\|} \nonumber \\
    \mathbf{n}(p) &:=\frac{df(p)(v)\wedge df(p)(w)}{\|df(p)(v)\wedge df(p)(w)\|}\quad\mbox{and} \label{eq:def-t-n}\\
    \mathbf{t}(p)&:=\mathbf{w}(p)\wedge \mathbf{n}(p). \nonumber
\end{align}
Observe that $(\mathbf{t}(p),\mathbf{w}(p))$ is a direct orthonormal basis of $df(p)(T_p\widetilde{C})$ because $f$ is assumed to be an immersion, and that $\mathbf{n}(p)$ spans its normal space.
We call the orthonormal frame $\mathbf{F}=(\mathbf{t},\mathbf{w},\mathbf{n})$ the \emph{corrugation frame}. It only depends on the pair $(f,\varpi)$.\\

\noi
We also introduce a vector $u$, depending on $p$, to be such that $df(p)(u)$ is collinear to $\mathbf{t}(p)$ and $d\varpi(u)=1.$ This choice allows a simpler writing of the various formulas which occur in this article. Remark that 
\begin{align}
    [df]^{W^\perp} = df(u)\otimes d\varpi. \label{eq:df_P_perp}
\end{align}

\subsection{The corrugation process on $\widetilde{C}$}
We choose the family of loops $\gamma:\widetilde{C}\times (\R/\Z)\to \R^3$ defined by
\begin{eqnarray}\label{eq:def_gamma}
    \gamma(\cdot,s):= \di  r\left(\cos\theta\,\mathbf{t}+\sin\theta\,\mathbf{n}\right) \quad \mbox{with} \quad \theta := \alpha\cos(2\pi s)
\end{eqnarray}
and where $r$ and $\alpha$ are functions determined below. The image of each loop is an arc of circle of amplitude $2\alpha$ and radius $r$. Its average is
\begin{eqnarray*}
    \overline{\gamma} = r\left(\int_0^1\cos(\alpha\cos(2\pi s))\d s\right)\mathbf{t} = rJ_0(\alpha)\mathbf{t}
\end{eqnarray*}
where $J_0$ denote the Bessel function of the first kind and of order 0. Recall the definition of $u$ introduced in Section~\ref{subsec:corrugation-frame}. We choose $r$ and $\alpha$ such that $\overline{\gamma} = df(u)$, i.e.,
\begin{eqnarray}\label{eq:condition_r_alpha}
    rJ_0(\alpha)\mathbf{t}=df(u).
\end{eqnarray}
In absolute value, the Bessel function $J_0$ is lower than 1. Hence Formula~\eqref{eq:condition_r_alpha} is satisfied if and only if
\begin{eqnarray}\label{eq:condition-on-r-alpha}
    r\geq \left\|df(u) \right\| \quad\text{and}\quad \alpha:=J_0^{-1}\left(\frac{1}{r}\|df(u)\|\right),
\end{eqnarray}
where $J_0^{-1}$ denotes the inverse of the restriction $J_0|_{[0,\kappa_0[}$ of the Bessel function to the positive numbers less than its first zero $\kappa_0=2.404...$

\begin{lemme}\label{lem:expression-target-differential}
If the radius $r$ and the amplitude $\alpha$ satisfy~\eqref{eq:condition-on-r-alpha}  then
the target differential $L$ has the following expression
\begin{eqnarray*}\label{eq:target-differential}
    L = [df]^W + r(\cos\theta\,\mathbf{t} + \sin\theta\,\mathbf{n}) \otimes d\varpi
\end{eqnarray*}
and 
$$\pullback{L} = \pullback{f}+\left( r^2-\left\|df(u)\right\|^2 \right) d\varpi \otimes d\varpi.$$
\end{lemme}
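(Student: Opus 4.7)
The plan is to substitute the explicit choice of $\gamma$ and the averaging condition~\eqref{eq:condition_r_alpha} directly into the definition of the target differential, then exploit the orthogonal splitting $df = [df]^W + [df]^{W^\perp}$ to simplify.

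First, I would write $L = df + (\gamma(\cdot,N\varpi) - \overline{\gamma})\otimes d\varpi$ and use the averaging condition $\overline{\gamma} = df(u)$ together with the identity $[df]^{W^\perp} = df(u)\otimes d\varpi$ recalled in~\eqref{eq:df_P_perp}. This gives
\begin{eqnarray*}
L = [df]^W + [df]^{W^\perp} - df(u)\otimes d\varpi + \gamma(\cdot,N\varpi)\otimes d\varpi = [df]^W + \gamma(\cdot,N\varpi)\otimes d\varpi,
\end{eqnarray*}
and plugging in the defining formula~\eqref{eq:def_gamma} for $\gamma$ (with $\theta = \alpha\cos(2\pi N\varpi)$) yields the claimed expression for $L$.

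For the pullback formula, I would compute $\pullback{L}(X,Y) = \langle L(X), L(Y)\rangle$ for arbitrary tangent vectors $X,Y$. Since $[df]^W$ is collinear with $\mathbf{w}$ while $\cos\theta\,\mathbf{t} + \sin\theta\,\mathbf{n}$ lies in the plane $\mathrm{span}(\mathbf{t},\mathbf{n})$ orthogonal to $\mathbf{w}$, the cross terms vanish and I get
\begin{eqnarray*}
\pullback{L}(X,Y) = \langle [df(X)]^W, [df(Y)]^W\rangle + r^2\,d\varpi(X)\,d\varpi(Y),
\end{eqnarray*}
where I used $\cos^2\theta + \sin^2\theta = 1$ and $\|\mathbf{t}\| = \|\mathbf{n}\| = 1$. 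Then using the orthogonal decomposition once more, together with $\|[df(X)]^{W^\perp}\|^2 = \|df(u)\|^2 (d\varpi(X))^2$ coming from~\eqref{eq:df_P_perp}, I have $\langle [df(X)]^W, [df(Y)]^W\rangle = \pullback{f}(X,Y) - \|df(u)\|^2\,d\varpi(X)\,d\varpi(Y)$. Substituting gives the announced formula.

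There is no real obstacle here; the argument is essentially bookkeeping of the orthogonal decomposition. The only point worth care is checking that the correction term $\gamma(\cdot,N\varpi) - \overline{\gamma}$ combines with $[df]^{W^\perp}$ to produce a single clean vector $r(\cos\theta\,\mathbf{t} + \sin\theta\,\mathbf{n})$ in the $(\mathbf{t},\mathbf{n})$-plane, which is precisely what the calibration~\eqref{eq:condition_r_alpha} of $r$ and $\alpha$ was designed for: it ensures that the average of the loop cancels exactly the $W^\perp$-component of $df$, so that $L$ only keeps the $W$-part of $df$ together with a pure $(\mathbf{t},\mathbf{n})$-perturbation along $d\varpi$.
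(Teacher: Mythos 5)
Your proof is correct and takes essentially the same route as the paper: the first identity is obtained by the same substitution of $\overline{\gamma}=df(u)$ together with $[df]^W = df - df(u)\otimes d\varpi$. For the pullback formula the paper merely states it is ``easily checked over the basis $(u,w)$''; your coordinate-free verification via the orthogonality of $W$ and $\mathrm{span}(\mathbf{t},\mathbf{n})$ is a routine expansion of that same check.
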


\begin{proof}
From Definition~\ref{def:target_differential} and the value of the average $\overline{\gamma}$, we have
\begin{eqnarray}
\label{eq:L-df}
    L & = & df-df(u)\otimes d\varpi+r(\cos\theta\,\mathbf{t} + \sin\theta\,\mathbf{n})\otimes d\varpi
\end{eqnarray}
It then remains to observe that $[df]^W=df-df(u)\otimes d\varpi$ by~\eqref{eq:df_P_perp} to obtain the first equality. The second equality is easily checked over the basis $(u,w)$.
\end{proof}

\noi
For any smooth map $\eta:\widetilde{C}\to\R_{\geq 0}$ we consider the metric on $\widetilde{C}$
\begin{eqnarray*}
	\mu := \pullback{f}+ \eta d\varpi \otimes d\varpi.
\end{eqnarray*}

\begin{coro}\label{cor:L-mu-isometric} 
Let $\alpha$ be given by~\eqref{eq:condition_r_alpha} with
\begin{eqnarray*}
    r := \sqrt{ \eta + \left\| df(u) \right\|^2}
\end{eqnarray*}
then the map $L$ is $\mu$-isometric, i.e., $\pullback{L} = \mu$. In particular, at every point $p\in\widetilde{C}$, the linear map $L(p)$ is a monomorphism.
\end{coro}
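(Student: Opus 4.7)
The plan is to deduce the corollary directly from Lemma~\ref{lem:expression-target-differential} by substituting the prescribed value of $r$; essentially no new computation is required, only a verification that the choice is admissible and that positive-definiteness transfers to $L$.

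First I would check that $\alpha$ is well-defined. With $r := \sqrt{\eta + \|df(u)\|^2}$ and $\eta \geq 0$, we have $r \geq \|df(u)\|$, so $\|df(u)\|/r \in [0,1]$. The first zero $\kappa_0$ of $J_0$ is greater than $2.4$, so $J_0$ takes every value in $[0,1]$ on $[0,\kappa_0[$; hence $\alpha = J_0^{-1}(\|df(u)\|/r)$ is well-defined and the hypotheses~\eqref{eq:condition-on-r-alpha} required to invoke Lemma~\ref{lem:expression-target-differential} are met.

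Second I would substitute into the second identity of Lemma~\ref{lem:expression-target-differential}. Since $r^2 - \|df(u)\|^2 = \eta$ by our choice of $r$, we obtain
\begin{equation*}
\pullback{L} \;=\; \pullback{f} + \bigl(r^2 - \|df(u)\|^2\bigr)\, d\varpi \otimes d\varpi \;=\; \pullback{f} + \eta\, d\varpi \otimes d\varpi \;=\; \mu,
\end{equation*}
which is the desired $\mu$-isometry.

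Finally, for the monomorphism claim, I would argue that $\mu$ is positive definite at every $p \in \widetilde{C}$: because $f$ is an immersion, $\pullback{f}$ is a Riemannian metric, and the additional term $\eta\, d\varpi\otimes d\varpi$ is positive semi-definite, so the sum is positive definite. If $L(p)(v) = 0$ for some $v \in T_p \widetilde{C}$, then $\pullback{L}(v,v) = \mu(v,v) = 0$, forcing $v = 0$. Thus $L(p) : \R^2 \to \E^3$ has trivial kernel, i.e., is a monomorphism. There is no genuine obstacle here; the only thing one must not forget is to verify the admissibility condition $r \geq \|df(u)\|$ before applying the previous lemma.
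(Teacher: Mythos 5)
Your proof is correct and is precisely the argument the paper leaves implicit: Corollary~\ref{cor:L-mu-isometric} is a direct substitution into the second identity of Lemma~\ref{lem:expression-target-differential}, with the monomorphism claim following from positive-definiteness of $\mu$. (A small imprecision: on $[0,\kappa_0[$ the function $J_0$ ranges over $]0,1]$, not $[0,1]$, but this is harmless since $\|df(u)\|>0$ because $f$ is an immersion.)
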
 
\noi
Since $\gamma$ depends only on $f$ and $\eta$, we introduce the following notations.
\paragraph{Notations.}
Let $f$ be an immersion, $\varpi$ a projection, $N>0$ an integer and $\eta> 0$ a function. The map obtained by the corrugation process in Definition~\ref{def:CP_general} is denoted by
\begin{eqnarray*}
  F = CP(f,\eta, \varpi, N)
\end{eqnarray*}
where we choose the family of loops \eqref{eq:def_gamma} with $r$ and $\alpha$ as in Corollary~\ref{cor:L-mu-isometric}. Beware that $f$ should be an immersion in order to have a well defined corrugation frame, allowing to define $\gamma$ as in~\eqref{eq:def_gamma}.
We also denote by 
\begin{eqnarray*}
  L(f,\eta,\varpi,N)
\end{eqnarray*}
the target differential $L$ of Definition~\ref{def:target_differential}, where we again choose the family of loops \eqref{eq:def_gamma} with $r$ and $\alpha$ as in Corollary~\ref{cor:L-mu-isometric}. 

\subsection{Properties of the corrugation process}
In this section, we fix an immersion $f$,  a projection $\varpi$  and a function $\eta> 0$.
\begin{lemme}\label{lem:propriete_CP}
For all point $p\in \widetilde{C}$, the map $F = CP(f,\eta, \varpi,N)$ satisfies 
\begin{eqnarray*}
\left\{\begin{array}{l} 
  \|F(p)-f(p)\| = O(\frac{1}{N}),\vspace*{1mm}\\ \|dF(p)-L(p)\| = O(\frac{1}{N}),\vspace*{1mm}\\
  \|\pullback{F}(p)-\mu(p)\| = O(\frac{1}{N}),\vspace*{1mm}\\
  \|dF(p)-df(p)\| \leq  \|dF(p)-L(p)\| + \sqrt{7\eta(p)}\|d\varpi\|
  \end{array}\right.
\end{eqnarray*}
\end{lemme}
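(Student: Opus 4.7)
The plan is to handle (1), (2), and (3) quickly using the formulas already established in Section~\ref{subsec:target_differential} together with the periodicity identity~\eqref{eq:periodicity-int}, and then to reduce (4) to a scalar inequality involving the Bessel function $J_0$. For (1), since $\int_0^1(\gamma(p,\cdot) - \overline{\gamma}(p))\,ds = 0$, I would decompose $\int_0^{N\varpi(p)}$ into full-period integrals, which vanish, plus a residual integral over an interval of length at most $1$. That residual is uniformly bounded because $\|\gamma\| = r$ and $\|\overline\gamma\| = rJ_0(\alpha) \leq r$, with $r$ smooth on the compact setting; dividing by $N$ yields the $O(1/N)$ bound. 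For (2), the same averaging argument applied to the $p$-differential of $\gamma - \overline\gamma$---using $d\int_0^1(\gamma - \overline\gamma)\,ds = 0$---shows that the last term of~\eqref{eq:dF} is $O(1/N)$, which is precisely $\|dF - L\|$.

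For (3), I would combine $dF = L + O(1/N)$ from (2) with the uniform boundedness of $L$ to get $\pullback{F} = \pullback{L} + O(1/N)$ by bilinearity of the scalar product; Corollary~\ref{cor:L-mu-isometric} identifies $\pullback{L}$ with $\mu$. For (4), the triangle inequality reduces the matter to bounding $\|L - df\|$. Using Lemma~\ref{lem:expression-target-differential} together with $df(u) = rJ_0(\alpha)\mathbf{t}$ from~\eqref{eq:condition_r_alpha}, I would compute
$$L - df = r\bigl[(\cos\theta - J_0(\alpha))\mathbf{t} + \sin\theta\,\mathbf{n}\bigr]\otimes d\varpi,$$
whose operator norm squared equals $r^2\bigl(1 + J_0(\alpha)^2 - 2J_0(\alpha)\cos\theta\bigr)\|d\varpi\|^2$. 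Since $\theta \in [-\alpha,\alpha]$ and $J_0(\alpha) \geq 0$ on $[0,\kappa_0)$, the worst case is $\cos\theta = \cos\alpha$. Dividing by $\eta = r^2(1 - J_0(\alpha)^2)$, the bound $\|L - df\|^2 \leq 7\eta\,\|d\varpi\|^2$ reduces to the scalar inequality
$$4J_0(\alpha)^2 - J_0(\alpha)\cos\alpha \leq 3, \qquad \alpha \in [0,\kappa_0).$$

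The main obstacle is this last scalar inequality, since equality holds at $\alpha = 0$---the degenerate case $\eta = 0$---so the margin is only second-order. I would establish it by Taylor-expanding near $0$ using $J_0'(0) = 0$ and $J_0''(0) = -\tfrac{1}{2}$ (from the Bessel equation), obtaining $3 - \tfrac{5}{4}\alpha^2 + O(\alpha^4)$ near the origin, together with a direct monotonicity check or straightforward numerical verification on the complement of a small neighborhood of $0$ inside $[0,\kappa_0)$, where $J_0(\alpha)$ stays bounded away from $1$ and the inequality becomes comfortable.
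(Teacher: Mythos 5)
Your proposal follows the paper's proof essentially step by step: parts (1)--(3) invoke the periodicity/averaging argument and the identity $\pullback{L}=\mu$, and part (4) reduces via the triangle inequality to bounding $\|L-df\|$, with the worst case $\cos\theta=\cos\alpha$ and the Bessel relation $\eta = r^2(1-J_0(\alpha)^2)$ giving a scalar inequality in $\alpha$. Your inequality $4J_0(\alpha)^2 - J_0(\alpha)\cos\alpha \leq 3$ is algebraically equivalent to the paper's $1 + J_0^2(\alpha) - 2J_0(\alpha)\cos\alpha \leq 7(1-J_0^2(\alpha))$; the only real difference is that the paper cites this as Sublemma~5 of~\cite{ensaios}, whereas you sketch a self-contained proof by Taylor expansion at $\alpha=0$ (correctly getting $3-\tfrac{5}{4}\alpha^2 + O(\alpha^4)$, so the equality at $\alpha=0$ is escaped at second order) plus a check on the rest of $[0,\kappa_0)$ — a reasonable and slightly more transparent substitute for the citation.
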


\begin{proof}
The first two equalities follow from properties $(i)$ and $(ii)$ of Section~\ref{subsec:target_differential}, the third from $(ii)$ and the fact that $\pullback{L} = \mu.$ For the last inequality we have, by the triangle inequality, 
$$\|dF(p)-df(p)\|\leq\|dF(p)-L(p)\|+\|L(p)-df(p)\|.$$
Equation~\eqref{eq:L-df} shows that the difference $L-df$ reduces to a tensor product of the form $X\otimes d\varpi$  where 
$$\begin{array}{lll}
 \|X\|^2 & = & \left(r\cos\theta -\left\|df(u)\right\|\right)^2+r^2\sin^2\theta.\vspace*{1mm}\\
 \end{array}$$
By using Equation~\eqref{eq:condition_r_alpha}, we obtain
$$\|X\|^2=r^2\left(1 + J_0^2(\alpha) -2 J_0(\alpha)\cos\theta\right).$$
Since the first positive root of $J_0$ is lower than $\pi$ we have $\pi>\alpha>\theta$ and
$$\|X\|^2\leq r^2\left(1 + J_0^2(\alpha) -2 J_0(\alpha)\cos\alpha\right).$$
We then use the following inequality from~\cite[Sublemma 5]{ensaios}:
\begin{eqnarray*}
	1 + J_0^2(\alpha) - 2J_0(\alpha)\cos\alpha \leq 7(1-J_0^2(\alpha))
\end{eqnarray*}
that holds for every $\alpha$ between zero and the first positive root of $J_0$. We finally obtain by Corollary~\ref{cor:L-mu-isometric}  
$$\|X\|^2\leq 7\left(r^2-\left\|df(u) \right\|^2 \right)=7\eta.$$
\end{proof}
\noi
For every linear map $\Psi:\R^2\to\E^3$ we set
\begin{align}
    \lambda(\Psi):=\inf_{v\in\R^2\setminus\{0\}}\frac{\|\Psi(v)\|}{\|v\|}. \label{eq:lambda}
\end{align}
So, $\Psi$ is a monomorphism if and only if $\lambda(\Psi)>0.$

\begin{lemme}\label{lem:F-immersion}
Let $F=CP(f,\eta,\varpi,N)$. For all $p\in\widetilde{C}$, we have
\begin{eqnarray*} 
    \lambda(dF(p))\geq \lambda(df(p))-\|dF(p)-L(p)\|.
\end{eqnarray*}
Hence, if $\lambda(df(p))> \|dF(p)-L(p)\|$ for all $p\in \widetilde{C}$ then the map $F$ is an immersion.
\end{lemme}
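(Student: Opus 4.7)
The plan is to establish the inequality $\lambda(dF(p)) \geq \lambda(df(p)) - \|dF(p) - L(p)\|$ by inserting $L(p)$ as an intermediate and exploiting the fact that $L$ is $\mu$-isometric, with $\mu \geq \pullback{f}$ pointwise.

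First I would prove a general reverse-triangle inequality for $\lambda$: for any two linear maps $A,B:\R^2\to\E^3$ and any $v\in\R^2\setminus\{0\}$, one has $\|A(v)\|\geq \|B(v)\|-\|(A-B)(v)\|\geq \|B(v)\|-\|A-B\|\,\|v\|$, so dividing by $\|v\|$ and taking the infimum yields $\lambda(A)\geq \lambda(B)-\|A-B\|$. Applied with $A=dF(p)$ and $B=L(p)$, this gives $\lambda(dF(p))\geq \lambda(L(p))-\|dF(p)-L(p)\|$.

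Next I would show that $\lambda(L(p))\geq \lambda(df(p))$. By Corollary~\ref{cor:L-mu-isometric} the choice of $r$ ensures $\pullback{L}=\mu=\pullback{f}+\eta\, d\varpi\otimes d\varpi$. Since $\eta\geq 0$, for every $v\in\R^2$ we have
\begin{equation*}
\|L(p)(v)\|^{2}=\mu(p)(v,v)=\|df(p)(v)\|^{2}+\eta(p)(d\varpi(v))^{2}\geq \|df(p)(v)\|^{2},
\end{equation*}
so $\|L(p)(v)\|\geq \|df(p)(v)\|$, and passing to the infimum over unit vectors gives $\lambda(L(p))\geq \lambda(df(p))$. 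Combining the two inequalities yields the stated bound.

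For the final assertion, if $\lambda(df(p))>\|dF(p)-L(p)\|$ for every $p\in\widetilde{C}$, then the bound gives $\lambda(dF(p))>0$, which by definition~\eqref{eq:lambda} means $dF(p)$ is injective, hence a monomorphism, so $F$ is an immersion. There is no real obstacle here: the only subtle point is recognizing that the pullback identity $\pullback{L}=\mu$ from Corollary~\ref{cor:L-mu-isometric} is what makes $L$ infinitesimally \emph{longer} than $df$, and that this pointwise length-increase transfers into the lower bound on $\lambda(L)$ needed to absorb the corrugation error $\|dF-L\|=O(1/N)$.
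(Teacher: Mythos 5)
Your proof is correct and follows essentially the same approach as the paper: apply the reverse triangle inequality with $L(p)$ as an intermediate and then use the $\mu$-isometry of $L$ (from Corollary~\ref{cor:L-mu-isometric}) to get $\|L(p)(v)\|\geq\|df(p)(v)\|$. The only cosmetic difference is that you split the chain into two separate $\lambda$-inequalities before combining, while the paper does it in a single pointwise chain over $v$.
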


\begin{proof}
For every vector $v\in \R^2$, we have by the reverse triangle inequality:
\[
    \|dF(p)(v)\| \geq \|L(p)(v)\| - \|dF(p)(v)-L(p)(v)\|.
\]
Since $L$ is $\mu$-isometric, we also have
\[
    \|L(p)(v)\| = \sqrt{\mu(p)(v,v)} \geq \|df(p)(v)\|.
\]
Putting together the two inequalities, we easily deduce the inequality in the lemma. 
\end{proof}

\begin{lemme}\label{lem:F-embedding}
Let $F=CP(f,\eta,\varpi,N)$. If $f$ is an embedding, $F$ is an immersion and if, on some compact set $K$, the amplitude $\alpha$ is strictly lower than $\pi/2$ then for $N$ large enough the restriction of $F$ on $K$ is an embedding.
\end{lemme}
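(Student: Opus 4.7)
The plan is to argue by contradiction, with a blow-up analysis that isolates precisely where the hypothesis $\alpha<\pi/2$ is used.

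\textbf{Set-up.} Suppose otherwise; then there exist sequences $N_n\to\infty$ and distinct points $p_n,q_n\in K$ with $F_{N_n}(p_n)=F_{N_n}(q_n)$. Compactness of $K$ together with the uniform estimate $\|F-f\|_{K,\infty}=O(1/N)$ (Lemma~\ref{lem:propriete_CP}) and the embedding of $f$ force $p_n,q_n\to p^*$ for some common limit $p^*\in K$ along a subsequence. After further subsequence extraction, set $e_n:=\|p_n-q_n\|$, $v_n:=(p_n-q_n)/e_n\to v$ with $\|v\|=1$, $a_n:=\varpi(p_n)-\varpi(q_n)$, $\lambda_n:=N_n e_n\to\lambda\in[0,+\infty]$ and $\phi_n:=N_n\varpi(q_n)\bmod 1\to\phi^*$. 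I choose the fixed basis $e_1:=u(p^*)$, $e_2:=w(p^*)$ of $\R^2$, which satisfies $d\varpi(e_1)=1$ and $d\varpi(e_2)=0$, write $v=v^{(1)}e_1+v^{(2)}e_2$, and note that $a_n/e_n\to v^{(1)}$.

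\textbf{Blow-up.} From Definition~\ref{def:CP_general}, I split
\[
F(p)-F(q)=\bigl(f(p)-f(q)\bigr)+\frac{1}{N}\!\!\int_{N\varpi(q)}^{N\varpi(p)}\!\!\bigl(\gamma(q,s)-\overline\gamma(q)\bigr)\,ds+\mathcal{E}(p,q),
\]
where $\mathcal{E}$ absorbs the Taylor residue of $f$ and the ``loop discrepancy'' $\frac{1}{N}\int_0^{N\varpi(p)}[(\gamma(p,s)-\overline\gamma(p))-(\gamma(q,s)-\overline\gamma(q))]\,ds$. Differentiating the identity $\int_0^1\gamma(p,s)\,ds=\overline\gamma(p)$ in $p$ shows that the first-order spatial Taylor coefficient $d_p\gamma(q,\cdot)-d_p\overline\gamma(q)$ is $1$-periodic with vanishing $s$-mean, so its integral over $[0,N\varpi(p)]$ stays bounded in $N$, yielding $\|\mathcal{E}(p_n,q_n)\|=O(e_n^2)+O(e_n/N_n)$. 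Performing the successive changes of variable $s=N_n\varpi(q_n)+N_nt$ and $t=a_n\tau$, Taylor-expanding $f$ at $q_n$, dividing $F(p_n)-F(q_n)=0$ by $e_n$, and passing to the limit yield
\[
df(p^*)(v)+v^{(1)}\,I=0,\qquad I:=\lim_n\int_0^1\!\bigl(\gamma(q_n,\phi_n+N_na_n\tau)-\overline\gamma(q_n)\bigr)\,d\tau,
\]
with $I=\int_0^1(\gamma(p^*,\phi^*+\lambda v^{(1)}\tau)-\overline\gamma(p^*))\,d\tau$ when $\lambda<\infty$, while $v^{(1)}I=0$ when $\lambda=+\infty$ (trivially if $v^{(1)}=0$, and by rapid averaging in $\tau$ otherwise).

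\textbf{Contradiction.} I decompose the limit identity in the orthonormal frame $(\mathbf{t},\mathbf{w},\mathbf{n})(p^*)$. Since $I$ lies in the $(\mathbf{t},\mathbf{n})$-plane and $df(p^*)(e_2)=\|df(w)\|_{p^*}\mathbf{w}(p^*)$, the $\mathbf{w}$-component reads $v^{(2)}\|df(w)\|_{p^*}=0$, forcing $v^{(2)}=0$ and, since $\|v\|=1$, $v^{(1)}\neq 0$. If $\lambda=+\infty$, the identity reduces to $df(p^*)(v)=0$, contradicting the fact that $f$ is an immersion. If $\lambda<\infty$, using the relation $\overline\gamma(p^*)=df(u)|_{p^*}=r(p^*)J_0(\alpha(p^*))\mathbf{t}(p^*)$ from~\eqref{eq:condition_r_alpha}, the $\mathbf{t}$-component collapses to
\[
v^{(1)}\,r(p^*)\int_0^1\cos\bigl(\alpha(p^*)\cos(2\pi(\phi^*+\lambda v^{(1)}\tau))\bigr)\,d\tau=0,
\]
and the hypothesis $\alpha(p^*)<\pi/2$ enters decisively: the integrand is bounded below by $\cos\alpha(p^*)>0$, so the integral is strictly positive while $v^{(1)}\neq 0$ --- a contradiction.

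\textbf{Main obstacle.} The technical heart is the residue estimate $\|\mathcal{E}\|=O(e_n^2)+O(e_n/N_n)$, which must hold uniformly across the three regimes $\lambda=0$, $\lambda\in(0,+\infty)$ and $\lambda=+\infty$. A naive spatial Taylor expansion of the loop discrepancy gives an $s$-integrand of size $\|p-q\|$ on an interval of length $\sim N$, which after dividing by $N$ would contribute an error of order $\|p-q\|$ --- exactly the scale of the leading term in the intermediate regime. Only the zero $s$-mean of the first spatial Taylor coefficient, inherited from differentiating $\int\gamma=\overline\gamma$ in $p$, keeps this contribution strictly below the critical scale.
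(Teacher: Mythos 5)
Your proof is correct and takes a genuinely different route from the paper's. The paper argues directly: it uses $\alpha<\pi/2$ to show that the normal $\mathbf{n}_F$ of $F$ stays within angle strictly less than $\pi/2$ of the original normal $\mathbf{n}$, so that $F$ is locally a graph over $df(p)(\R^2)$ with a \emph{uniform} radius $\delta_K$ and corrugation threshold $N_K$; it then handles the far regime $\|p-q\|\geq\delta_K$ by combining the separation $d_f\geq d_{\min}>0$ coming from $f$ being an embedding with the $C^0$ estimate $\|F-f\|_{K,\infty}=O(1/N)$. Your proof instead argues by contradiction via a blow-up at scale $e_n=\|p_n-q_n\|$: you extract the limit identity $df(p^*)(v)+v^{(1)}I=0$, decompose it in the corrugation frame $(\mathbf{t},\mathbf{w},\mathbf{n})(p^*)$ (the $\mathbf{w}$-component kills $v^{(2)}$, the immersion hypothesis on $f$ kills the case $\lambda=\infty$), and feed $\alpha<\pi/2$ through the strict positivity of $\int_0^1\cos\bigl(\alpha\cos(2\pi(\phi^*+\lambda v^{(1)}\tau))\bigr)\d\tau\geq\cos\alpha>0$. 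Both proofs rest on the same two pillars — compactness of $K$ plus $f$ being an embedding, and the hypothesis $\alpha<\pi/2$ — but they translate $\alpha<\pi/2$ differently: the paper into a normal-cone/graph condition, you into the positivity of a Bessel-type average. The paper's version is more geometric and immediately exposes the local graph structure. Your blow-up collapses the near ($\lambda=0$), critical ($0<\lambda<\infty$) and far ($\lambda=\infty$) regimes into a single passage to the limit, which is elegant but does hinge on the residue estimate $\|\mathcal E\|=O(e_n^2)+O(e_n/N_n)$; you correctly identify that this is where the argument could leak an $O(e_n)$ contribution at the critical scale, and that the vanishing $s$-mean of $d_p(\gamma-\overline\gamma)(q,\cdot)$, obtained by differentiating $\int_0^1\gamma(p,s)\,\d s=\overline\gamma(p)$, is exactly what prevents it. The steps all check out, including the change of variables and the case distinctions on $\lambda$; the only cosmetic point is that your displayed identity keeps $f(p)-f(q)$ as its leading term while you say $\mathcal E$ absorbs the Taylor residue of $f$ — the intended leading term is $df(q)(p-q)$.
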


\begin{proof}
Let $p\in K$ and let $w,u $ be as in Section~\ref{subsec:corrugation-frame}.
From \eqref {eq:L-df} and Lemma~\ref{lem:propriete_CP}, we have
\begin{eqnarray*}
    dF(p)(u) &=& r(p)(\cos\theta(p)\, \mathbf{t}(p) + \sin\theta(p)\, \mathbf{n}(p)) + O(1/N)\\
    dF(p)(w) &=& df(p)(w) + O(1/N)
\end{eqnarray*}
with $\theta(p) = \alpha(p)\cos(2\pi N\varpi(p))$. It follows that the angle between the normal $\mathbf{n}_F(p)$ of $F$ at $p$ and the normal $\mathbf{n}(p) = \mathbf{n}_f(p)$ is less than $\alpha(p) + O(1/N)$. By the hypothesis on $\alpha$, we deduce that there exists a radius $\delta(p)>0$ and a corrugation number $N(p)$ such that for all $q\in D(p,\delta(p))$ and for all $N\geq N(p)$ the angle between $\mathbf{n}_F(q)$ and $\mathbf{n}(p)$ is strictly less than $\pi/2$. Since $K$ is compact, we easily deduce that there exists $\delta_K>0$ and a corrugation number $N_K$ such that for all $p,q\in K$ with $\|p-q\|<\delta_K$ the angle between $\mathbf{n}_F(q)$ and $\mathbf{n}(p)$ is strictly less than $\pi/2$. Thus, over each  $D^2(p,\delta_K)$ the immersion $F$ is a graph over $df(p)(\R^2)$, hence its restriction to $D^2(p,\delta_K)$ is an embedding. The crucial point of this approach is that $\delta_K$ does not depend on $N\geq N_K.$ We now consider the two distances
$$d_f(p_1,p_2):=\|f(p_2)-f(p_1)\|\quad\mbox{and}\quad d_F(p_1,p_2):=\|F(p_2)-F(p_1)\|$$
and the following neighborhood of the diagonal of $K\times K$ :
$$V(\delta_K)=\{(p_1,p_2)\in K\times K\, |\, \exists p\mbox{ such that } p_1,p_2\in D^2(p,\delta_K)\}.$$
Since the complement $K\times K\setminus V(\delta_K)$ is relatively compact and $f$ is an embedding, we have on this complement
$$d_f(p_1,p_2)\geq d_{min}>0.$$
From Lemma~\ref{lem:propriete_CP}, we know that $\|F-f\|_{K,\infty}=O(N^{-1})$ and thus there exists $N'_K\geq N_K$ such that for all $N\geq N'_K$, $\|F-f\|_{K,\infty}<d_{min}/3$. It follows that
$$\forall (p_1,p_2)\in K\times K,\quad d_F(p_1,p_2)\geq d_f(p_1,p_2)-\frac{2}{3}d_{min}.$$ 
This implies that the function $d_F$ never vanishes on $K\times K\setminus V(\delta_K)$.  Since the restriction to $D^2(p,\delta_K)$ of $F$ is an embedding, the distance $d_F$ can not vanish on $V(\delta_k)$, except at the points of the diagonal. This shows that $F$ is an embedding. 
\end{proof}

\paragraph{Descending to the quotient $C$.}\label{subsec:descending-to-quotient}
In general, the affine projection $\varpi:\widetilde{C}\to \R$ does not descend to the quotient~$C.$ However, its differential $d\varpi: \widetilde{C} \to \mathcal{L}(\R^2,\R)$ does, since it is constant. If the immersion $f:\widetilde{C} \to\E^3$ and the map $\eta: \widetilde{C} \to \R_{>0}$ descend to the quotient, the metric $\mu = \pullback{f} + \eta d\varpi\otimes d\varpi$ also descends to the quotient. The lemma below easily follows from Definition~\ref{def:CP_general} and the 1-periodicity observed in Equation~\eqref{eq:periodicity-int}.

\begin{lemme}\label{lem:CP-quotient}
If $f$ and $\eta$ descend to the quotient~$C$ and if $\varpi$ satisfies
$$\forall (\rho,\varphi)\in\widetilde{C},\quad \varpi(\rho,\varphi+2\pi)-\varpi(\rho,\varphi)\in \Z$$
then the map
$F=CP(f,\mu,\varpi,N)$ descends to the quotient~$C.$  
\end{lemme}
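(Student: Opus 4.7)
The plan is to verify directly that $F(\rho, \varphi+2\pi) = F(\rho,\varphi)$ for every $(\rho,\varphi)\in \widetilde C$. From the defining formula~\eqref{eq:CP_general}, this splits into two checks: (a) the ``base'' term $f(p)$ is $2\pi$-periodic in $\varphi$, which is given by hypothesis; (b) the integral correction
$$\frac{1}{N}\int_0^{N\varpi(p)}\bigl(\gamma(p,s)-\overline{\gamma}(p)\bigr)\d s$$
is also $2\pi$-periodic in $\varphi$. Only (b) requires work.

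First I would record that all ingredients defining the loop $\gamma$ descend to the quotient. Indeed, $\gamma$ is constructed in~\eqref{eq:def_gamma} from the corrugation frame $(\mathbf{t},\mathbf{n})$ together with the scalar functions $r$ and $\alpha$ of Corollary~\ref{cor:L-mu-isometric}. The frame $(\mathbf{t},\mathbf{w},\mathbf{n})$ depends only on $df$ and on $d\varpi$ (see Section~\ref{subsec:corrugation-frame}), both of which descend --- the latter because $d\varpi$ is constant. The scalars $r$ and $\alpha$ are expressed via $\eta$ and $\|df(u)\|$, where $u$ itself is determined by $df$ and $d\varpi$. Hence $\gamma((\rho,\varphi+2\pi),s) = \gamma((\rho,\varphi),s)$ for every $s$, and therefore $\overline{\gamma}$ descends as well.

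It remains to compare, at $p=(\rho,\varphi)$, the two integrals
$$I_1 := \int_0^{N\varpi(\rho,\varphi)}\!\!\!\bigl(\gamma(p,s)-\overline{\gamma}(p)\bigr)\d s\quad\text{and}\quad I_2 := \int_0^{N\varpi(\rho,\varphi+2\pi)}\!\!\!\bigl(\gamma(p,s)-\overline{\gamma}(p)\bigr)\d s,$$
where in $I_2$ I have already replaced $\gamma((\rho,\varphi+2\pi),\cdot)$ and $\overline\gamma(\rho,\varphi+2\pi)$ by their counterparts at $(\rho,\varphi)$, by the previous paragraph. Their difference $I_2-I_1$ is an integral of the 1-periodic zero-mean function $s\mapsto \gamma(p,s)-\overline{\gamma}(p)$ over an interval of length $N\bigl(\varpi(\rho,\varphi+2\pi)-\varpi(\rho,\varphi)\bigr)$. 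By the hypothesis on $\varpi$ this length is $Nk$ for some $k\in\Z$; since $N\in\N^*$ it is an integer. The identity~\eqref{eq:periodicity-int}, applied $|Nk|$ times, then yields $I_2=I_1$, and combined with (a) this gives $F(\rho,\varphi+2\pi)=F(\rho,\varphi)$.

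The argument is essentially bookkeeping; the only subtle point is ensuring that every object entering $\gamma$ really is periodic in $\varphi$, which is why I would devote the first step to unpacking the construction of $\gamma$ and tracing its dependence on $f$, $\eta$ and $d\varpi$ rather than on $\varpi$ itself. Once that is in place, integrality of $N\bigl(\varpi(\rho,\varphi+2\pi)-\varpi(\rho,\varphi)\bigr)$ plus~\eqref{eq:periodicity-int} closes the proof immediately.
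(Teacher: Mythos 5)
Your proof is correct and takes exactly the route the paper intends: the paper's justification is the one-liner ``easily follows from Definition~\ref{def:CP_general} and the 1-periodicity observed in Equation~\eqref{eq:periodicity-int},'' and you have simply unpacked that sentence — verifying that every ingredient of $\gamma$ (the corrugation frame, $r$, $\alpha$, $\overline{\gamma}$) depends only on $f$, $\eta$ and $d\varpi$ and hence descends, and then using the integer jump $N\bigl(\varpi(\rho,\varphi+2\pi)-\varpi(\rho,\varphi)\bigr)\in\Z$ together with~\eqref{eq:periodicity-int} to kill the difference of the two integrals.
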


\section{Isometric 3-corrugated immersions}

In this section, we show under very general assumptions on the sequence of metrics $(g_k)_k$ that it is possible to simplify the Nash-Kuiper construction. This simplification consists, at each step $k$, in fixing the number $n_k$ of linear forms to 3 and in considering throughout the same linear forms $\ell_1,\ell_2,\ell_3$. This allows us to express the variable $\eta$, involved in the Corrugation Process, as a function of the target metric and the map on which the Corrugation Process is applied. The result is an explicit  iterative process~\eqref{eq:CP_i} defining a sequence of applications converging to a $C^1$ isometry, see Proposition~\ref{prop:3-corrugated-process-general}. We say that the limit map is $3$-corrugated. Note that Proposition~\ref{prop:3-corrugated-process-general} is stated in the particular context of the construction of an isometric embedding of the Poincaré disk into $\E^3$ but it could be applied, \textit{mutatis mutandis}, to explicitly construct codimension~1  isometric immersions in $\E^{n+1}$. In this case, the number of linear forms to consider would be $s_n:=\frac{n(n+1)}{2}$ and the limit map would be $s_n$-corrugated.

\subsection{Primitive basis of the cone of metrics}
Let $\mathcal{S}_2(\R^2)$ be the vector space of symmetric bilinear forms of $\R^2$ and let $(\ell_i\otimes\ell_i)_{i\in\{1,2,3\}}$ be a basis of $\mathcal{S}_2(\R^2)$ where $\ell_1$, $\ell_2$ and $\ell_3$ are three linear forms on $\R^2$. We denote by $(H_i)_{i\in\{1,2,3\}}$ the dual basis of the primitive basis $(\ell_i\otimes\ell_i)_{i\in\{1,2,3\}}$. So, each $H_i:\mathcal{S}_2(\R^2) \to\R$ is a linear form and for any symmetric bilinear form
$$B=\sum_{i=1}^3\eta_i\ell_i\otimes\ell_i$$
we have $\eta_i=H_i(B).$ Let
\begin{align}
  h_{max}:=\max\{\|H_1\|,\|H_2\|,\|H_3\|\} \label{eq:hmax}
\end{align}
be the maximum of the norms\footnote{Recall that we use the operator norms.} of the three linear forms $H_i$. We thus have 
\begin{eqnarray*}
    |H_i(B)|\leq h_{max}\|B\|.
\end{eqnarray*}
Let $D:C\to \mathcal{S}_2(\R^2)$ be of class $C^{\infty}$. We also introduce 
\begin{align}H_{min}(D)(p):=\min_{i\in\{1,2,3\}} H_i(D)(p) \quad \text{ and }\quad
  H_{min}(D):=\inf_{p\in C} H_{min}(D)(p). \label{eq:Hmin}
\end{align}
We thus have
\begin{eqnarray*}
    H_{min}(D)> 0\quad\iff\quad   D\in C^{\infty}(C,\mathcal{C})
\end{eqnarray*}
where $\mathcal{C}$ is the positive cone
\begin{align}
    \mathcal{C} := \{ \eta_1\ell_1\otimes\ell_1+ \eta_2\ell_2\otimes\ell_2+ \eta_3\ell_3\otimes\ell_3\, |\, \eta_1> 0,\eta_2> 0,\eta_3> 0\}.\label{eq:cone-C}
\end{align}

\subsection{Definition of the 3-corrugated process}\label{subsec:3-corrugated}
Let $(\varpi_i)_{i\in\{1,2,3\}}$ be three affine projections satisfying the condition of Lemma~\ref{lem:CP-quotient}. We set $\ell_i=d\varpi_i$ and assume that  $(\ell_i\otimes\ell_i)_{i\in\{1,2,3\}}$ is a basis of $\mathcal{S}_2(\R^2)$. 
In general the coefficient $\eta_{k,i}=H_i(D_{k,i})$ of the decomposition of the difference 
\begin{eqnarray}\label{eq:isometric_default_k_i}
    D_{k,i}:=g_k-\pullback{f_{k,i-1}}
\end{eqnarray}
on this basis has no reason to be positive.
If at each step $(k,i)$ this coefficient $\eta_{k,i}$ is positive then the corrugation process is well-defined and can be used to build 
a sequence $(f_{k,i})_{k,i}$ of corrugated maps. In that case, we write
\begin{eqnarray}
\label{eq:CP_i}
f_{k,i}=CP_i(f_{k,i-1},g_k,N_{k,i}),\qquad i\in\{1,2,3\},
\end{eqnarray} 
for $CP(f_{k,i-1},\eta_{k,i},\varpi_i,N_{k,i})$. Indeed, the $\varpi_i$ being given once for all, the  coefficient $\eta_{k,i}$ can be deduced from $g_k$ and $f_{k,i-1}$. The affine projection used in the corrugation process is indicated by the subscript $i$ in $CP_i$.
We use the convention $f_{k-1}=f_{k,0}=f_{k-1,3}$ and set $f_{k}:=f_{k,3}.$ 

\begin{defn}\label{def:3-corrugated-map}
When the maps $f_k$ are constructed as above by iterating the corrugation process \eqref{eq:CP_i} involving the same three maps $\varpi_i$, $i\in\{1,2,3\}$, we say that the sequence $(f_k)_k$ is obtained by a \emph{3-corrugated process} and that the limit map $f_{\infty}$ (if it exists) is \emph{3-corrugated}.
\end{defn}

\subsection{Properties of 3-corrugated limit maps}
The proof of the existence of a 3-corrugated process is not immediate. Not only must we ensure that the isometric default $g_1-\pullback{f_0}$ of the initial map $f_0$ lies in the positive cone $\mathcal{C}$ generated by the $(\ell_i\otimes\ell_i)_i$ but we also need to make sure that this property remains true for the successive isometric defaults $D_{k,i}$: precisely at each step $(k,i)$ the $\ell_i\otimes\ell_i$-component of $D_{k,i}$ must be positive. To deal with this problem we will consider only sequences of metrics satisfying the following property
\begin{equation*}\label{it:P5}\tag{$P_5$}
\forall k\in\N^*,\forall p\in C, \quad g_{k}(p)-g_{k-1}(p)\in \mathcal{C} \nonumber
\end{equation*} 

\begin{prop}\label{prop:3-corrugated-process-general}
Let $f_0:C\to\E^3$ be an immersion and $(g_k)_k\uparrow h$ be an increasing sequence of metrics defined on $C$ and converging toward~$h$ on $C^*=C\setminus(\{1\}\times\R/2\pi\Z)$.  If 
\begin{itemize}
\setlength{\itemindent}{2em}
    \item[$(i)$] the sequence of metrics $(g_k)_k$ satisfies Properties~\ref{it:P4} and~\eqref{it:P5}
    \item[$(ii)$] the corrugation numbers $(N_{k,i})_{k,i}$ are chosen large enough
\end{itemize}
then the sequence $(f_{k,i})_{k,i}$ iteratively defined by~\eqref{eq:CP_i} $C^0$ converges on $C$ toward a 3-corrugated map $f_{\infty}$. The sequence is also $C^1$ converging on $C^*$ and, on that set, $f_{\infty}$ is a $h$-isometric immersion. 
\end{prop}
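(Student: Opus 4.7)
The plan is to verify the four conditions \ref{it:P1}--\ref{it:P4} of Proposition~\ref{prop:Nash-Kuiper} for a suitable choice of the corrugation numbers $(N_{k,i})_{k,i}$ and then apply that proposition to conclude. Since \ref{it:P4} is identical to assumption~$(i)$, only \ref{it:P1}--\ref{it:P3} require work. However, even \emph{defining} the sequence $(f_{k,i})_{k,i}$ requires showing that each $f_{k,i-1}$ is an immersion and that the coefficient $\eta_{k,i}=H_i(D_{k,i})$ remains strictly positive, so the construction and the estimates must be carried out simultaneously by induction on $(k,i)$.

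The induction hypothesis at level $k$ is that $f_{k-1}$ is an immersion that descends to $C$ and that $D_{k,1}\in\mathcal{C}$. The base case is immediate since $D_{1,1}=(g_1-g_0)+(g_0-\pullback{f_0})=g_1-g_0\in\mathcal{C}$ by~\eqref{it:P5}. At each intermediate step, Corollary~\ref{cor:L-mu-isometric} applied with $\eta_{k,i}=H_i(D_{k,i})>0$ and $\varpi_i$ produces a well-defined corrugation $f_{k,i}=CP_i(f_{k,i-1},g_k,N_{k,i})$ that descends to $C$ by Lemma~\ref{lem:CP-quotient} and is an immersion for $N_{k,i}$ large by Lemma~\ref{lem:F-immersion}. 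The key input is the third bound of Lemma~\ref{lem:propriete_CP},
\[\|\pullback{f_{k,i}}-\mu_{k,i}\|=O(1/N_{k,i}),\]
which, combined with $\mu_{k,i}=\pullback{f_{k,i-1}}+\eta_{k,i}\,\ell_i\otimes\ell_i$, gives $D_{k,i+1}=D_{k,i}-\eta_{k,i}\,\ell_i\otimes\ell_i+O(1/N_{k,i})$. Applying $H_j$ to this identity shows $H_j(D_{k,i+1})=(1-\delta_{ij})H_j(D_{k,i})+O(h_{max}/N_{k,i})$, so choosing $N_{k,i}$ large enough preserves positivity of $H_{i+1}(D_{k,i+1}),\ldots,H_3(D_{k,i+1})$. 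Iterating through $i=1,2,3$ yields
\[D_{k+1,1}=(g_{k+1}-g_k)+\sum_{i=1}^{3}O(1/N_{k,i}),\]
which lies in $\mathcal{C}$ provided the $N_{k,i}$ exceed a threshold controlled by the uniform lower bound $H_{min}(g_{k+1}-g_k)>0$. This closes the induction.

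The quantitative properties \ref{it:P1}--\ref{it:P3} now follow from the same estimates. For \ref{it:P1}, the last display yields $\|g_k-\pullback{f_k}\|=\sum_i O(1/N_{k,i})$, which can be made smaller than $\|g_{k+1}-g_k\|$. For \ref{it:P2}, the first inequality of Lemma~\ref{lem:propriete_CP} gives $\|f_k-f_{k-1}\|\leq\sum_{i=1}^{3}O(1/N_{k,i})\leq\tau_k$. For \ref{it:P3}, the fourth inequality of Lemma~\ref{lem:propriete_CP} gives
\[\|df_{k,i}-df_{k,i-1}\|\leq O(1/N_{k,i})+\sqrt{7\,\eta_{k,i}}\,\|\ell_i\|,\]
and since $\eta_{k,i}\leq h_{max}\|D_{k,i}\|\leq h_{max}\bigl(\|g_k-\pullback{f_{k-1}}\|+\sum_{j<i}O(1/N_{k,j})\bigr)$, summing over $i$ and absorbing the $O(1/N)$ terms into $\tau_k$ produces a bound of the form $\|df_k-df_{k-1}\|\leq\tau_k+A\,\|g_k-\pullback{f_{k-1}}\|^{1/2}$ with $A$ depending only on $h_{max}$ and the $\|\ell_i\|$'s.

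The main obstacle is the bookkeeping behind the simultaneous choice of corrugation numbers: at step $(k,i)$, $N_{k,i}$ must be large enough to keep $f_{k,i}$ an immersion, to preserve $H_j(D_{k,i+1})>0$ for $j>i$, to ensure $D_{k+1,1}\in\mathcal{C}$, and to absorb the $1/N_{k,i}$ contributions required by \ref{it:P1}--\ref{it:P3}. Each of these constraints is controlled by uniform estimates on the compact annulus $C$, so a sufficient $N_{k,i}$ always exists once $(f_{k,j})_{j<i}$ is fixed; hence a valid sequence of corrugation numbers can be chosen inductively. Proposition~\ref{prop:Nash-Kuiper} then produces a continuous limit $f_\infty$ on $C$ that is a $C^1$, $h$-isometric immersion on $C^*$, as claimed.
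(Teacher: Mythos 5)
Your proposal is correct and follows the same route as the paper: it re-derives in compact form the contents of Lemmas~\ref{lem:Err}, \ref{lem:minoration_eta}, \ref{lem:D_k4}, \ref{lem:F_ki_well_defined}, \ref{lem:P1} and~\ref{lem:P3}, and then applies Proposition~\ref{prop:Nash-Kuiper}. The one point left implicit is the quantitative mechanism keeping $\inf_C\lambda(df_{k,i})$ bounded away from zero over the whole iteration --- the paper handles this via condition~\eqref{eq:LC3} together with the a~priori constraint~\eqref{eq:tau_k-bounded-lambda-f_0} and the inequality of Lemma~\ref{lem:F-immersion} --- but your step-by-step selection of the $N_{k,i}$ is equivalent in effect.
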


\noi
The proof of this proposition is given in the two next sections. The first one shows that the sequence $(f_k)_k$ is well-defined and the second one that  Properties ($P_1$), ($P_2$) and ($P_3$) hold. Proposition~\ref{prop:3-corrugated-process-general} then follows by applying Proposition~\ref{prop:Nash-Kuiper}.

\subsection{Proof of the existence part}\label{subsec:3-corrugated-process}
The map $f_{k,i}$ constructed via formula~\eqref{eq:CP_i} is well-defined provided that the map $f_{k,i-1}$ is an immersion and that the coefficient $\eta_{k,i} = H_i(D_{k,i})$ is positive. For all $i\in\{1,2,3\}$ we put
\[
    \mu_{k,i}:= \pullback{f_{k,i-1}}+\eta_{k,i}\ell_{i}\otimes\ell_{i}\quad\mbox{and}\quad Err_{k,i}:= \mu_{k,i}-\pullback{f_{k,i}}.
\]
We also set $err_{k,i} := \|Err_{k,i}\|_{C,\infty}.$
By Lemma~\ref{lem:propriete_CP}, $err_{k,i}=O(N_{k,i}^{-1})$. The following Lemmas~\ref{lem:Err},~\ref{lem:minoration_eta} and~\ref{lem:D_k4} show how to control the coefficients $\eta_{k,i}$ and the isometric default $D_{k,4}:=g_k-\pullback{f_k}$ of the map $f_k=f_{k,3}$ in terms of the $err_{k,i}$. Then, Lemma~\ref{lem:F_ki_well_defined} gives a sufficient condition on the choice of corrugation numbers to obtain a well defined sequence of maps $(f_{k,i})_{k\in\N^*, i\in\{1,2,3\}}.$

\begin{lemme}\label{lem:Err} 
For every $i\in\{1,2,3\}$, recalling that $D_{k,i} = g_k-\pullback{f_{k,i-1}}$, we have
$$Err_{k,i} = H_i(D_{k,i}) \ell_i\otimes \ell_i - D_{k,i} + D_{k,i+1}.$$
In particular
\begin{eqnarray}\label{eq:H_i-Err_i}
    \forall j\neq i,\qquad H_j(Err_{k,i})= H_j(D_{k,i+1}-D_{k,i}).
\end{eqnarray}
\end{lemme}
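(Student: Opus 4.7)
The proof is a direct unraveling of the definitions; the only thing to be careful about is the bookkeeping between $\mu_{k,i}$, $D_{k,i}$ and $D_{k,i+1}$. The plan is as follows.

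First I would start from the very definition $Err_{k,i} = \mu_{k,i} - \pullback{f_{k,i}}$ and expand $\mu_{k,i}= \pullback{f_{k,i-1}} + \eta_{k,i}\,\ell_i\otimes\ell_i$, so that
\[
 Err_{k,i} \;=\; \eta_{k,i}\,\ell_i\otimes\ell_i \;+\; \pullback{f_{k,i-1}} - \pullback{f_{k,i}}.
\]
Then I would substitute $\eta_{k,i} = H_i(D_{k,i})$, which is the defining choice of the coefficient of the corrugation at step $(k,i)$.

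Next, the crucial (and trivial) algebraic observation is that the $g_k$'s cancel in
\[
 -D_{k,i} + D_{k,i+1} \;=\; -(g_k - \pullback{f_{k,i-1}}) + (g_k - \pullback{f_{k,i}}) \;=\; \pullback{f_{k,i-1}} - \pullback{f_{k,i}}.
\]
Plugging this back into the previous expression gives exactly
\[
 Err_{k,i} \;=\; H_i(D_{k,i})\,\ell_i\otimes\ell_i \;-\; D_{k,i} \;+\; D_{k,i+1},
\]
which is the first identity of the lemma.

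For the second part I would apply $H_j$ with $j\neq i$ to both sides. Since $(H_i)_{i\in\{1,2,3\}}$ is by definition the dual basis of $(\ell_i\otimes\ell_i)_{i\in\{1,2,3\}}$, we have $H_j(\ell_i\otimes\ell_i)=0$ whenever $j\neq i$, so the first term vanishes and one is left with $H_j(Err_{k,i}) = H_j(D_{k,i+1}-D_{k,i})$, as claimed. There is no real obstacle here: the whole statement is just a compatibility identity between the way the correction $\eta_{k,i}\ell_i\otimes\ell_i$ is absorbed by the corrugation and the way the isometric default is updated from step $i$ to step $i+1$.
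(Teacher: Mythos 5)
Your proof is correct and follows essentially the same route as the paper: both start from the definition $Err_{k,i} = \mu_{k,i} - \pullback{f_{k,i}}$ and rearrange by inserting $g_k$ and the intermediate pullback to identify $-D_{k,i} + D_{k,i+1}$. Your derivation of the second identity via the dual-basis property $H_j(\ell_i\otimes\ell_i)=\delta_{ij}$ is exactly the intended (unstated) argument in the paper.
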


\begin{proof}
It is enough to decompose $Err_{k,i}$ as follows:
\begin{eqnarray*}
	Err_{k,i} &=& \mu_{k,i} -\pullback{f_{k,i}}\\
	&=& (\mu_{k,i} -\pullback{f_{k,i-1}}) -(g_k-\pullback{f_{k,i-1}}) + (g_k-\pullback{f_{k,i}})\\
	&=& H_i(D_{k,i})\ell_i\otimes \ell_i -D_{k,i} +D_{k,i+1},
\end{eqnarray*}
where the last line comes from the definition of $\mu_{k,i}$.
\end{proof}

\begin{lemme}\label{lem:minoration_eta}
With $H_{min}$ and $h_{max}$ as in~\eqref{eq:Hmin} and~\eqref{eq:hmax}, we have for all $p\in C$
\begin{eqnarray*}
\begin{array}{lll}
    H_1(D_{k,1}(p)) & \geq & H_{min}(D_{k,1})\vspace*{1mm}\\
    H_2(D_{k,2}(p)) & \geq & H_{min}(D_{k,1})-h_{max}. err_{k,1}\vspace*{1mm}\\
    H_3(D_{k,3}(p))& \geq & H_{min}(D_{k,1})-h_{max}.(err_{k,1}+err_{k,2}).
\end{array}
\end{eqnarray*}
\end{lemme}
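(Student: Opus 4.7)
The plan is to exploit the identity from Lemma~\ref{lem:Err} to telescope the quantity $H_i(D_{k,i})$ back to $H_i(D_{k,1})$, picking up error terms coming from the approximate character of each intermediate corrugation. These error terms will be bounded using the definition~\eqref{eq:hmax} of $h_{max}$.

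The first inequality is immediate from the definition~\eqref{eq:Hmin} of $H_{min}$, since $H_1(D_{k,1}(p)) \geq H_{min}(D_{k,1}(p)) \geq H_{min}(D_{k,1})$.

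For the second inequality, I would apply Lemma~\ref{lem:Err} with $i=1$, $j=2$ to obtain
\[
  H_2(D_{k,2}(p)) = H_2(D_{k,1}(p)) + H_2(Err_{k,1}(p)).
\]
The first summand is at least $H_{min}(D_{k,1})$ as above. The second summand satisfies $|H_2(Err_{k,1}(p))| \leq \|H_2\|\cdot\|Err_{k,1}(p)\| \leq h_{max}\cdot err_{k,1}$ by the operator norm inequality and the definitions of $h_{max}$ and $err_{k,1}$. The conclusion follows by the reverse triangle inequality.

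The third inequality proceeds analogously but with two telescoping steps: applying \eqref{eq:H_i-Err_i} successively for $(i,j)=(1,3)$ and $(i,j)=(2,3)$ yields
\[
  H_3(D_{k,3}(p)) = H_3(D_{k,1}(p)) + H_3(Err_{k,1}(p)) + H_3(Err_{k,2}(p)),
\]
and bounding each of the two error terms by $h_{max}\cdot err_{k,1}$ and $h_{max}\cdot err_{k,2}$ respectively gives the stated inequality. There is no real obstacle here; the lemma is a bookkeeping consequence of Lemma~\ref{lem:Err} together with the uniform bound $|H_j(B)|\leq h_{max}\|B\|$, and its purpose is simply to propagate the positivity of $H_i(D_{k,i})$ through the three corrugation substeps provided the $err_{k,i}$ are kept small enough (which is then ensured by choosing the corrugation numbers $N_{k,i}$ large, as exploited in the subsequent Lemma~\ref{lem:F_ki_well_defined}).
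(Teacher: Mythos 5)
Your proof is correct and follows essentially the same route as the paper's: telescope $H_i(D_{k,i})$ back to $H_i(D_{k,1})$ via the identity \eqref{eq:H_i-Err_i}, then bound the picked-up error terms with $|H_j(Err_{k,i})|\leq h_{max}\,err_{k,i}$. No meaningful differences.
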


\begin{proof}
The first inequality is trivial. For the second one, by definition of $h_{max}$ we have
$$\|H_2(Err_{k,1})\|_{C,\infty} \leq h_{max}.err_{k,1}.$$
By using~\eqref{eq:H_i-Err_i} of Lemma~\ref{lem:Err} with $j=2$ and $i=1$ we deduce for all $p\in C$
\begin{eqnarray*}
\begin{array}{lll} 
  H_2(D_{k,2}(p))  & \geq  & H_2(D_{k,1}(p)) - h_{max}. err_{k,1}\vspace*{1mm}\\
  & \geq & H_{min}(D_{k,1}) - h_{max}. err_{k,1}.
  \end{array}
\end{eqnarray*}
For the $\ell_3\otimes\ell_3$-component of $D_{k,3}$, we similarly have
\begin{eqnarray*}
H_3(Err_{k,1}) = H_3(D_{k,2}-D_{k,1}) \mbox{ and }
H_3(Err_{k,2}) = H_3(D_{k,3}-D_{k,2})
\end{eqnarray*}
and thus
\begin{eqnarray*}
    H_3(D_{k,3}(p)) &=& H_3(D_{k,1}(p))+ H_3(Err_{k,1}(p))+H_3(Err_{k,2}(p))\\
    & \geq & H_{3}(D_{k,1}(p)) - h_{max}.(err_{k,1}+err_{k,2})\\
    & \geq & H_{min}(D_{k,1}) - h_{max}.(err_{k,1}+err_{k,2})
\end{eqnarray*}
which is the third inequality of the lemma.
\end{proof}
\noi
The following lemma is needed to ensure that the difference $D_{k,4}=g_k-\pullback{f_{k,3}}$ is small enough so that the map $f_{k,3}$ is short for the next metric $g_{k+1}.$ 
\begin{lemme}
\label{lem:D_k4}
Let $C_H:=1 + h_{max}\|\ell_2^2\|+ h_{max}\|\ell_3^2\|.$ We have
$$\|D_{k,4}\|_{C,\infty} \leq C_H.(err_{k,1}+err_{k,2}+err_{k,3}).$$
\end{lemme}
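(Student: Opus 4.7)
The plan is to express $D_{k,4}$ as an explicit combination of the error terms $Err_{k,1}$, $Err_{k,2}$, $Err_{k,3}$ and then estimate each piece using the dual basis bound $|H_j(B)| \leq h_{max}\|B\|$.

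First I would unfold the definition recursively. From Lemma~\ref{lem:Err}, for each $i\in\{1,2,3\}$ we have
\[
    D_{k,i+1} = D_{k,i} - H_i(D_{k,i})\,\ell_i\otimes\ell_i + Err_{k,i}.
\]
Applying this three times starting from $D_{k,1}$ and using the expansion $D_{k,1} = \sum_{i=1}^3 H_i(D_{k,1})\,\ell_i\otimes\ell_i$ in the primitive basis, the $H_1(D_{k,1})\ell_1\otimes\ell_1$ term cancels and one is left with
\[
    D_{k,4} = \bigl(H_2(D_{k,1})-H_2(D_{k,2})\bigr)\ell_2\otimes\ell_2 + \bigl(H_3(D_{k,1})-H_3(D_{k,3})\bigr)\ell_3\otimes\ell_3 + Err_{k,1}+Err_{k,2}+Err_{k,3}.
\]

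Next I would use Equation~\eqref{eq:H_i-Err_i} of Lemma~\ref{lem:Err}: for $j\neq i$, $H_j(D_{k,i+1}-D_{k,i}) = H_j(Err_{k,i})$. Telescoping gives
\[
    H_2(D_{k,2})-H_2(D_{k,1}) = H_2(Err_{k,1}),\qquad H_3(D_{k,3})-H_3(D_{k,1}) = H_3(Err_{k,1})+H_3(Err_{k,2}),
\]
so the two leading scalar coefficients in the expression of $D_{k,4}$ are simply $-H_2(Err_{k,1})$ and $-H_3(Err_{k,1})-H_3(Err_{k,2})$.

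Finally I would take norms, applying $|H_j(B)|\le h_{max}\|B\|$ and the triangle inequality. Grouping terms by $err_{k,i}$ yields
\[
    \|D_{k,4}\|_{C,\infty} \le (1 + h_{max}\|\ell_2\otimes\ell_2\| + h_{max}\|\ell_3\otimes\ell_3\|)\,err_{k,1} + (1+h_{max}\|\ell_3\otimes\ell_3\|)\,err_{k,2} + err_{k,3},
\]
which is bounded by $C_H(err_{k,1}+err_{k,2}+err_{k,3})$. The only real bookkeeping obstacle is making sure the telescoping is done correctly and that the $H_1(D_{k,1})\ell_1\otimes\ell_1$ contribution is the only ``positive cone'' term that gets absorbed into the basis decomposition of $D_{k,1}$; once this algebraic identity is written cleanly, the norm estimate is immediate.
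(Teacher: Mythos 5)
Your proof is correct and follows essentially the same route as the paper: iterate the recurrence from Lemma~\ref{lem:Err} to write $D_{k,4} = \sum_{i} Err_{k,i} + \sum_i H_i(D_{k,1}-D_{k,i})\ell_i\otimes\ell_i$, bound the scalar coefficients via $|H_j(Err_{k,i})|\le h_{max}\,err_{k,i}$ using Equation~\eqref{eq:H_i-Err_i}, and conclude by the triangle inequality. You make the vanishing of the $\ell_1\otimes\ell_1$ term and the per-$i$ grouping slightly more explicit than the paper, but the argument is the same.
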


\begin{proof}
It follows from~\eqref{eq:H_i-Err_i} of Lemma~\ref{lem:Err} that
$$\left\{\begin{array}{lll}
 \di |H_2(D_{k,1}-D_{k,2})| & \leq & \di h_{max}.err_{k,1}\vspace*{1mm}\\ 
 \di |H_3(D_{k,1}-D_{k,3})| & \leq  & \di h_{max}.(err_{k,1}+err_{k,2}).
 \end{array}\right.$$
By Lemma \ref{lem:Err} we have for every $i\in\{1,2,3\}$
$$D_{k,i+1} = D_{k,i} + Err_{k,i} + H_i(D_{k,i}) \ell_i\otimes \ell_i$$
which implies that
$$D_{k,4} = D_{k,1}+ \sum_{i=1}^3 Err_{k,i} - \sum_{i=1}^3 H_i(D_{k,i})\ell_i\otimes \ell_i$$
hence
\begin{eqnarray*}
	\|D_{k,4}\|_{C,\infty}
	&=& \| \sum_{i=1}^3 Err_{k,i} + \sum_{i=1}^3 H_i(D_{k,1} -D_{k,i})\ell_i\otimes \ell_i \|_{C,\infty}\\
	&\leq & err_{k,1}+err_{k,2}+err_{k,3}\vspace{1mm}\\
	& & +h_{max}err_{k,1}\|\ell_{2}^2\|+h_{max} (err_{k,1}+err_{k,2})\|\ell_3^2\|\\
	&\leq & \left(1 + h_{max}\|\ell_2^2\|+ h_{max}\|\ell_3^2\| \right) (err_{k,1}+err_{k,2}+err_{k,3}).
\end{eqnarray*}
\end{proof}

\noi
Recall from Section~\ref{subsec:sequence_metrics_maps} that we have introduced a decreasing sequence of positive numbers $(\tau_k)_{k\in\N^*}$ with finite sum $\Tau$. These $\tau_k$ are helpful to guide the choice of the corrugation numbers. For a reason that will become clear in the sequel, we further assume that 
\begin{eqnarray}\label{eq:tau_k-bounded-lambda-f_0}
    \Tau\leq\frac{1}{2}\lambda_C(df_0) \quad \text{ where }\quad  \lambda_C(df_0):=\inf_{p\in C}\lambda(df_0(p)).
\end{eqnarray}
Recall that $f_0$ is the initial map and that $\lambda(df_0(p))$ is given by~\eqref{eq:lambda}. Note also that $f$ is an immersion if and only if $\lambda_C(df)>0$.

\begin{lemme}\label{lem:F_ki_well_defined} 
Under Assumption $(a)$ of Proposition~\ref{prop:3-corrugated-process-general}, we can choose at each step $(k,i)$ the corrugation number $N_{k,i}$ so that 
\begin{eqnarray}\label{eq:LC1}
    err_{k,i}\leq \di\min\left(\frac{H_{min}(D_{k,1})}{4h_{max}},\frac{H_{min} (g_{k+1} - g_k)}{6C_Hh_{max}}\right)
\end{eqnarray}
and 
\begin{eqnarray}\label{eq:LC3}
    \|df_{k,i}-L_{k,i}\|_{C,\infty} \leq\frac{\tau_k}{3},
\end{eqnarray}
where $L_{k,i} = L(f_{k,i},\eta_{k,i},\varpi_i,N_{k,i})$ is the target differential of Definition~\ref{def:target_differential}. For such a choice the sequence $(f_{k,i})_{k,i}$ is well-defined. 
\end{lemme}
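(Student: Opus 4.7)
The goal is to show, by induction on the pair $(k,i)$ ordered lexicographically, that at every stage the ingredients required to apply the corrugation process are available: namely that $f_{k,i-1}$ is an immersion (so that the corrugation frame from Section~\ref{subsec:corrugation-frame} is defined) and that the coefficient $\eta_{k,i} = H_i(D_{k,i})$ is strictly positive (so that the radius~$r$ in Corollary~\ref{cor:L-mu-isometric} is real and~$\alpha$ is defined). The strategy splits naturally into two parallel inductions: one controlling the positivity of the $\eta_{k,i}$'s via condition \eqref{eq:LC1}, and one controlling the immersion character via condition \eqref{eq:LC3}.

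\textbf{Immersion character.} Starting from $f_0$, I would iterate Lemma~\ref{lem:F-immersion}: at each well-defined stage one has
\[
  \lambda(df_{k,i}) \geq \lambda(df_0) - \sum_{(j,l)\leq (k,i)} \|df_{j,l} - L_{j,l}\|_{C,\infty}.
\]
By condition~\eqref{eq:LC3} each term in the sum is bounded by $\tau_{j}/3$ and summing over $l\in\{1,2,3\}$ and $j\leq k$ gives a total bounded by $\sum_{j\geq 1}\tau_j = \Tau$. Thanks to the standing assumption~\eqref{eq:tau_k-bounded-lambda-f_0} that $\Tau \leq \tfrac{1}{2}\lambda_C(df_0)$, we get $\lambda(df_{k,i}) \geq \tfrac{1}{2}\lambda_C(df_0) > 0$, so $f_{k,i}$ is indeed an immersion at each stage.

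\textbf{Positivity of the $\eta_{k,i}$'s.} The delicate part is ensuring that $D_{k,1}$ stays in the cone $\mathcal{C}$ for every~$k$, i.e. $H_{\min}(D_{k,1}) > 0$, since the positivity at the subsequent inner steps $i=2,3$ then follows directly from Lemma~\ref{lem:minoration_eta} combined with the first half of~\eqref{eq:LC1}: indeed
\[
  H_i(D_{k,i}) \geq H_{\min}(D_{k,1}) - 2 h_{\max}\,\max_{l<i} err_{k,l} \geq H_{\min}(D_{k,1})/2 > 0.
\]
For the induction on $k$, I would write
\[
  D_{k+1,1} = (g_{k+1}-g_k) + D_{k,4},
\]
where $D_{k,4}=g_k - \pullback{f_{k,3}}$. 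Applying $H_{\min}$ and using $|H_j(D_{k,4})| \leq h_{\max}\|D_{k,4}\|_{C,\infty}$, Lemma~\ref{lem:D_k4} yields
\[
  H_{\min}(D_{k+1,1}) \geq H_{\min}(g_{k+1}-g_k) - h_{\max}\, C_H \sum_{i=1}^3 err_{k,i}.
\]
The second half of condition~\eqref{eq:LC1} at step~$k$ bounds each $err_{k,i}$ by $H_{\min}(g_{k+1}-g_k)/(6 C_H h_{\max})$, so the subtracted term is at most $H_{\min}(g_{k+1}-g_k)/2$ and we conclude
\[
  H_{\min}(D_{k+1,1}) \geq \tfrac{1}{2} H_{\min}(g_{k+1}-g_k) > 0,
\]
the last strict inequality being guaranteed by hypothesis~\eqref{it:P5} together with the compactness of~$C$. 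The base case $k=1$ is immediate since $D_{1,1} = g_1 - g_0 \in \mathcal{C}$ by~\eqref{it:P5}.

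\textbf{Main obstacle and remark on feasibility.} The only real subtlety is the interlocking nature of the two induction steps: to know that $N_{k,i}$ can be chosen so that \eqref{eq:LC1} and \eqref{eq:LC3} hold, one needs $f_{k,i-1}$ to already be an immersion with $\eta_{k,i}>0$, which is precisely what one is trying to prove. This is resolved by carrying out the induction in lexicographic order on $(k,i)$: at each stage, the existence of $N_{k,i}$ making the left-hand sides of \eqref{eq:LC1}--\eqref{eq:LC3} as small as desired is guaranteed by Lemma~\ref{lem:propriete_CP} (since $err_{k,i}=O(1/N_{k,i})$ and $\|df_{k,i}-L_{k,i}\|_{C,\infty} = O(1/N_{k,i})$), and both right-hand sides are strictly positive by the inductive hypothesis already established for that stage.
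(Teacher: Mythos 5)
Your proposal is correct and follows essentially the same path as the paper's proof: the same three lemmas (Lemma~\ref{lem:F-immersion} with~\eqref{eq:LC3} for the immersion character, Lemma~\ref{lem:minoration_eta} with the first half of~\eqref{eq:LC1} for positivity of $\eta_{k,i}$ within a block, and Lemma~\ref{lem:D_k4} with the second half of~\eqref{eq:LC1} for propagating $H_{\min}(D_{k+1,1})>0$ across blocks), with the same crucial use of the standing assumption~\eqref{eq:tau_k-bounded-lambda-f_0}. The paper spells out the block $k=1$ in full and then appeals to induction; you organize it slightly more schematically as two parallel lexicographic inductions and explicitly flag the "interlocking" point, which is a presentational improvement rather than a mathematical difference.
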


\begin{proof}
We first assume $k=1.$ By assumption $(a)$ of Proposition~\ref{prop:3-corrugated-process-general}, we have $g_1-\pullback{f_0}\in C^{\infty}(C,\mathcal{C})$ which is equivalent to the fact that $H_{min}(D_{1,1})>0$. Hence $\eta_{1,1}=H_1(D_{1,1})$ is positive and since $f_0$ is an immersion, the  map $f_{1,1}$ is well-defined for any choice of $N_{1,1}$. Nevertheless, to apply the next corrugation process to $f_{1,1}$, we need to ensure that $f_{1,1}$ is an immersion. By Lemma~\ref{lem:propriete_CP}, we can choose $N_{1,1}$ such that~\eqref{eq:LC3} holds. Then by Lemma~\ref{lem:F-immersion}
$$\lambda_C(df_{1,1})\geq \lambda_C(df_{0})-\frac{\tau_1}{3}$$
and by~\eqref{eq:tau_k-bounded-lambda-f_0} we deduce that 
$f_{1,1}$ is an immersion. Moreover, still by Lemma~\ref{lem:propriete_CP}, we can choose $N_{1,1}$ such that~\eqref{eq:LC1} holds. Then by Lemma~\ref{lem:minoration_eta}, $H_2(D_{1,2})>0$. Thus $f_{1,2}$ is well-defined. We choose $N_{1,2}$ so that~\eqref{eq:LC3} holds. Then 
$$\lambda_C(df_{1,2})\geq \lambda_C(df_{1,1})-\frac{\tau_1}{3}\geq \lambda_C(df_{0})-\frac{2\tau_1}{3}$$
and by~\eqref{eq:tau_k-bounded-lambda-f_0},
$f_{1,2}$ is an immersion. If moreover $N_{1,2}$ is chosen so that~\eqref{eq:LC1} holds then $f_{1,3}$ is well-defined and $\lambda_C(df_{1,3})\geq \lambda_C(df_{0})-\tau_1>0$ by~\eqref{eq:tau_k-bounded-lambda-f_0}, thus $f_1:=f_{1,3}$ is an immersion. To continue the iteration, we need to ensure that $g_2-\pullback{f_1}\in C^{\infty}(C,\mathcal{C})$ which is equivalent to  $H_{min}(D_{2,1})>0$. If $N_{1,3}$ is chosen according to~\eqref{eq:LC1} then by Lemma~\ref{lem:D_k4}
\begin{eqnarray*}
    \|D_{1,4}\|_{C,\infty}\leq\frac{H_{min}(g_{2}-g_{1})}{2h_{max}}.
\end{eqnarray*}
Since $|H_i(D_{1,4}(p))|\leq h_{max} \|D_{1,4}(p)\| \leq h_{max} \|D_{1,4}\|_{C,\infty}$ for all $i\in\{1,2,3\}$ and all $p\in C$ we deduce
$$|H_i(D_{1,4})| \leq \frac{1}{2}H_{min}(g_{2}-g_{1}).$$
From the fact that $D_{2,1}=g_2-\pullback{f_{1,3}}=(g_2-g_1)+(g_1-\pullback{f_{1,3}} )$ we now have
\begin{eqnarray*}
	H_i(D_{2,1}) &=& H_i(g_{2}-g_{1}) + H_i(D_{1,4})\geq \frac{1}{2}H_{min}(g_{2}-g_{1}).
\end{eqnarray*}
From Assumption $(a)$ of Proposition~\ref{prop:3-corrugated-process-general}, we have $H_{min}(g_{2}-g_{1})>0$ which shows that $H_{min}(D_{2,1})>0.$ By an easy induction, we conclude that the $f_{k,i}$ are iteratively well defined, each of them being an immersion satisfying
\begin{eqnarray}
\label{eq:lambda-df_ki}
\lambda_C(df_{k,i})\geq \lambda_C(df_0)-\sum_{j=1}^k\tau_k\geq \frac{1}{2}\lambda_C(df_0).
\end{eqnarray}
\end{proof}

\subsection{End of proof of Proposition~\ref{prop:3-corrugated-process-general}}
\label{subsec:C0-C1-convergences}

\begin{lemme}[Property $P_1$]\label{lem:P1} 
Under Assumptions $(a)$ of Proposition~\ref{prop:3-corrugated-process-general}, if the corrugation numbers are chosen to satisfy~\eqref{eq:LC1} and~\eqref{eq:LC3} 
then Property \ref{it:P1} holds:
\[
    \forall k\in\N^*, \forall p\in C: \qquad\|g_k(p)-\pullback{f_k}(p)\|\leq\|g_{k+1}(p)-g_k(p)\|.
\]
\end{lemme}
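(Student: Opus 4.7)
The goal is the pointwise inequality $\|g_k(p) - \pullback{f_k}(p)\| \leq \|g_{k+1}(p) - g_k(p)\|$ on $C$. Since $f_k = f_{k,3}$, the left-hand side is exactly $\|D_{k,4}(p)\|$, the form already estimated in Lemma~\ref{lem:D_k4}. The plan is therefore to bound $\|D_{k,4}(p)\|$ from above using Lemma~\ref{lem:D_k4} together with the second term in the calibration~\eqref{eq:LC1} of the corrugation numbers, and to bound $\|g_{k+1}(p) - g_k(p)\|$ from below using the hypothesis~\eqref{it:P5} that $g_{k+1}-g_k$ lies in the positive cone $\mathcal{C}$.

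First, Lemma~\ref{lem:D_k4} gives $\|D_{k,4}\|_{C,\infty} \leq C_H(err_{k,1}+err_{k,2}+err_{k,3})$. The bound~\eqref{eq:LC1} forces each $err_{k,i}$ to be at most $H_{min}(g_{k+1}-g_k)/(6 C_H h_{max})$, so summing the three contributions cancels both the factor $C_H$ and the factor~$3$, yielding the clean uniform estimate
$$\|D_{k,4}\|_{C,\infty} \leq \frac{H_{min}(g_{k+1}-g_k)}{2 h_{max}}.$$
This is where the constant $6$ in~\eqref{eq:LC1} earns its keep.

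For the pointwise lower bound on $\|g_{k+1}(p)-g_k(p)\|$, I rely on the defining inequality $|H_i(B)| \leq h_{max}\|B\|$ (immediate from~\eqref{eq:hmax}), applied to $B = (g_{k+1}-g_k)(p)$. Hypothesis~\eqref{it:P5} makes each $H_i(g_{k+1}-g_k)(p)$ positive, so the absolute value can be dropped; taking the minimum over $i\in\{1,2,3\}$ and then passing to the global infimum in~\eqref{eq:Hmin} gives $\|g_{k+1}(p)-g_k(p)\| \geq H_{min}(g_{k+1}-g_k)/h_{max}$. Chaining the two estimates produces
$$\|D_{k,4}(p)\| \leq \|D_{k,4}\|_{C,\infty} \leq \frac{H_{min}(g_{k+1}-g_k)}{2 h_{max}} \leq \tfrac{1}{2}\|g_{k+1}(p)-g_k(p)\|,$$
which is even stronger than required.

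No step should be delicate. The only mild point of attention is that Lemma~\ref{lem:D_k4} supplies a \emph{uniform} bound on $D_{k,4}$ while the statement is \emph{pointwise}; this is harmless because the uniform bound is already controlled by a global infimum of a nonnegative quantity dominated pointwise by $\|g_{k+1}-g_k\|$. In short, \eqref{eq:LC1} appears to have been tuned precisely so that Property~\ref{it:P1} reduces to a direct substitution.
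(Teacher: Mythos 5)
Your proof is correct and follows essentially the same route as the paper: apply Lemma~\ref{lem:D_k4} together with the second branch of~\eqref{eq:LC1} to get $\|D_{k,4}\|_{C,\infty}\leq H_{\min}(g_{k+1}-g_k)/(2h_{\max})$, then use $H_{\min}(B)(p)\leq h_{\max}\|B(p)\|$ to compare with $\|g_{k+1}(p)-g_k(p)\|$. The only superfluous remark is the appeal to~\eqref{it:P5} to drop the absolute value: the chain $H_{\min}(B)(p)\leq H_1(B)(p)\leq |H_1(B)(p)|\leq h_{\max}\|B(p)\|$ works regardless of sign, so positivity is not needed at that particular step.
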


\begin{proof}
We use  Lemma~\ref{lem:D_k4} and the fact that the corrugation numbers are chosen accordingly to~\eqref{eq:LC1} to write for all $p$
\[
    \|D_{k,4}(p)\|\leq\frac{H_{min}(g_{k+1}-g_{k})(p)}{2h_{max}} \leq\frac{1}{2}\|g_{k+1}(p)-g_{k}(p)\|
  \]
where $H_{min}$ is defined in~\eqref{eq:Hmin}. Since $D_{k,4} = g_k-\pullback{f_k}$, the above inequality proves the lemma.
\end{proof}

\begin{lemme}[Property $P_3$]\label{lem:P3} 
Under Assumption $(a)$ and $(b)$ of Proposition~\ref{prop:3-corrugated-process-general}, if
in addition the corrugations numbers $N_{k,i}$ are chosen to satisfy~\eqref{eq:LC1} and \eqref{eq:LC3} then Property~\ref{it:P3} holds:
\[
    \forall k\in\N^*, \forall p\in C: 
    \|df_{k}(p)-df_{k-1}(p)\| \leq  \tau_k +{A}\, \|g_k(p)-\pullback{f_{k-1}}(p)\|^{1/2}.
\]
\end{lemme}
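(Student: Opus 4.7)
The plan is to decompose the difference telescopically and reduce the estimate to a pointwise bound on the coefficients $\eta_{k,i}$. First I would write
\[ df_k(p) - df_{k-1}(p) = \sum_{i=1}^{3} \bigl(df_{k,i}(p) - df_{k,i-1}(p)\bigr) \]
and apply the fourth inequality of Lemma~\ref{lem:propriete_CP} to each summand, namely
\[ \|df_{k,i}(p) - df_{k,i-1}(p)\| \leq \|df_{k,i}(p) - L_{k,i}(p)\| + \sqrt{7\,\eta_{k,i}(p)}\,\|d\varpi_i\|. \]
Condition~\eqref{eq:LC3} immediately absorbs the three terms $\|df_{k,i} - L_{k,i}\|$ into $\tau_k$, so the proof reduces to establishing
\[ \sum_{i=1}^3 \sqrt{\eta_{k,i}(p)}\,\|d\varpi_i\| \leq A'\,\|D_{k,1}(p)\|^{1/2} \]
for some constant $A'$ independent of $k$ and $p$, with $D_{k,1} = g_k - \pullback{f_{k-1}}$.

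To estimate $\eta_{k,i}(p)$ pointwise, I would iterate the identity $D_{k,i+1} = D_{k,i} - \eta_{k,i}\,\ell_i\otimes\ell_i + Err_{k,i}$ from Lemma~\ref{lem:Err} and exploit the dual-basis relation $H_j(\ell_i\otimes\ell_i) = \delta_{ij}$, which yields the compact formula
\[ \eta_{k,i}(p) = H_i(D_{k,1}(p)) + \sum_{j=1}^{i-1} H_i(Err_{k,j}(p)). \]
The first term is bounded directly by $h_{max}\,\|D_{k,1}(p)\|$. For the error terms, hypothesis~\eqref{eq:LC1} gives $h_{max}\,err_{k,j} \leq H_{min}(D_{k,1})/4$, and the pointwise chain
\[ H_{min}(D_{k,1}) \leq H_1(D_{k,1}(p)) \leq h_{max}\,\|D_{k,1}(p)\|, \]
valid at every $p\in C$ because $D_{k,1}$ lies in the positive cone $\mathcal{C}$ and each $H_j$ has operator norm at most $h_{max}$, upgrades this to the pointwise inequality $err_{k,j} \leq \|D_{k,1}(p)\|/4$. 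Hence $\eta_{k,i}(p) \leq C\,\|D_{k,1}(p)\|$ with an explicit constant $C$ depending only on $h_{max}$, and the desired estimate follows with $A := 3\sqrt{7\,C}\,\max_i \|d\varpi_i\|$.

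The main (and essentially only) obstacle is the pointwise-versus-global tension between the target inequality~\ref{it:P3}, which is required at every point $p$, and hypothesis~\eqref{eq:LC1}, which controls $err_{k,j}$ only through the global $\|\cdot\|_{C,\infty}$-norm. This tension is resolved precisely by the observation that, thanks to the positivity of $D_{k,1}$, the global infimum $H_{min}(D_{k,1})$ is dominated pointwise by $h_{max}\,\|D_{k,1}(p)\|$ at every $p$. Once this observation is in place the rest is a routine bookkeeping exercise, and the resulting constant $A$ depends only on the three fixed linear forms $\ell_1, \ell_2, \ell_3$.
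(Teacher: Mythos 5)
Your proof is correct and follows essentially the same route as the paper: telescopic decomposition, the fourth inequality of Lemma~\ref{lem:propriete_CP}, absorption of $\|df_{k,i}-L_{k,i}\|$ via \eqref{eq:LC3}, and a bound $\eta_{k,i}(p)\leq C\,\|D_{k,1}(p)\|$ obtained from $\eta_{k,i}=H_i(D_{k,i})=H_i(D_{k,1})+\sum_{j<i}H_i(Err_{k,j})$ together with \eqref{eq:LC1}. The one point where you add value is in making explicit the ``pointwise versus global'' step $H_{min}(D_{k,1})\leq H_i(D_{k,1}(p))\leq h_{max}\|D_{k,1}(p)\|$, which the paper uses silently when passing from $\tfrac14 H_{min}(D_{k,1})+h_{max}\|D_{k,1}(p)\|$ to $\tfrac54 h_{max}\|D_{k,1}(p)\|$; spelling out that this rests on the positivity of $D_{k,1}$ is a genuine clarification, even if it does not change the substance of the argument.
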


\begin{proof}
From Lemma~\ref{lem:propriete_CP} and condition~\eqref{eq:LC3} we deduce for all $p\in C$:
$$\|df_{k}(p)-df_{k-1}(p)\| \leq \sum_{i=1}^3 \|df_{k,i}(p)-df_{k,i-1}(p)\| \leq \tau_k+\sqrt{7}\sum_{i=1}^3\|\sqrt{H_i(D_{k,i}(p))}\ell_i\|.$$
By Lemma~\ref{lem:Err} we have, omitting the variable $p$,
$$\left\{\begin{array}{lll}
 H_2(D_{k,2}) & = & H_2(Err_{k,1})+H_2(D_{k,1})\vspace*{1mm}\\
 H_3(D_{k,3}) & = & H_3(Err_{k,2})+H_3(Err_{k,1})+H_3(D_{k,1})\\
  \end{array}\right.$$
thus
$$\left\{\begin{array}{lll}
 |H_2(D_{k,2})| & \leq & h_{max}err_{k,1}+h_{max}\|D_{k,1}\|\vspace*{1mm}\\
 |H_3(D_{k,3})| & \leq & h_{max}(err_{k,2}+err_{k,1})+h_{max}\|D_{k,1}\|.\\
  \end{array}\right.$$
By condition~\eqref{eq:LC1} we obtain
$$|H_2(D_{k,2})|\leq \frac{1}{4}H_{min}(D_{k,1})+h_{max}\|D_{k,1}\| \leq\frac{5}{4}h_{max}\|D_{k,1}\|$$
and similarly $|H_3(D_{k,3})|\leq \frac{3}{2}h_{max}\|D_{k,1}\|.$ Finally,
$$\sum_{i=1}^3\sqrt{7}\|\sqrt{H_i(D_{k,i})}\ell_i\|\leq {A}\|D_{k,1}\|^{1/2}$$
for some constant ${A}$ that only depends on $h_{max}$ and the norms $\|\ell_i\|$ of the linear forms $(\ell_i)_{i\in\{1,2,3\}}$. This concludes the proof of the lemma. 
\end{proof}

\begin{proof}[Proof of Proposition~\ref{prop:3-corrugated-process-general}]
Lemma~\ref{lem:F_ki_well_defined} shows the existence part.  Lemmas~\ref{lem:P1} and~\ref{lem:P3} show that Properties~\ref{it:P1} and~\ref{it:P3} hold provided that the corrugation numbers are chosen to satisfy~\eqref{eq:LC1}, \eqref{eq:LC3}.  By Lemma~\ref{lem:propriete_CP}, we can further choose the corrugation numbers $N_{k,i}$ so that 
\begin{eqnarray}\label{eq:LC2}
    \|f_{k,i}-f_{k,i-1}\|_{C,\infty} \leq\frac{\tau_k}{3}
\end{eqnarray}
and such a choice ensures Property $(P_2)$. It then remains to apply Proposition~\ref{prop:Nash-Kuiper} to conclude.
\end{proof}

\section{Proofs of Theorem~\ref{thm:H2} and Proposition~\ref{prop:3-corrugated}}
\label{sec:proof-thm-1}
In this section, we choose the three affine projections $\varpi_i$, $i\in\{1,2,3\}$, the initial embedding $f_0$ and the sequence of metrics $(g_k)_k$ to apply Proposition~\ref{prop:3-corrugated-process-general} in order to obtain a $3$-corrugated embedding $f_{\infty}$ satisfying the statement of Theorem~\ref{thm:H2}.

\subsection{The wavefront forms}\label{subsec:corrugation_forms}
Let $a\in\frac{1}{2\pi}\Z^*$. We consider the three projections $\varpi_i:\widetilde{C}\to\R$ defined by
\begin{eqnarray}\label{eq:def-varpi}
\varpi_1(\rho,\varphi) := -\rho,\quad
\varpi_2(\rho,\varphi) := \rho-a\varphi \quad \mbox{and} \quad
\varpi_3(\rho,\varphi) := \rho+a\varphi
\end{eqnarray}
We use the circular convention $\varpi_0=\varpi_3$. Observe that the condition $a\in\frac{1}{2\pi}\Z^*$ is necessary for these projections to pass to the quotient $C$, see Lemma~\ref{lem:CP-quotient}.
\begin{figure}[ht]
\centering
	\includegraphics[width = 0.75\textwidth]{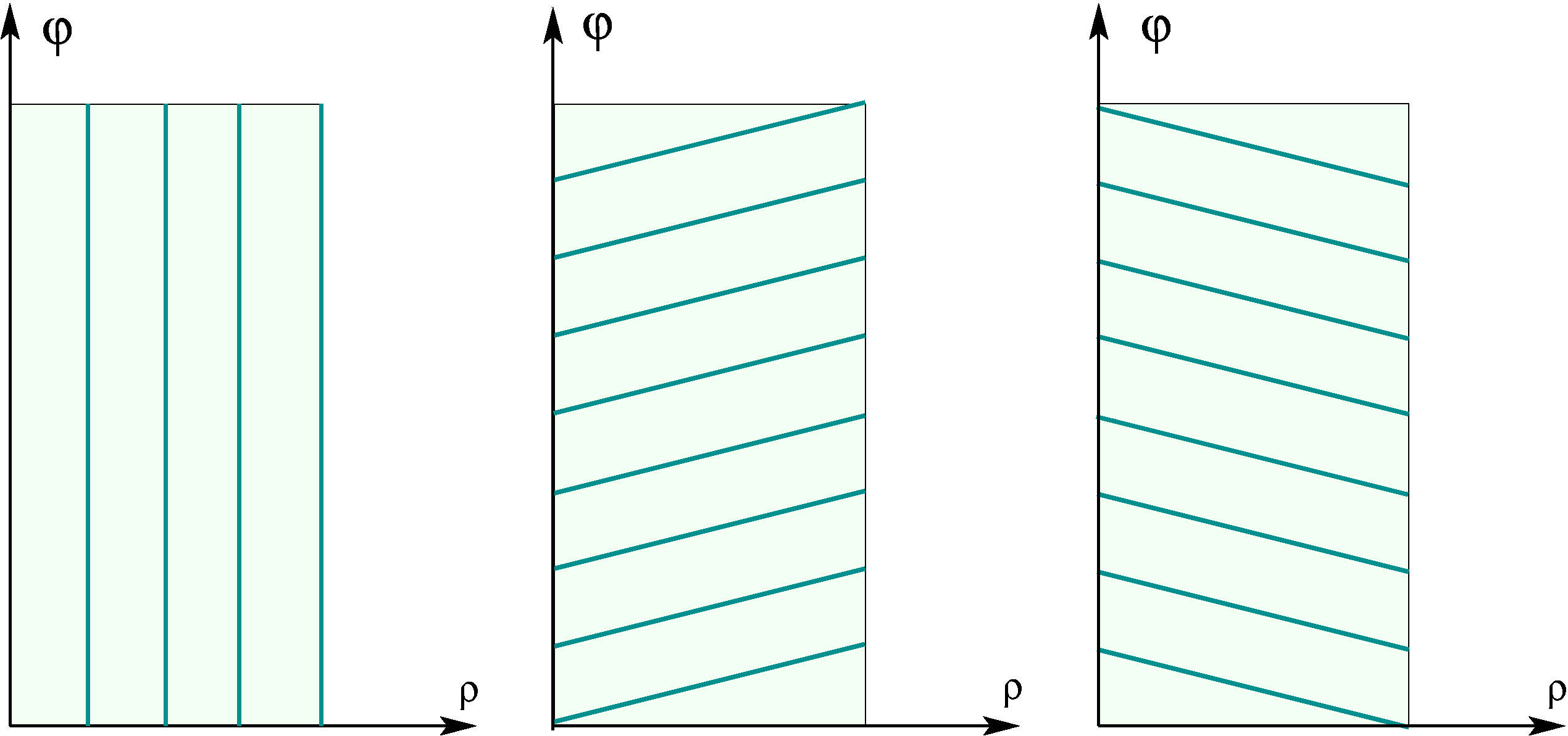}
	\caption{Level lines of the projections $\varpi_i$ in $C_0$ with $i=1$ (left), $i=2$ (middle) and $i=3$ (right).}
\end{figure}
We denote by $\ell_i=d\varpi_i$ the linear forms: 
\begin{eqnarray*}
	\ell_1 := -d\rho, \quad \ell_2 := d\rho - a d\varphi, \quad	\ell_0 = \ell_3 := d\rho + a d\varphi.
\end{eqnarray*}
\begin{figure}[ht]
    \centering
	\includegraphics[width = 0.8\textwidth]{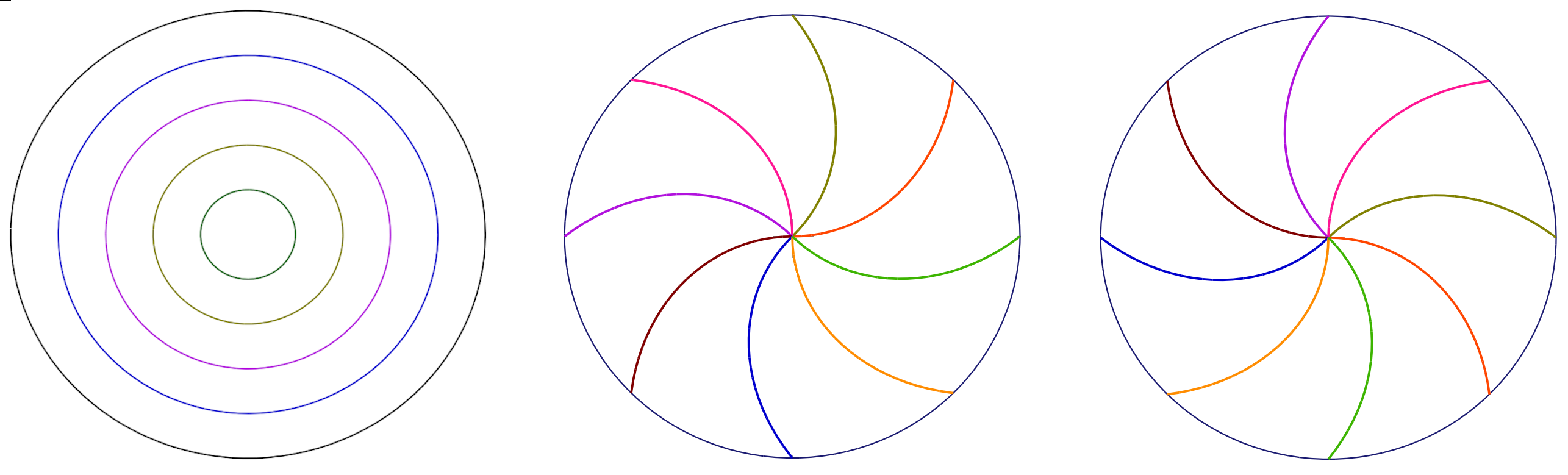}
	\caption{Wavefront curves $\{p\in D^2,\varpi_i(p)=Cte\}$ for $i=1$ (left), $i=2$ (middle) and $i=3$ (right). Their images by $f_{\infty}$ correspond to the wavefronts of the different layers of corrugations, see Figure 1. }
\end{figure}

\subsection{The initial embedding.}\label{subsec:initial-map}
We choose as initial embedding the map $f_0:C\to\E^3$ defined by
\begin{eqnarray}
\label{eq:f_0}
    f_0(\rho,\varphi) = 2\left( \rho\cos\varphi, \rho\sin\varphi, \frac{\sqrt{2}}{2} \rho^2 \right).
\end{eqnarray}
The analytic expression of its isometric default $\Delta$ to the Poincaré metric $h$ is given by
$$\Delta:=h - \pullback{f_0}=4\left(\frac{1}{(1-\rho^2)^2}-1-2\rho^2\right)d\rho^2+4\rho^2\left(\frac{1}{(1-\rho^2)^2}-1\right)d\varphi^2.$$
It is readily checked that $\Delta$ is a metric on every point of $C^*$. This shows that $h>\pullback{f_0}$, i.e. $f_0$ is a strictly short embedding. 
\begin{lemme}
\label{lem:parameter-a}
Let $a=\di\frac{n}{2\pi}$ where $n\geq 7$ is an integer. Then $\Delta\in C^{\infty}(C^*,\mathcal{C})$, where $\mathcal{C}$  is the positive cone defined in~\eqref{eq:cone-C}.
\end{lemme}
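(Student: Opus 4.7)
The plan is to decompose $\Delta$ in the basis $(\ell_i\otimes\ell_i)_{i\in\{1,2,3\}}$ of $\mathcal{S}_2(\R^2)$ and verify that each of the three coefficients is smooth and strictly positive on $C^*$. First I would compute the bilinear forms explicitly:
\begin{align*}
\ell_1\otimes\ell_1 &= d\rho\otimes d\rho,\\
\ell_2\otimes\ell_2 + \ell_3\otimes\ell_3 &= 2\,d\rho\otimes d\rho + 2a^2\, d\varphi\otimes d\varphi,\\
\ell_3\otimes\ell_3 - \ell_2\otimes\ell_2 &= 2a\,(d\rho\otimes d\varphi + d\varphi\otimes d\rho).
\end{align*}
Writing $\Delta = A(\rho)\,d\rho\otimes d\rho + B(\rho)\,d\varphi\otimes d\varphi$ with $A(\rho) = 4[(1-\rho^2)^{-2} - 1 - 2\rho^2]$ and $B(\rho) = 4\rho^2[(1-\rho^2)^{-2} - 1]$, the vanishing of the $d\rho\otimes d\varphi$ component of $\Delta$ forces $\eta_2 = \eta_3$, and the remaining $2\times 2$ system then yields $\eta_2 = \eta_3 = B/(2a^2)$ and $\eta_1 = A - B/a^2$. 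Smoothness on $C^*$ is immediate since all three coefficients are rational combinations of smooth functions of $\rho\in[\rho_0,1)$.

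Positivity of $\eta_2 = \eta_3$ is straightforward: since $\rho\in[\rho_0,1)$ one has $(1-\rho^2)^{-2} > 1$, so $B > 0$ on $C^*$.

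The main obstacle, and the step where the bound $n\geq 7$ enters, is the positivity of $\eta_1$. I would use the identity $(1-\rho^2)^{-2} - 1 = \rho^2(2-\rho^2)/(1-\rho^2)^2$ to rewrite
\[
\frac{(1-\rho^2)^2}{4\rho^2}\,\eta_1 = \Bigl(1 - \frac{\rho^2}{a^2}\Bigr)(2-\rho^2) - 2(1-\rho^2)^2.
\]
Setting $x := \rho^2 \in [\rho_0^2,1)$ and expanding, the right-hand side factors as
\[
g(x) = x\bigl[(3 - 2/a^2) - (2 - 1/a^2)x\bigr].
\]
For $x\in(0,1)$ the function $g$ is strictly positive iff $(3-2/a^2) \geq (2-1/a^2)$, equivalently $a\geq 1$; the limiting value $g(1) = 1 - 1/a^2$ confirms that strict positivity as $\rho\to 1^-$ requires $a>1$. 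Since $a = n/(2\pi)$ with $n\in\Z$ and $2\pi < 7$, this threshold is achieved precisely for $n\geq 7$, giving $\eta_1 > 0$ on $C^*$ and completing the proof that $\Delta\in C^\infty(C^*,\mathcal{C})$.
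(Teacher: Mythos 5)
Your proof is correct and follows essentially the same route as the paper: decompose $\Delta$ in the basis $(\ell_i\otimes\ell_i)_i$, observe $\eta_2=\eta_3=B/(2a^2)>0$ is automatic, and reduce the positivity of $\eta_1=A-B/a^2$ to an inequality in $\rho$ with threshold $a=1$. Your factorization $g(x)=x[(3-2/a^2)-(2-1/a^2)x]$ after the substitution $x=\rho^2$ is just a rearrangement of the paper's criterion $a^2>(2-\rho^2)/(3-2\rho^2)$, so the two arguments are algebraically identical.
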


\begin{proof}
Let
$$B=Ed\rho^2+F(d\rho\otimes d\varphi + d\varphi \otimes d\rho) + Gd\varphi^2\in\mathcal{S}_2(\R^2)$$ 
be a symmetric bilinear form and let $H_1(B)$, $H_2(B)$ and $H_3(B)$ its coefficients in the basis $(\ell_i\otimes\ell_i)_{i\in\{1,2,3\}}$. A straightforward computation leads to 
\begin{align}
  H_1(B)=E - \frac{G}{a^2},\quad H_2(B)=\frac{G}{2a^2} - \frac{F}{2a}\quad\mbox{ and }\quad H_3(B)=\frac{G}{2a^2} + \frac{F}{2a}. \label{eq:coeff_B}
\end{align}
In particular, the values of the three linear forms $H_i:\mathcal{S}_2(\R^2)\to\R$, $i\in\{1,2,3\},$ at the isometric default $\Delta$ are
$$H_2(\Delta)=H_3(\Delta)=\frac{2\rho^2}{a^2}\left(\frac{1}{(1-\rho^2)^2}-1\right)>0$$
and
$$H_1(\Delta)=\frac{a^2(12\rho^4-8\rho^6)-8\rho^4+4\rho^6}{a^2(1-\rho^2)^2}.$$
The  number $H_1(\Delta)$ is positive if and only if its numerator is positive that is
$$a^2>\frac{2-\rho^2}{3-2\rho^2}.$$
The function $\rho\mapsto \frac{2-\rho^2}{3-2\rho^2}$ is increasing for $\rho\in\, ]0,1]$ and its maximum is 1. We deduce that $H_1(\Delta)>0$ for every $\rho\in\, ]0,1]$ if and only if $a>1.$ 
\end{proof}

\paragraph{Choice of the parameter $a$.} We choose $a=\frac{7}{2\pi}$. 

\subsection{The sequence of metrics}\label{subsec:sequence-of-metrics-gk}
Just like the metric $h$, the isometric default $\Delta$ blows up at $\rho=1$. To build an increasing sequence of metrics $(g_k)_k$ converging toward $h$ while remaining bounded on $C$, we consider the Taylor series of $\Delta$  for the variable $\rho$. We then add truncations of this series to the metric $\pullback{f_0}$. The coefficients of the resulting metrics being polynomial in $\rho$, they extend to the boundary $\rho=1.$ 
In more details, the Taylor series of the $E$ and $G$ coefficients of the isometric default $\Delta$ are
$$E(\Delta)=4\sum_{n=1}^{\infty}(n+2)\rho^{2(n+1)}\quad\mbox{and}\quad G(\Delta) =   4\sum_{n=1}^{\infty}(n+1)\rho^{2(n+1)}.$$ 
For every $k\in\N^*$ we consider the truncations
$$\delta_k^E(\rho):=   4\sum_{n=1}^{k}(n+2)\rho^{2(n+1)}\quad\mbox{and}\quad \delta_k^G(\rho)  :=   4\sum_{n=1}^{k}(n+1)\rho^{2(n+1)}$$
and define a sequence of metrics by setting 
\begin{eqnarray}
\label{eq:def-g_k}
g_k:=\pullback{f_0}+\Delta_k\quad\mbox{where}\quad \Delta_k:=\delta_k^E(\rho)d\rho^2+\delta_k^G(\rho)d\varphi^2.
\end{eqnarray}
Note that $\Delta_0=0$ and $g_0=\pullback{f_0}.$ 
Obviously $g_k\uparrow h$ and each metric $g_k$ is bounded on $C$. 

\begin{lemme}[Property $P_5$]
\label{lem:difference-gk} 
The sequence $(g_k)_k$ of metrics defined by~\eqref{eq:def-g_k} satisfies 
\[\forall k\in\N^*,\qquad g_{k}-g_{k-1}\in C^{\infty}(C,\mathcal{C}).\] 
\end{lemme}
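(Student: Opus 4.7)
The plan is to compute $g_{k}-g_{k-1}$ explicitly and then show that it lies pointwise in the positive cone $\mathcal{C}$ by checking the sign of each of its three coordinates in the dual basis, using the explicit formulas~\eqref{eq:coeff_B} derived in Lemma~\ref{lem:parameter-a}.

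First I would simply subtract the two truncated series in~\eqref{eq:def-g_k}. Since $\delta_k^E(\rho)-\delta_{k-1}^E(\rho)=4(k+2)\rho^{2(k+1)}$ and $\delta_k^G(\rho)-\delta_{k-1}^G(\rho)=4(k+1)\rho^{2(k+1)}$, one gets
\[
  g_k - g_{k-1} = 4(k+2)\rho^{2(k+1)}\,d\rho^2 + 4(k+1)\rho^{2(k+1)}\,d\varphi^2.
\]
This already proves the $C^{\infty}$ regularity on $C$, since the coefficients are polynomials in $\rho$ extending smoothly up to $\rho=1$. It remains to verify that $g_k-g_{k-1}(p)\in\mathcal{C}$ for every $p\in C$.

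I would next apply~\eqref{eq:coeff_B} with $E=4(k+2)\rho^{2(k+1)}$, $F=0$, $G=4(k+1)\rho^{2(k+1)}$. This gives
\[
  H_2(g_k-g_{k-1}) = H_3(g_k-g_{k-1}) = \frac{2(k+1)}{a^2}\rho^{2(k+1)} > 0
\]
on $C$, while
\[
  H_1(g_k-g_{k-1}) = 4\rho^{2(k+1)}\left((k+2)-\frac{k+1}{a^2}\right).
\]
Since $a=7/(2\pi)>1$, we have $a^2>1>\frac{k+1}{k+2}$ for every $k\geq 1$, so $(k+2)-(k+1)/a^2>0$ and $H_1(g_k-g_{k-1})>0$ on $C$. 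All three coefficients being strictly positive, $g_k-g_{k-1}$ takes values in the open cone $\mathcal{C}$ and the lemma follows.

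There is no real obstacle here: once the telescoping of the truncated series is performed, everything reduces to the scalar inequality $a^2>1$, which is exactly the choice of the parameter~$a$ made just before the statement. In particular the argument mirrors the one of Lemma~\ref{lem:parameter-a}, but is even simpler because the coefficient $F$ vanishes and the ratio $(k+1)/(k+2)$ stays uniformly below $1$.
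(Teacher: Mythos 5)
Your proof is correct and follows essentially the same route as the paper: telescope the truncated series to get the explicit $d\rho^2$ and $d\varphi^2$ coefficients of $g_k-g_{k-1}$, apply the formulas~\eqref{eq:coeff_B} from Lemma~\ref{lem:parameter-a} with $F=0$, and conclude positivity of all three dual coordinates from $a>1$. The numerical values you obtain for $H_1,H_2,H_3$ agree with the paper's, and the smoothness on $C$ (including $\rho=1$) follows as you note from the coefficients being polynomials in $\rho$.
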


\begin{proof} 
A direct computation using~\eqref{eq:coeff_B} leads to
$$H_2(g_{k}-g_{k-1})=H_3(g_{k}-g_{k-1})=4\rho^{2(k+1)}\frac{k+1}{2a^2}>0$$
and
$$H_1(g_{k}-g_{k-1})=4\rho^{2(k+1)}\left(k+2-\frac{k+1}{a^2}\right)>0$$
because $a>1.$ 
\end{proof}

\noi
For a metric $g$ we denote by $\|g\|_{X,\infty}$ the supremum of $\|g(\rho,\varphi)\|$ on $X\subset C.$ The following lemma will be needed later to deduce that the sequence $(f_k)_k$ is $C^1$-converging over each compact set of $C^*$ that is over $C^*.$

\begin{lemme}[Property $P_4$]\label{lem:asymptotic-behavior-gk}
The sequence $(g_k)_k$ of metrics defined by~\eqref{eq:def-g_k} satisfies
\[\sum_{k=1}^{+\infty} \| g_k - g_{k-1} \|_{K,\infty}^{1/2} < +\infty.
\]
for all compact set $K\subset C^*.$
\end{lemme}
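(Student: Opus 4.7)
The plan is to exploit the explicit polynomial form of $g_k - g_{k-1}$ together with the fact that any compact $K \subset C^*$ is bounded away from the boundary circle $\{\rho = 1\}$.

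First I would compute the difference term by term from the definitions in~\eqref{eq:def-g_k}: since $\delta_k^E$ and $\delta_k^G$ are truncated power series that differ from the preceding truncation by a single monomial, I obtain
\[
    g_k - g_{k-1} = 4(k+2)\rho^{2(k+1)} d\rho^2 + 4(k+1)\rho^{2(k+1)} d\varphi^2.
\]
This is a diagonal symmetric bilinear form in the basis $(d\rho, d\varphi)$, so its operator norm on $\R^2$ (with the Euclidean inner product used in the definition of $\|\cdot\|$ just after \ref{it:P4}) is simply the larger of the two diagonal entries, namely $4(k+2)\rho^{2(k+1)}$.

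Next, since $K \subset C^* = C \setminus (\{1\}\times \R/2\pi\Z)$ is compact and the first coordinate $\rho$ is continuous, I would set $\rho_K := \max\{\rho : (\rho,\varphi) \in K\} < 1$. This gives the pointwise bound
\[
    \|g_k - g_{k-1}\|_{K,\infty} \leq 4(k+2)\rho_K^{2(k+1)},
\]
and therefore
\[
    \|g_k - g_{k-1}\|_{K,\infty}^{1/2} \leq 2\sqrt{k+2}\,\rho_K^{k+1}.
\]

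Finally, convergence of the series $\sum_k 2\sqrt{k+2}\,\rho_K^{k+1}$ is immediate from the root test (or the ratio test), because $\rho_K < 1$ dominates the sub-exponential factor $\sqrt{k+2}$. There is no real obstacle here: the only point that deserves care is noticing that the compactness of $K$ inside $C^*$ (not just in $C$) is exactly what allows one to extract a strict bound $\rho_K < 1$; this is precisely why Property~\ref{it:P4} is formulated over compacts of $C^*$ rather than over the whole annulus $C$, since on the boundary circle $\{\rho=1\}$ the geometric series would diverge.
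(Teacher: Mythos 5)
Your proof is correct and follows essentially the same route as the paper: both extract a bound $\rho_K < 1$ from the compactness of $K$ in $C^*$, bound $\|g_k - g_{k-1}\|_{K,\infty}$ by $4(k+2)\rho_K^{2(k+1)}$, and conclude by geometric domination of the sub-exponential factor $\sqrt{k+2}$. The only difference is that you compute the operator norm of the diagonal difference exactly, whereas the paper just writes the bound $4b^{2k+2}(k+2)\|\eucl\|$; the substance is the same.
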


\begin{proof}
Let $b<1$ be the radius of a disk centered at the origin that contains $K$ and let $\eucl:=d\rho\otimes d\rho+d\varphi\otimes d\varphi$ be the Euclidean metric on $C.$ We have 
\begin{eqnarray*} 
\|g_k-g_{k-1}\|_{K,\infty}\leq 4b^{2k+2}(k+2)\|\eucl\|
\end{eqnarray*}
and the resultat follows from the fact that 
$\sum\sqrt{k+2}\,b^{k+1}<+\infty.$
\end{proof}

\subsection{Existence and regularity of the limit map}

\begin{prop}
\label{prop:limit-map}
Let $f_0$ be the embedding defined by~\eqref{eq:f_0}, $\varpi_{1},\varpi_{2},\varpi_3$ be the affine projections defined by~\eqref{eq:def-varpi} and $(g_k)_k$ be the sequence of metrics defined by~\eqref{eq:def-g_k}. If the corrugation numbers $N_{k,i}$ are large enough then the 3-corrugated process
$$f_{k,i}=CP_i(f_{k,i},g_k,N_{k,i}),\qquad i\in\{1,2,3\},$$
is well-defined and its limit map $f_{\infty}$ is continuous on $C$ and a $C^1$ $h$-isometric immersion on $C^*.$
\end{prop}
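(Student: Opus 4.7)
The plan is to check that the data $(f_0,\varpi_1,\varpi_2,\varpi_3,(g_k)_k)$ of Section~\ref{sec:proof-thm-1} fulfills the hypotheses of Proposition~\ref{prop:3-corrugated-process-general}, and to conclude by invoking that proposition. Essentially all the technical work has already been packaged in earlier lemmas; what remains is a verification that the ingredients fit.

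First I would check the preliminaries. The map $f_0(\rho,\varphi)=2(\rho\cos\varphi,\rho\sin\varphi,\tfrac{\sqrt2}{2}\rho^2)$ is smooth on $C$, descends to the quotient, and is an immersion since $\partial_\rho f_0$ and $\partial_\varphi f_0$ remain linearly independent for $\rho>0$. The three affine projections $\varpi_1,\varpi_2,\varpi_3$ satisfy the hypothesis of Lemma~\ref{lem:CP-quotient} since $a=7/(2\pi)\in\frac{1}{2\pi}\Z^*$, and the family $(\ell_i\otimes\ell_i)_{i\in\{1,2,3\}}$ is a basis of $\mathcal S_2(\R^2)$ (direct consequence of the explicit formulas~\eqref{eq:coeff_B} for $H_1,H_2,H_3$). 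Moreover, by construction, $g_0=\pullback{f_0}+\Delta_0=\pullback{f_0}$, the $\Delta_k$ are increasing in $k$ since their Taylor coefficients are nonnegative, and $g_k\uparrow h$ pointwise on $C^*$ because we are adding the missing tail of the Taylor series of the isometric default~$\Delta$.

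Next I would check assumption~$(i)$ of Proposition~\ref{prop:3-corrugated-process-general}. Property~\ref{it:P4} on compact subsets of $C^*$ follows directly from Lemma~\ref{lem:asymptotic-behavior-gk}, and Property~\eqref{it:P5}, that is $g_k-g_{k-1}\in C^{\infty}(C,\mathcal{C})$ for all $k\geq 1$, is exactly the statement of Lemma~\ref{lem:difference-gk}. Since $g_0=\pullback{f_0}$, the combination of~\eqref{it:P5} for $k=1$ with the identity $g_1-\pullback{f_0}=\Delta_1$ also gives the initial short condition $g_1-\pullback{f_0}\in\mathcal{C}$, which is what is needed at the very first step of the iteration.

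Assumption~$(ii)$ of Proposition~\ref{prop:3-corrugated-process-general} is precisely the freedom to choose the corrugation numbers $N_{k,i}$ large enough; this is inherited and no additional work is required here. Applying Proposition~\ref{prop:3-corrugated-process-general} then yields that the iterated 3-corrugated process $f_{k,i}=CP_i(f_{k,i-1},g_k,N_{k,i})$ is well-defined, that $(f_{k,i})_{k,i}$ converges in $C^0$ on $C$ to a continuous 3-corrugated limit $f_\infty$, and that the convergence upgrades to $C^1$ on $C^*$ with $f_\infty$ an $h$-isometric immersion there. This is exactly the statement of Proposition~\ref{prop:limit-map}.

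There is no real obstacle in this proof: the only substantive verifications are the positivity of the $H_i$-coefficients of the Taylor increments, which has been handled in Lemma~\ref{lem:difference-gk} thanks to the choice $a>1$, and the summability of $\|g_k-g_{k-1}\|_{K,\infty}^{1/2}$, settled by Lemma~\ref{lem:asymptotic-behavior-gk} via the geometric decay in $\rho^{2k}$ on any set bounded away from $\rho=1$. The delicate issues of regularity at the boundary $\{\rho=1\}$ and of the embedded character are deferred to later sections (Hölder regularity via Proposition~\ref{prop:Holder-general} and the embedding property via Lemma~\ref{lem:F-embedding}) and lie outside the present statement.
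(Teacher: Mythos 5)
Your proof is correct and follows essentially the same route as the paper: the paper's proof simply invokes Lemmas~\ref{lem:asymptotic-behavior-gk} and~\ref{lem:difference-gk} to establish \ref{it:P4} and \eqref{it:P5}, then applies Proposition~\ref{prop:3-corrugated-process-general}. You spell out the preliminary verifications (that $f_0$ is an immersion, that $a\in\frac{1}{2\pi}\Z^*$, that $(\ell_i\otimes\ell_i)_i$ is a basis, and monotone convergence of $(g_k)$) which the paper leaves implicit, but the argument is the same.
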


\begin{proof} Lemmas~\ref{lem:asymptotic-behavior-gk} and~\ref{lem:difference-gk} show that \ref{it:P4} and \eqref{it:P5} hold. It is then enough to apply Proposition~\ref{prop:3-corrugated-process-general} to obtain Proposition~\ref{prop:limit-map}. 
\end{proof}

\begin{lemme}[Hölder regularity]
\label{lem:holder-limit-map}
If, in addition to the assumptions of Proposition~\ref{prop:limit-map}, the sequence $(\tau_k)_k$ is chosen such that, from some $k_0>0$, we have
$$\forall k\geq k_0,\qquad\tau_k\leq e^{-k}$$
then the limit map $f_{\infty}$ is $\beta$-Hölder for any $0<\beta<1.$
\end{lemme}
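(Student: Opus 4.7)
The plan is to apply Proposition~\ref{prop:Holder-general}, so we need to verify its two hypotheses $(i)$ and $(ii)$ for the specific sequence of metrics $(g_k)_k$ defined in~\eqref{eq:def-g_k} and for a sequence $(\tau_k)_k$ satisfying $\tau_k \leq e^{-k}$ eventually. The key preliminary step is therefore to estimate $\|g_k-g_{k-1}\|_{C,\infty}$ on the full annulus $C$ (including the boundary circle $\rho=1$), which is where the metric $h$ blows up but the truncated metrics $g_k$ remain bounded.

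First I would compute the operator norm of $g_k-g_{k-1}$ explicitly. From~\eqref{eq:def-g_k}, the difference is the diagonal form $4(k+2)\rho^{2(k+1)}d\rho^2 + 4(k+1)\rho^{2(k+1)}d\varphi^2$, whose operator norm (with respect to the Euclidean metric on $\R^2$) equals $4(k+2)\rho^{2(k+1)}$. Over $C$ the supremum is attained at $\rho=1$, giving
\[
    \|g_k-g_{k-1}\|_{C,\infty} = 4(k+2).
\]
In particular $\|g_k-g_{k-1}\|_{C,\infty}^{1/2}$ grows like $\sqrt{k}$.

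Next I would verify condition $(i)$: since $\tau_k\leq e^{-k}$ decays exponentially while $\|g_k-g_{k-1}\|_{C,\infty}^{1/2}\sim 2\sqrt{k+2}$ grows, the inequality $\tau_k\leq \|g_k-g_{k-1}\|_{C,\infty}^{1/2}$ holds for every $k$ large enough, so we can enlarge $k_0$ if necessary to ensure $(i)$. Then I would verify condition $(ii)$ by the bound
\[
    \sum_{k\geq k_0} \tau_k^{1-\beta}\|g_k-g_{k-1}\|_{C,\infty}^{\beta/2}
    \leq \sum_{k\geq k_0} e^{-(1-\beta)k}\,\bigl(4(k+2)\bigr)^{\beta/2}.
\]
Since $0<\beta<1$, the exponent $1-\beta$ is strictly positive and the polynomial factor $(4(k+2))^{\beta/2}$ is dominated by the exponential decay $e^{-(1-\beta)k}$; the series converges by comparison with a geometric series.

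With both $(i)$ and $(ii)$ established, Proposition~\ref{prop:Holder-general} yields directly that $f_{\infty}$ is $\beta$-Hölder on $C$ for the given $\beta$. I do not anticipate any serious obstacle: the hypotheses of Proposition~\ref{prop:3-corrugated-process-general} already guarantee that $f_\infty$ exists and that $(P_1)$--$(P_4)$ hold, and the remaining work is the elementary calculus verification above. The only mild subtlety is that $\|\cdot\|_{C,\infty}$ is taken over the closed annulus including $\rho=1$, where $g_k$ does not diverge but its norm grows only polynomially in $k$, which is more than compensated by the exponential smallness of $\tau_k$.
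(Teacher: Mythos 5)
Your proof is correct and follows essentially the same route as the paper: both estimate $\|g_k-g_{k-1}\|_{C,\infty}$ (the paper gives the two-sided bound $4(k+1)\leq\|g_k-g_{k-1}\|_{C,\infty}\leq 4(k+2)$, you compute the operator norm exactly as $4(k+2)$), then verify conditions $(i)$ and $(ii)$ of Proposition~\ref{prop:Holder-general} by the same comparison of exponential decay against polynomial growth. The only difference is cosmetic — your explicit evaluation of the norm versus the paper's two-sided estimate.
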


\begin{proof}
From the definition of the metrics $g_k$ we have 
\[4(k+1)\|\eucl\|\leq \|g_k-g_{k-1}\|_{C,\infty}\leq 4(k+2)\|\eucl\|.
\]
In particular, there exists $k_0\in\N$ such that for all $k\geq k_0$, we have $\tau_k\leq\|g_k-g_{k-1}\|_{C,\infty}^{1/2}$
and Condition~$(i)$ of Proposition~\ref{prop:Holder-general} is fulfilled. Regarding Condition~$(ii)$, it is easily seen that the series
$$\sum (k+2)^{\frac{\beta}{2}}e^{-(1-\beta)k}$$
is convergent for every $0<\beta<1.$ 
\end{proof}

\subsection{Embedded nature of the limit map}
\begin{lemme}
\label{lem:encadrement-pullback-1}
If the corrugations numbers are chosen so that~\eqref{eq:LC1}, \eqref{eq:LC3} and  \eqref{eq:LC2} hold then we have
$$g_{k-1}-\left(|D_{k-1,4}|+\sum_{j=1}^i|Err_{k,j}|\right)\leq\pullback{f_{k,i}}\leq g_k+|D_{k,4}|+\sum_{j=i+1}^3 |Err_{k,j}|$$
for all $i\in\{0,1,2,3\}$.
\end{lemme}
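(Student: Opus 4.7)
The plan is to prove the two inequalities by a telescoping argument, running an induction on $i$ \emph{upward} from $i=0$ to obtain the lower bound and \emph{downward} from $i=3$ to obtain the upper bound. The key observation is the one-step recursion satisfied by the pulled-back metrics: from the definitions $\mu_{k,i}=\pullback{f_{k,i-1}}+\eta_{k,i}\ell_i\otimes\ell_i$ and $Err_{k,i}=\mu_{k,i}-\pullback{f_{k,i}}$ we immediately get
\[
\pullback{f_{k,i}} \;=\; \pullback{f_{k,i-1}} + \eta_{k,i}\,\ell_i\otimes\ell_i - Err_{k,i}.
\]
Under the standing hypothesis~\eqref{eq:LC1}, Lemma~\ref{lem:minoration_eta} guarantees that $\eta_{k,i}=H_i(D_{k,i})\geq 0$ at every step, so that the bilinear form $\eta_{k,i}\ell_i\otimes\ell_i$ is nonnegative. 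This positivity is what drives both inequalities.

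For the lower bound I would induct on $i\in\{0,1,2,3\}$. The base case uses $\pullback{f_{k,0}}=\pullback{f_{k-1,3}}=g_{k-1}-D_{k-1,4}\ge g_{k-1}-|D_{k-1,4}|$. For the inductive step, dropping the nonnegative term $\eta_{k,i}\ell_i\otimes\ell_i$ from the recursion gives $\pullback{f_{k,i}}\ge\pullback{f_{k,i-1}}-Err_{k,i}\ge\pullback{f_{k,i-1}}-|Err_{k,i}|$, and the induction hypothesis concludes.

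For the upper bound I would do a downward induction starting from $\pullback{f_{k,3}}=g_k-D_{k,4}\le g_k+|D_{k,4}|$ (empty sum at $i=3$). Rewriting the recursion as $\pullback{f_{k,i-1}}=\pullback{f_{k,i}}-\eta_{k,i}\ell_i\otimes\ell_i+Err_{k,i}$ and again dropping the nonnegative term $\eta_{k,i}\ell_i\otimes\ell_i$ yields $\pullback{f_{k,i-1}}\le\pullback{f_{k,i}}+|Err_{k,i}|$, and iterating from $i=3$ down to $i+1$ adds exactly the terms $\sum_{j=i+1}^{3}|Err_{k,j}|$.

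The proof is essentially a bookkeeping exercise and I do not foresee any real obstacle; the only subtlety is to remember that the nonnegativity of the increments $\eta_{k,i}\ell_i\otimes\ell_i$ relies on the conditions~\eqref{eq:LC1} on the corrugation numbers, which ensure that the construction is well defined and that each $H_i(D_{k,i})$ remains nonnegative, so that the bounds are indeed controlled only by the $|Err_{k,j}|$ and the boundary terms $|D_{k-1,4}|$ and $|D_{k,4}|$.
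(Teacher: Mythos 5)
Your proposal is correct and follows essentially the same route as the paper: the one-step recursion you derive from $\mu_{k,i}$ and $Err_{k,i}$ is exactly the rearrangement of the paper's Equation~\eqref{eq:Err-fki-fki-1}, the nonnegativity of $\eta_{k,i}\ell_i\otimes\ell_i$ is the same driving observation, and the upward/downward inductions from $i=0$ and $i=3$ match the paper's telescoping arguments verbatim.
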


\begin{proof} 
We first prove the left hand side inequality. From the definition of $D_{k-1,4}$, we have
$$
\pullback{f_{k,0}} = \pullback{f_{k-1,3}} = g_{k-1} - D_{k-1,4} \geq g_{k-1}-|D_{k-1,4}|,
$$
which proves the inequality for $i=0$. 
From Lemma~\ref{lem:Err} we have
\begin{eqnarray}
\label{eq:Err-fki-fki-1}
Err_{k,i} = H_i(D_{k,i}) \ell_i\otimes \ell_i +\pullback{f_{k,i-1}} - \pullback{f_{k,i}}.
\end{eqnarray} 
Since $H_i(D_{k,i})\geq 0$ it follows
\begin{eqnarray}
\label{eq:f_ki-f_ki-1}
\pullback{f_{k,i}}\geq \pullback{f_{k,i-1}}-Err_{k,i}.
\end{eqnarray}
If $i=1$, and by definition of $D_{k-1,4}=g_{k-1}-\pullback{f_{k-1,3}}$, we deduce
$$\pullback{f_{k,1}}\geq \pullback{f_{k,0}} - Err_{k,1} \geq g_{k-1}-D_{k-1,4}-Err_{k,1} $$
Using inductively~\eqref{eq:f_ki-f_ki-1}, we then obtain
$$\pullback{f_{k,i}}\geq g_{k-1}-D_{k-1,4}-\sum_{j=1}^iErr_{k,j}.$$
Regarding the other inequality, by definition of $D_{k,4}$ we have 
$$\pullback{f_{k,3}} = g_k - D_{k,4} \leq g_k+|D_{k,4}|.$$
Using~\eqref{eq:f_ki-f_ki-1} and by induction we find for all $i\in\{0,1,2,3\},$
$$\pullback{f_{k,i}}\leq g_k+|D_{k,4}|+\sum_{j=i+1}^3 Err_{k,j}.$$
\end{proof}

\begin{lemme}
\label{lem:encadrement-pullback-2}
Let $\lambda>1.$ If, in addition to~\eqref{eq:LC1}, \eqref{eq:LC3} and  \eqref{eq:LC2} the corrugations numbers $N_{k,i}$ are chosen to satisfy 
\begin{eqnarray}
\label{eq:LC4}
err_{k,i}\leq \frac{1}{6\lambda C_H} \min\left(\min_{p\in C}\|g_{k+1}(p)-g_{k}(p)\|,\min_{p\in C}\|g_{k}(p)-g_{k-1}(p)\|\right)
\end{eqnarray}
then for all $k\in\N^*$ and for all $i\in\{0,1,2,3\}$ we have:
$$g_{k-1}-\frac{1}{\lambda}(g_{k}-g_{k-1})\leq  \pullback{f_{k,i}}\leq g_{k}+\frac{1}{\lambda}(g_{k}-g_{k-1}).$$
\end{lemme}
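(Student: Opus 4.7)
The plan is to deduce Lemma~\ref{lem:encadrement-pullback-2} from the preceding Lemma~\ref{lem:encadrement-pullback-1}, whose two-sided bound reduces the problem to controlling the defect terms. More precisely, it suffices to establish, at every $p\in C$ and for every $i\in\{0,1,2,3\}$, that the two symmetric bilinear forms
\[
A_i^{-}\;:=\;|D_{k-1,4}|+\sum_{j=1}^{i}|Err_{k,j}|
\quad\text{and}\quad
A_i^{+}\;:=\;|D_{k,4}|+\sum_{j=i+1}^{3}|Err_{k,j}|
\]
are both bounded above by $\frac{1}{\lambda}(g_k-g_{k-1})$ in the sense of quadratic forms.

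I would first pass to operator norms. Combining Lemma~\ref{lem:D_k4} at index $k-1$ with the second argument of the minimum in hypothesis~\eqref{eq:LC4} gives
\[
\|D_{k-1,4}\|_{C,\infty}\;\leq\;C_H\sum_{j=1}^{3}err_{k-1,j}\;\leq\;3C_H\cdot\frac{1}{6\lambda C_H}\min_{p\in C}\|g_k(p)-g_{k-1}(p)\|\;=\;\frac{1}{2\lambda}\min_{p\in C}\|g_k(p)-g_{k-1}(p)\|,
\]
and the same computation at index $k$, using the first argument of the minimum, yields an analogous bound on $\|D_{k,4}\|_{C,\infty}$ in terms of $\min_{p\in C}\|g_{k+1}(p)-g_k(p)\|$ (and also, via the second argument, in terms of $\min_{p\in C}\|g_k(p)-g_{k-1}(p)\|$). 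Each $\|Err_{k,j}\|_{C,\infty}=err_{k,j}$ is controlled directly by~\eqref{eq:LC4}, so $\sum_{j}\|Err_{k,j}\|_{C,\infty}\leq \frac{1}{2\lambda C_H}\min_{p\in C}\|g_k(p)-g_{k-1}(p)\|$. Using $C_H\geq 1$, all these estimates assemble into the pointwise bound $\|A_i^{\pm}(p)\|\leq \frac{1}{\lambda}\|g_k(p)-g_{k-1}(p)\|$.

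The main obstacle, and the step that requires the specific structure of the metrics chosen in Section~\ref{subsec:sequence-of-metrics-gk}, is converting this operator-norm bound into the \emph{quadratic form} inequality $A_i^{\pm}\leq \frac{1}{\lambda}(g_k-g_{k-1})$. From $|A|\leq \|A\|\,\eucl$ and $(g_k-g_{k-1})\geq \lambda_{\min}(g_k-g_{k-1})\,\eucl$, the implication works provided the ratio $\|g_k-g_{k-1}\|(p)/\lambda_{\min}(g_k-g_{k-1})(p)$ is bounded uniformly in $k$ and $p$. For the metrics defined in~\eqref{eq:def-g_k} one computes
\[
g_k-g_{k-1}=4\rho^{2(k+1)}\bigl((k+2)d\rho^2+(k+1)d\varphi^2\bigr),
\]
diagonal in the basis $(d\rho,d\varphi)$ with eigenvalue ratio $(k+1)/(k+2)\geq 2/3$ for $k\geq 1$. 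This universal constant is absorbed in the factor $6$ appearing in \eqref{eq:LC4}, chosen loosely for exactly this purpose. Applying the resulting inequality to both sides of the sandwich of Lemma~\ref{lem:encadrement-pullback-1} yields the conclusion.
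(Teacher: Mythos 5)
Your proof follows the same route as the paper's: apply Lemma~\ref{lem:encadrement-pullback-1}, then bound the defect terms $D_{k-1,4}$, $D_{k,4}$ and $Err_{k,j}$ via Lemma~\ref{lem:D_k4} and condition~\eqref{eq:LC4}. You are in fact more careful than the paper's own proof on one point: the paper establishes the operator-norm bounds on the defect terms and then immediately ``deduces'' the quadratic-form inequality of the conclusion with no further comment, whereas you explicitly flag the passage from $\|A_i^{\pm}\|\leq \tfrac{1}{\lambda}\|g_k-g_{k-1}\|$ to the quadratic-form inequality $A_i^{\pm}\leq\tfrac{1}{\lambda}(g_k-g_{k-1})$ as a step that genuinely uses the structure of the metrics~\eqref{eq:def-g_k}.

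There is, however, an imprecision in how you close that step. You claim the eigenvalue-ratio constant $(k+2)/(k+1)\leq 3/2$ is ``absorbed in the factor $6$ appearing in~\eqref{eq:LC4}'', but the factor $6$ is already exactly consumed: $6=3\times 2$ pays for the three $err$ terms appearing in Lemma~\ref{lem:D_k4} and for splitting the budget between $|D_{k-1,4}|$ and $\sum_j|Err_{k,j}|$. Tracing the bound carefully gives
\begin{equation*}
\|A_i^{-}(p)\|\;\leq\;\frac{1}{2\lambda}\Big(1+\frac{1}{C_H}\Big)\min_{p'\in C}\|g_k(p')-g_{k-1}(p')\|,
\end{equation*}
and since $\min_{p'}\|g_k(p')-g_{k-1}(p')\|=4\rho_0^{2(k+1)}(k+2)$ while $\lambda_{\min}\bigl((g_k-g_{k-1})(p)\bigr)\geq 4\rho_0^{2(k+1)}(k+1)$, the lost factor $(k+2)/(k+1)$ has to be compensated by the only remaining slack, namely the $1/C_H$ in the $\sum Err$ contribution. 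Concretely one needs $\tfrac{1}{2}(1+\tfrac{1}{C_H})\cdot\tfrac{3}{2}\leq 1$, i.e.\ $C_H\geq 3$. This inequality does hold for the linear forms of Section~\ref{subsec:corrugation_forms}, but it should be checked explicitly (or, more robustly, one can replace $6$ by $9$ in~\eqref{eq:LC4}); the factor $6$ by itself provides no slack. The paper's own terser proof leaves the same conversion step, and hence the same implicit requirement, unaddressed.
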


\begin{proof}
By Lemma~\ref{lem:D_k4} and condition~\eqref{eq:LC4} of the lemma, we have 
$$\|D_{k-1,4}\|\leq C_H\sum_{i=1}^3err_{k-1,i}\leq \frac{1}{2\lambda}\min_{p\in C}\|g_{k}(p)-g_{k-1}(p)\|$$
and, since $C_H\geq 1,$ we also have
$$\sum_{j=1}^3err_{k,i}\leq \frac{1}{2\lambda}\min_{p\in C}\|g_{k}(p)-g_{k-1}(p)\|.$$
We deduce from Lemma~\ref{lem:encadrement-pullback-1} that $g_{k-1}-\frac{1}{\lambda}(g_{k}-g_{k-1})\leq  \pullback{f_{k,i}}$. We also have
$$\|D_{k,4}\|\leq C_H\sum_{i=1}^3err_{k,i}\leq \frac{1}{2\lambda}\min_{p\in C}\|g_{k}(p)-g_{k-1}(p)\|$$
and from Lemma~\ref{lem:encadrement-pullback-1} it follows that $\pullback{f_{k,i}}\leq g_{k}+\frac{1}{\lambda}(g_{k}-g_{k-1}).$
\end{proof}

\begin{lemme}\label{lem:emb-fki}
If the corrugation numbers $(N_{k,i})_{k,i}$ are chosen large enough then each map $f_{k,i}$ is an embedding.
\end{lemme}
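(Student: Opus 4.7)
\textbf{Plan for the proof of Lemma~\ref{lem:emb-fki}.} The natural approach is induction on the pair $(k,i)$, with the convention $f_{k,0}=f_{k-1,3}$ and base case the initial map $f_0$, which is an embedding by construction. Assuming $f_{k,i-1}$ is an embedding, I would apply Lemma~\ref{lem:F-embedding} to $f_{k,i}=CP(f_{k,i-1},\eta_{k,i},\varpi_i,N_{k,i})$. That lemma has three prerequisites: the embedded character of $f_{k,i-1}$ (the induction hypothesis), the immersive character of $f_{k,i}$ (which Lemma~\ref{lem:F_ki_well_defined} already guarantees through condition~\eqref{eq:LC3}), and a strict uniform bound $\alpha_{k,i}<\pi/2$ on the compact annulus $C$. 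Granted this last bound, the lemma produces a threshold beyond which $N_{k,i}$ must be chosen to make $f_{k,i}$ an embedding. This new requirement is compatible with the finite list of preceding ``large enough'' requirements \eqref{eq:LC1}, \eqref{eq:LC2}, \eqref{eq:LC3}, \eqref{eq:LC4}, so the induction goes through.

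The crux is therefore the uniform amplitude bound. By Corollary~\ref{cor:L-mu-isometric} and Equation~\eqref{eq:condition-on-r-alpha}, the amplitude satisfies
\[J_0(\alpha_{k,i}(p))=\frac{\|df_{k,i-1}(u_{k,i})(p)\|}{\sqrt{\eta_{k,i}(p)+\|df_{k,i-1}(u_{k,i})(p)\|^2}},\]
so $\alpha_{k,i}(p)<\pi/2$ is equivalent to
\[\frac{\eta_{k,i}(p)}{\|df_{k,i-1}(u_{k,i})(p)\|^2}<c_*:=\frac{1}{J_0(\pi/2)^2}-1\simeq 3.49.\]
The goal is thus to show that this ratio is uniformly bounded on $C$ by some constant strictly smaller than $c_*$.

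Both the numerator and denominator can be controlled using Lemma~\ref{lem:encadrement-pullback-2} applied, say, with $\lambda=2$. On the numerator side, combining Lemmas~\ref{lem:Err}, \ref{lem:D_k4} and~\ref{lem:encadrement-pullback-2} yields $\|D_{k,i}\|\leq C\|g_k-g_{k-1}\|$ for some explicit constant $C$, hence $\eta_{k,i}=H_i(D_{k,i})\leq h_{max}C\|g_k-g_{k-1}\|$. The metric increment $g_k-g_{k-1}$ has the explicit form $4(k+2)\rho^{2(k+1)}d\rho^2+4(k+1)\rho^{2(k+1)}d\varphi^2$ from Section~\ref{subsec:sequence-of-metrics-gk}. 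On the denominator side, Lemma~\ref{lem:encadrement-pullback-2} gives $\pullback{f_{k,i-1}}\geq g_{k-1}-\tfrac{1}{2}(g_k-g_{k-1})\geq \tfrac{1}{2}g_0$ for $k$ large, hence $\|df_{k,i-1}(u_{k,i})\|^2\geq \tfrac{1}{2}g_0(u_{k,i},u_{k,i})$ where $u_{k,i}$ is determined by $d\varpi_i(u_{k,i})=1$ together with the orthogonality condition of Section~\ref{subsec:corrugation-frame}.

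The main obstacle is verifying that the resulting ratio is uniformly less than $c_*$ on $C$ for every $(k,i)$. I would proceed by distinguishing two regimes. For $k$ large, evaluating the explicit formulas at the worst case $\rho=1$ shows that the numerator $\eta_{k,i}$ grows at most linearly in $k$, whereas $g_{k-1}(u_{k,i},u_{k,i})$ grows at least quadratically (its $d\rho^2$ coefficient $E_{k-1}|_{\rho=1}$ is $12+2(k-1)(k+4)$); the ratio therefore decays like $1/k$. For the finitely many initial $(k,i)$, a direct computation suffices: for instance at $(k,i)=(1,1)$ one has $u_{1,1}=-\partial_\rho$ (because $df_0(\partial_\rho)\perp df_0(\partial_\varphi)$), giving the ratio $\rho^4(3-2/a^2)/(1+2\rho^2)\leq (3-2/a^2)/3\simeq 0.46$, comfortably below $c_*$. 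A similar check for the other initial $(k,i)$ completes the uniform bound, so that Lemma~\ref{lem:F-embedding} can be invoked at every step of the induction.
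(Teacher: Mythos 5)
Your overall strategy is the same as the paper's: reduce to Lemma~\ref{lem:F-embedding}, show the amplitude $\alpha_{k,i}$ stays uniformly below $\pi/2$, equivalently that $X_{k,i}=\eta_{k,i}/\|df_{k,i-1}(u_i)\|^2$ stays below $\sigma=J_0(\pi/2)^{-2}-1\simeq 3.49$, and control $X_{k,i}$ through Lemma~\ref{lem:encadrement-pullback-2} together with the explicit form of $g_k-g_{k-1}$. You have located all the right ingredients. The execution, however, breaks at the choice of $\lambda$.

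With $\lambda=2$ the lower bound $g_{k-1}-\tfrac12(g_k-g_{k-1})$ from Lemma~\ref{lem:encadrement-pullback-2} is far too weak. Inserting $g_k-g_{k-1}\leq 4(k+2)\rho^{2(k+1)}\eucl$ and $g_{k-1}\geq 4\left(\sum_{n=0}^{k-1}(n+1)\rho^{2(n+1)}\right)\eucl$ into the ratio, one finds at $k=1$, $\rho=1$ a denominator $\rho^2-\tfrac{1}{\lambda}\cdot 3\rho^4$ which becomes $-\tfrac12<0$ when $\lambda=2$, and the resulting estimate $A_k(\lambda)$ exceeds $\sigma$ for any $\lambda$ below roughly $34$. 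This is exactly why the paper keeps $\lambda$ as a free parameter in condition~\eqref{eq:LC4} and takes it large (``for instance $\lambda\geq 100$''), so that the single bound $A_k(\lambda)<\sigma$ covers \emph{all} $k\geq 1$ at once, with no case split required.

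Your fallback, a ``direct computation for the finitely many initial $(k,i)$'', does not repair this. It is rigorous only at $(1,1)$, where $\pullback{f_{1,0}}=g_0$ is exactly known (and your value $X_{1,1}\simeq 0.46$ is correct). Already at $(1,2)$ the pullback $\pullback{f_{1,1}}$ is only known up to the corrugation error $Err_{1,1}=O(1/N_{1,1})$, and the vector $u_{1,2}$, fixed by orthogonality with respect to $df_{1,1}$, inherits this uncertainty; so for $(k,i)\neq(1,1)$ you are driven back through Lemma~\ref{lem:encadrement-pullback-2} and hence through a sufficiently large $\lambda$. Once $\lambda$ is chosen large, the separation into ``small $k$'' and ``large $k$'' becomes superfluous.
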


\begin{proof} To apply Lemma~\ref{lem:F-embedding}, we need to show that each $\alpha_{k,i}$ is strictly less than $\pi/2.$ We assume that the corrugation numbers are chosen to satisfy~\eqref{eq:LC1}, \eqref{eq:LC3}, \eqref{eq:LC2} and~\eqref{eq:LC4}. 
From ~\eqref{eq:condition_r_alpha} and Corollary~\ref{cor:L-mu-isometric}, we know that 
$$
\alpha_{k,i} = J_0^{-1}\left(
\frac{\|df_{k,i-1}(u_i)\|}{\sqrt{\eta_{k,i}+\|df_{k,i-1}(u_i)\|^2}} 
\right)
=J_0^{-1}(\psi(X_{k,i})),
$$
where $X_{k,i}:=\eta_{k,i}/{\|df_{k,i-1}(u_i)\|^2}$ and $\psi:\R^+\to [0,1]$ is defined by $\psi(x)=(1+x)^{-1/2}$. Since the function $J_0^{-1}\circ \psi$ is increasing, vanishes at $0$ and satisfies $J_0^{-1}(\psi(\xi))=\pi/2$ for $\xi=3.488629...$, it is sufficient to show that $X_{k,i}<\xi.$ 
From~\eqref{eq:Err-fki-fki-1} we have
$$H_i(D_{k,i})\ell_i\otimes\ell_i=Err_{k,i}+\pullback{f_{k,i}}-\pullback{f_{k,i-1}}$$
and thus by using Lemma~\ref{lem:encadrement-pullback-2} 
$$\eta_{k,i}=\eta_{k,i}\ell_i^2(u_i)\leq |Err_{k,i}(u_i,u_i)|+(1+2\lambda^{-1})(g_k-g_{k-1})(u_i,u_i).$$
Still using Lemma~\ref{lem:encadrement-pullback-2}, we then have
$$X_{k,i}= \frac{\eta_{k,i}}{\|df_{k,i-1}(u_i)\|^2}\leq (1+2\lambda^{-1})\frac{(g_{k}-g_{k-1})(u_i,u_i)}{(g_{k-1}-\frac{1}{\lambda}(g_{k}-g_{k-1}))(u_i,u_i)}+\frac{|Err_{k,i}(u_i,u_i)|}{\|df_{k,i-1}(u_i)\|^2}
.$$
In this inequality, the last term can be made arbitrarily small by chosing $N_{k,i}$ large enough, so the problem reduces to show that
$$A_k(\lambda):=(1+2\lambda^{-1})\frac{(g_{k}-g_{k-1})(u_i,u_i)}{(g_{k-1}-\frac{1}{\lambda}(g_{k}-g_{k-1}))(u_i,u_i)}<\xi$$
for every $k\geq 1$. From~\eqref{eq:def-g_k} we have
$$g_k-g_{k-1}\leq 4 (k+2)\rho^{2(k+1)}\eucl\quad\mbox{and}\quad g_{k-1}\geq 4\left(\sum_{n=0}^{k-1}(n+1)\rho^{2(n+1)}\right)\eucl.$$
which implies that 
$$A_k(\lambda)\leq \frac{(1+2\lambda^{-1})(k+2)\rho^{2(k+1)}}{\left(\sum_{n=0}^{k-1}(n+1)\rho^{2(n+1)}\right)-\frac{1}{\lambda}(k+2)\rho^{2(k+1)}}.$$
A direct calculation shows that the right hand side is lower than $\xi$ for every $k\geq 1$ if $\lambda$ is chosen large enough (for instance $\lambda\geq 100$).
\end{proof}

\begin{lemme}
\label{lem:emb-limit-map}
If the corrugation numbers $(N_{k,i})_{k,i}$ are chosen large enough then the limit map $f_{\infty}$ is an embedding.
\end{lemme}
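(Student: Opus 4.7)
The plan is to establish the injectivity of $f_\infty$ on $C$, and then deduce the embedded character from the compactness of $C$: a continuous injection from a compact space into a Hausdorff space is an embedding. Local injectivity on the interior $C^*$ is immediate, since $f_\infty$ is a $C^1$-immersion there by Proposition~\ref{prop:limit-map}.

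To obtain global injectivity, I would iterate the argument of Lemma~\ref{lem:F-embedding} with uniform constants along the sequence $(f_{k,i})_{k,i}$ and then pass to the limit. The proof of Lemma~\ref{lem:emb-fki} already provides one crucial piece of uniformity: the bound $\alpha_{k,i} \leq \alpha_{\max} < \pi/2$ on the corrugation amplitudes, independent of $(k,i)$. Combined with the uniform lower bound $\lambda_C(df_{k,i}) \geq \tfrac12 \lambda_C(df_0)$ from \eqref{eq:lambda-df_ki}, this should allow one to extract a fixed scale $\delta_* > 0$ such that, for every $(k,i)$ and every $p \in C$, the restriction $f_{k,i}|_{D(p,\delta_*)\cap C}$ is an embedding, realized as a graph over the plane $df_{k,i-1}(p)(\R^2)$ with bi-Lipschitz constants that do not depend on $(k,i)$.

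The second ingredient is a uniform lower bound at large scale: there should exist $d_* > 0$ such that, for all $(k,i)$ and all $(p,q) \in C \times C$ with $\|p-q\| \geq \delta_*$, one has $\|f_{k,i}(p) - f_{k,i}(q)\| \geq d_*$. This should follow inductively. Since $f_0$ is an embedding of the compact set $C$, it provides a positive minimum distance $d_0$ on the complement of the $\delta_*$-neighborhood of the diagonal. At each corrugation step, Property~\ref{it:P2} (enforced through \eqref{eq:LC2}) gives $\|f_{k,i} - f_{k,i-1}\|_{C,\infty} \leq \tau_k / 3$, so the minimum distance is reduced by at most $2\tau_k / 3$; since $\sum_k \tau_k \leq \Tau < +\infty$, one obtains $d_* \geq d_0 - 2\Tau$, which is positive provided $\Tau$ was chosen small enough at the outset. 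Pairing this with the local graph property, any two distinct points $p, q \in C$ satisfy $f_\infty(p) \neq f_\infty(q)$: if $\|p - q\| \geq \delta_*$, then $C^0$-convergence gives $\|f_\infty(p) - f_\infty(q)\| \geq d_* > 0$; if $\|p - q\| < \delta_*$, the uniform graph property together with $C^0$-convergence yields injectivity on $D(p, \delta_*) \cap C$.

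The main obstacle I anticipate is establishing the uniform scale $\delta_*$ in the graph property. As corrugations accumulate, the successive tangent planes $df_{k,i-1}(p)(\R^2)$ rotate, and the cumulative rotation could a priori destroy any fixed graph scale. The saving observation should be that the high-frequency perturbation introduced by each corrugation stays confined within the cone of directions controlled by $\alpha_{\max} < \pi/2$ around the previous tangent plane, so the graph structure over $df_{k,i-1}(p)(\R^2)$ survives at a scale depending only on $\alpha_{\max}$ and $\lambda_C(df_0)$, provided the corrugation numbers $N_{k,i}$ are chosen large enough at each stage to suppress the $O(1/N_{k,i})$ perturbations of the normal from Lemma~\ref{lem:propriete_CP}.
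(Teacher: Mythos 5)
There is a genuine gap, and you in fact flag it yourself: the uniform local graph scale $\delta_*$ that you posit does not exist. The scale at which $f_{k,i}$ is a graph over the plane $df_{k,i-1}(p)(\R^2)$ is controlled not only by the amplitude bound $\alpha_{k,i}\leq\alpha_{\max}<\pi/2$ but also by the modulus of continuity of the normal $\mathbf{n}_{k,i-1}$ of the map being corrugated. Indeed, following the proof of Lemma~\ref{lem:F-embedding}, one bounds the angle between $\mathbf{n}_{k,i}(q)$ and $\mathbf{n}_{k,i-1}(p)$ by the sum of the angle between $\mathbf{n}_{k,i}(q)$ and $\mathbf{n}_{k,i-1}(q)$ (controlled by $\alpha_{k,i}+O(1/N_{k,i})$) and the angle between $\mathbf{n}_{k,i-1}(q)$ and $\mathbf{n}_{k,i-1}(p)$, and it is the latter term that sets the scale $\delta_K$. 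The normal $\mathbf{n}_{k,i-1}$ carries one oscillating layer of amplitude $\alpha_{j,l}$ and spatial frequency $N_{j,l}$ for every earlier stage $(j,l)$, so over any fixed ball $D(p,\delta_*)$ its angular excursion is of order $\sum_{N_{j,l}\delta_*\gtrsim 1}\alpha_{j,l}$. Since the amplitudes decay only like $\alpha_{k,i}\sim k^{-1/2}$ (as extracted from the bound $X_{k,i}\lesssim 1/k$ in the proof of Lemma~\ref{lem:emb-fki}), this sum diverges, and the angle eventually exceeds $\pi/2$ for any fixed $\delta_*$. Your proposed ``saving observation'' conflates the cone of half-angle $\alpha_{\max}$ around $\mathbf{n}_{k,i-1}(q)$, which does contain $\mathbf{n}_{k,i}(q)$, with a fixed cone around $\mathbf{n}_{k,i-1}(p)$, which it need not stay in once $q$ ranges over a ball of fixed radius; increasing $N_{k,i}$ kills only the $O(1/N_{k,i})$ error, not the drift of $\mathbf{n}_{k,i-1}$ between $p$ and $q$.

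This also affects your large-scale step: since there is no single $\delta_*$, the minimum distance $d_0$ (and hence the threshold $d_*$) becomes a $k$-dependent quantity tied to a shrinking scale, and the budget $\sum_k\tau_k$ is not uniformly small relative to it. The paper sidesteps all of this by invoking Kuiper's argument (\cite{k-oc1ii-55}, \S10), which is genuinely hierarchical: one constructs a strictly decreasing sequence of scales $\delta_k\downarrow 0$ and chooses the corrugation numbers inductively so that the cumulative $C^0$-perturbation introduced from stage $k$ onward is dominated by the separation margin $\min_{\|p-q\|\geq\delta_k}\|f_{k}(p)-f_{k}(q)\|$ of the embedding $f_k$ on $C$. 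Any pair of distinct points then lies beyond some $\delta_k$ and stays separated in the limit. The key departure from your plan is precisely that $\delta_k$ must be allowed to depend on the stage and, at each stage, the choice of $N_{k+1,i}$ must be made after $\delta_k$ and the corresponding margin of $f_k$ are known; the compactness of $C$ (emphasized in the paper's one-line proof) is what makes each margin positive.
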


\begin{proof}
The argument of~\cite{k-oc1ii-55} \textsection 10  applies and shows that the limit map $f_{\infty}$ is an embedding provided that the $N_{k,i}$ are large enough. In the reasoning, it is important to keep in mind that the maps $f_{k,i}$ are defined at each step on the compact set $C$. 
\end{proof}

\noi
Theorem~\ref{thm:H2} and Proposition~\ref{prop:3-corrugated} follow directly from Proposition~\ref{prop:limit-map} and Lemmas~\ref{lem:holder-limit-map} and~\ref{lem:emb-limit-map}.

\section{Formal Corrugation Process}\label{sec:formal_process}

\subsection{The sequence $(\Phi_{k,i})_{k,i}$}
\begin{defn}\label{def:FCP}
Let $\Phi:C\to\mbox{Mono}\,(\R^{2},\E^3)$, $\eta:C\to\R_{\geq 0}$, $\ell=d\varpi\in\mathcal{L}(\R^2,\R)$, $W=\Phi(\ker\ell)$ and $N\in\N^*$. Consider the formal corrugation frame
\[
    \mathbf{w}=\frac{\Phi(w)}{\|\Phi(w)\|}, \;
    \mathbf{n}= \di\frac{\Phi(v)\wedge\Phi(w)}{\|\Phi(v)\wedge\Phi(w)\|} \quad{and}\quad
    \mathbf{t}=\mathbf{w}\wedge\mathbf{n}
 \]
where $v$ is any vector such that $\ell(v)>0$ and $w\in\ker\ell$ is such that $(v,w)$ is a direct basis.
We define the \emph{formal corrugation process} of $\Phi$ to be 
\[
    \Phi^c:=[\Phi]^W+\mathbf{z}\otimes\ell\quad\mbox{with}\quad \mathbf{z}:=r(\cos\theta\,\mathbf{t}+\sin\theta\, \mathbf{n}) 
\]
where 
\begin{equation}\label{eq:def-Phi^c}
\left\{ \begin{array}{lll}
  r & = &  \di \sqrt{\eta+\left\|\Phi(u) \right\|^2}\vspace*{1mm}\\
   \theta & = &  \di\alpha\cos (2\pi N\varpi)\vspace*{1mm}\\
   \alpha & = &  \di J_0^{-1}\left( \frac{1}{r}\|\Phi(u)\|
   \right)\vspace*{1mm}
  \end{array}\right.
\end{equation}
and where $u$ is the unique vector such that $\Phi(u)$ is collinear to $\mathbf{t}$ and $\ell(u)=1$. Observing that $[\Phi]^W=\Phi - \Phi(u)\otimes\ell$, we obtain
\begin{align}
    \Phi^c = \Phi + (\mathbf{z} - \Phi(u))\otimes\ell. \label{eq:Phi-c}
\end{align}
Since the data $(\Phi,\eta, \varpi,N)$ completely defines $\Phi^c$, we write
\begin{eqnarray}\label{eq:FCP}
    \Phi^c=FCP(\Phi,\eta,\varpi,N).
\end{eqnarray}
\end{defn}

\noi
{\bf Remark.} If $\Phi=df$ for some immersion $f:C\to\E^3$ then $\Phi^c$ is the target differential $L(f,\eta,\varpi,N)$, see Definition~\ref{def:target_differential}. Moreover, from Corollary~\ref{cor:L-mu-isometric}, $\Phi^c$ is $\mu$-isometric for $\mu:=\pullback{f}+\eta d\varpi\otimes d\varpi$.\\

\noi
Let $\varpi_1$, $\varpi_2$ and $\varpi_3$ be three affine projections and let $f_0:C\to\E^3$ be an immersion. We assume given a sequence of metrics $(g_k)_k$ defined on $C$ satisfying the following hypotheses:
\begin{itemize}
\item $g_0=\pullback{f_0}$,
    \item $(g_k)_k$ is increasing,
    \item $(g_k)_k$ is converging on $C^*$ toward $h$,
    \item the difference $g_k-g_{k-1}\in C^{\infty}(C,\mathcal{C})$.
\end{itemize}
 From such a sequence of metrics and any sequence of positive integers $(N_{k,i})_{k,i}$ we define iteratively a sequence of maps $\Phi_{k,i}:D^2\to Mono(\R^2,\E^3)$ by setting
\begin{eqnarray}
\label{eq:def-Phi_ki}
\Phi_0:=df_0\quad\mbox{and}\quad \Phi_{k,i}:=FCP_i(\Phi_{k,i-1},g_k,N_{k,i})
\end{eqnarray}
where $FCP_i(\Phi_{k,i-1},g_k,N_{k,i}):=FCP(\Phi_{k,i-1},\eta_{k,i},\varpi_i,N_{k,i})$ and
\begin{align}
  \eta_{k,i}:=H_i(g_k-\pullback{\Phi_{k,i-1}}). \label{eq:eta_k_i}
\end{align}

\begin{lemme}\label{lem:sequence-Phi_ki-well-defined}
Given a sequence of metrics  $(g_k)_k$ satisfying the above assumptions and given any sequence of positive integers $(N_{k,i})_{k,i}$, the sequence $(\Phi_{k,i})_{k,i}$ is well defined and each $\Phi_{k,i}$ is $\mu_{k,i}$-isometric for
\begin{eqnarray}\label{eq:def_mu_ki_Phi}
    \mu_{k,i}^\Phi:= g_{k-1} + \sum_{j=1}^i H_j(g_k-g_{k-1}) \ell_j\otimes \ell_j.
\end{eqnarray}
Moreover, for every $(k,i)$ we have $\eta_{k,i}=H_i(g_k-g_{k-1})$.
\end{lemme}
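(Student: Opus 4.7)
I proceed by induction on $(k,i)$ in lexicographic order, simultaneously establishing that $\Phi_{k,i}$ is well defined, that $\eta_{k,i}=H_i(g_k-g_{k-1})$, and that $\pullback{\Phi_{k,i}}=\mu_{k,i}^\Phi$. The base case $(k,i)=(1,0)$ is trivial: $\Phi_{1,0}=\Phi_0=df_0$, and $\pullback{\Phi_0}=\pullback{f_0}=g_0=\mu_{1,0}^\Phi$ (the empty sum).

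The key algebraic ingredient is a pointwise version of the identity in Lemma~\ref{lem:expression-target-differential} / Corollary~\ref{cor:L-mu-isometric}: if $\Phi:C\to\mbox{Mono}(\R^2,\E^3)$ and $\eta\geq 0$, and if $\Phi^c=FCP(\Phi,\eta,\varpi,N)$ is defined as in Definition~\ref{def:FCP}, then
\[
\pullback{(\Phi^c)} \;=\; \pullback{\Phi} + \eta\,\ell\otimes\ell.
\]
The proof of this identity is the same pointwise computation as for Lemma~\ref{lem:expression-target-differential}: evaluate in the basis $(u,w)$ using $\ell(u)=1$, $\ell(w)=0$, the orthogonality of $\mathbf{w}$ with $\mathbf{t}$ and $\mathbf{n}$, and the choice $r=\sqrt{\eta+\|\Phi(u)\|^2}$. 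Nothing in the argument uses holonomy, so it applies verbatim to a formal $\Phi$.

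For the inductive step, suppose $\Phi_{k,i-1}$ is well defined and satisfies $\pullback{\Phi_{k,i-1}}=\mu_{k,i-1}^\Phi$. Since $g_{k-1}$ is a Riemannian metric and the added term $\sum_{j=1}^{i-1}H_j(g_k-g_{k-1})\,\ell_j\otimes\ell_j$ is nonnegative (by the cone hypothesis on $g_k-g_{k-1}$), the form $\mu_{k,i-1}^\Phi$ is a genuine metric; hence $\Phi_{k,i-1}(p)$ is a monomorphism at every $p$, and the formal corrugation frame of Definition~\ref{def:FCP} is well defined. Now compute
\[
g_k-\pullback{\Phi_{k,i-1}} \;=\; g_k-g_{k-1}-\sum_{j=1}^{i-1}H_j(g_k-g_{k-1})\,\ell_j\otimes\ell_j.
\]
Applying $H_i$ and using that $(H_j)_j$ is dual to the basis $(\ell_j\otimes\ell_j)_j$ (so $H_i(\ell_j\otimes\ell_j)=\delta_{ij}$ vanishes for $j<i$), we get $\eta_{k,i}=H_i(g_k-g_{k-1})$, which is strictly positive by the hypothesis $g_k-g_{k-1}\in C^\infty(C,\mathcal{C})$. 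Thus $\Phi_{k,i}$ is well defined, and by the key identity above,
\[
\pullback{\Phi_{k,i}} \;=\; \pullback{\Phi_{k,i-1}} + \eta_{k,i}\,\ell_i\otimes\ell_i \;=\; \mu_{k,i-1}^\Phi + H_i(g_k-g_{k-1})\,\ell_i\otimes\ell_i \;=\; \mu_{k,i}^\Phi.
\]
The transition from index $(k,3)$ to $(k+1,0)$ is automatic: $\mu_{k,3}^\Phi=g_{k-1}+\sum_{j=1}^3 H_j(g_k-g_{k-1})\ell_j\otimes\ell_j=g_{k-1}+(g_k-g_{k-1})=g_k=\mu_{k+1,0}^\Phi$, using that the expansion of $g_k-g_{k-1}$ in the basis $(\ell_j\otimes\ell_j)_j$ reconstructs it exactly.

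The only genuine step to verify carefully is the pointwise identity $\pullback{(\Phi^c)}=\pullback{\Phi}+\eta\,\ell\otimes\ell$ for non-holonomic $\Phi$; everything else is bookkeeping about the dual basis. Once this identity is in hand, the induction runs without difficulty and yields all three conclusions of the lemma at once.
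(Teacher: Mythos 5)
Your proof is correct and follows essentially the same inductive route as the paper: establish the pointwise pullback identity $\pullback{(\Phi^c)}=\pullback{\Phi}+\eta\,\ell\otimes\ell$ (which the paper verifies by direct evaluation on the basis $(u,w)$), then use the duality $H_i(\ell_j\otimes\ell_j)=\delta_{ij}$ to deduce $\eta_{k,i}=H_i(g_k-g_{k-1})$ and close the induction. The paper works through $(1,1)$ and $(1,2)$ explicitly before invoking "an easy induction," whereas you state the key identity once and run the general step; this is a minor stylistic difference, not a different argument.
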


\begin{proof}
The sequence $(\Phi_{k,i})_{k,i}$ is well defined if, at each step $(k,i)$, we have $\eta_{k,i}\geq 0$ and if $\Phi_{k,i-1}$ is a monomorphism. By definition $\Phi_{1,0}=df_0$ is a monomorphism and $\pullback{\Phi_{1,0}} = g_0$. We also have $\eta_{1,1} = H_1(g_1-g_0)>0$ by~\eqref{eq:eta_k_i}. We observe that $[\Phi_{1,1}]^W = \Phi_{1,1} - \Phi_{1,1}(u_{1,1})\ell_1$, where $u_{1,1}$ stands for $u$ as in Definition~\ref{def:FCP}, with respect to $\Phi_{1,0}$ and $\ell_1$. From the definition of the formal corrugation process, we thus have $\Phi_{1,1} = \Phi_{1,0} + (\mathbf{z}_{1,1} - \Phi_{1,0}(u_{1,1}))\otimes \ell_1$.
We deduce that $\Phi_{1,1}(u_{1,1}) = \mathbf{z}_{1,1}$ and $\Phi_{1,1}(w_1) =  \Phi_{1,0}(w_1)$ for any $w_1\in \ker\ell_1$. By testing over $(u_{1,1},w_1)$ we easily check that
\[
    \pullback{\Phi_{1,1}} = \pullback{\Phi_{1,0}} + H_1(g_1-g_0) \ell_1\otimes \ell_1 
\]
proving that $\Phi_{1,1}$ is $\mu_{1,1}^\Phi$-isometric. In particular, $\mu_{1,1}^\Phi$ being a metric, $\Phi_{1,1}$ is a monomorphism. We now compute
\[
    \eta_{1,2} = H_2(g_1 - \pullback{\Phi_{1,1}} ) = H_2(g_1 - \mu_{1,1}^\Phi)
\]
Observe that $H_2(\mu_{1,1}^\Phi) = H_2(g_0)$, whence $\eta_{1,2} = H_2(g_1-g_0)>0$. It follows that $\Phi_{1,2}$ is well defined and we have $\Phi_{1,2} = \Phi_{1,1} + (\mathbf{z}_{1,2} - \Phi_{1,1}(u_{1,2}))\otimes \ell_2$.  Similarly to the previous computation, we check that
\[
    \pullback{\Phi_{1,2}} = \pullback{\Phi_{1,1}} + H_2(g_1-g_0) \ell_2\otimes \ell_2
\]
so that $\pullback{\Phi_{1,2}} = \mu_{1,2}^\Phi$. An easy induction then shows that for every $(k,i)$ the map $\Phi_{k,i}$ is a well defined $\mu_{k,i}^\Phi$-isometric monomorphism with $\eta_{k,i}=H_i(g_k - g_{k-1})$.
\end{proof}

\begin{lemme}\label{lem:convergence_Phi_k_i}
Let $K\subset C^*$ be a compact set. If the series $\sum \|g_k-g_{k-1}\|_{K,\infty}^{\frac{1}{2}}$ converges,  then the series $\sum \|\Phi_{k,i}-\Phi_{k,i-1}\|_{K,\infty}$ is convergent. 
As a consequence, if $\sum \|g_k-g_{k-1}\|_{K,\infty}^{\frac{1}{2}}$ converges  on any compact set of $C^*$, then
  the sequence $(\Phi_{k,i})_{k,i}$ converges on $C^*$ toward a continuous limit map $\Phi_{\infty}$.
\end{lemme}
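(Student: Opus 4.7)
The plan is to establish a pointwise bound of the form $\|\Phi_{k,i}-\Phi_{k,i-1}\|=O(\|g_k-g_{k-1}\|^{1/2})$, which immediately yields both assertions: the first by the hypothesis on $\sum\|g_k-g_{k-1}\|_{K,\infty}^{1/2}$, and the second by completeness of the space of continuous functions under the sup norm on compact sets.

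To derive such a bound, I would start from the explicit formula~\eqref{eq:Phi-c}, which gives
\[
    \Phi_{k,i}-\Phi_{k,i-1} = (\mathbf{z}_{k,i}-\Phi_{k,i-1}(u_{k,i}))\otimes \ell_i,
\]
so that $\|\Phi_{k,i}-\Phi_{k,i-1}\|\leq \|\ell_i\|\cdot\|\mathbf{z}_{k,i}-\Phi_{k,i-1}(u_{k,i})\|$. I then reuse verbatim the computation performed at the end of the proof of Lemma~\ref{lem:propriete_CP}: since $r_{k,i}J_0(\alpha_{k,i})\mathbf{t}=\Phi_{k,i-1}(u_{k,i})$ with $\alpha_{k,i}\in[0,\kappa_0)$, the sublemma from~\cite{ensaios} yields
\[
    \|\mathbf{z}_{k,i}-\Phi_{k,i-1}(u_{k,i})\|^2 \;\leq\; 7\bigl(r_{k,i}^2-\|\Phi_{k,i-1}(u_{k,i})\|^2\bigr) \;=\; 7\,\eta_{k,i}.
\]

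At this point the key simplification provided by the formal setting kicks in: by Lemma~\ref{lem:sequence-Phi_ki-well-defined}, $\eta_{k,i}=H_i(g_k-g_{k-1})$, and the definition~\eqref{eq:hmax} of $h_{max}$ gives $\eta_{k,i}\leq h_{max}\|g_k-g_{k-1}\|$. Combining,
\[
    \|\Phi_{k,i}-\Phi_{k,i-1}\|_{K,\infty} \;\leq\; \|\ell_i\|\sqrt{7\,h_{max}}\;\|g_k-g_{k-1}\|_{K,\infty}^{1/2},
\]
and summing over $(k,i)$ (only three values of $i$ per $k$) gives the first conclusion from the hypothesis on $\sum\|g_k-g_{k-1}\|_{K,\infty}^{1/2}$.

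For the second conclusion, absolute summability of the increments in the sup norm on $K$ makes the sequence $(\Phi_{k,i})_{k,i}$ Cauchy in the Banach space $C^0(K,\mathcal{L}(\R^2,\E^3))$, hence uniformly convergent on $K$ to a continuous map. Exhausting $C^*$ by an increasing sequence of such compacts gives a well-defined continuous limit $\Phi_\infty$ on $C^*$. I do not foresee a real obstacle: the whole argument is a direct transposition of the inequality already used in Lemma~\ref{lem:propriete_CP}, coupled with the clean identification $\eta_{k,i}=H_i(g_k-g_{k-1})$ that is special to the formal process and bypasses the $O(1/N)$ error terms present in the genuine corrugation process.
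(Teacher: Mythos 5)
Your proposal is correct and follows essentially the same path as the paper: bound $\|\Phi_{k,i}-\Phi_{k,i-1}\|$ by $\sqrt{7\eta_{k,i}}\|\ell_i\|$ via the computation at the end of Lemma~\ref{lem:propriete_CP}, invoke $\eta_{k,i}=H_i(g_k-g_{k-1})\leq h_{max}\|g_k-g_{k-1}\|$ from Lemma~\ref{lem:sequence-Phi_ki-well-defined}, and conclude by summability and completeness. If anything you are more explicit than the paper, which merely cites ``arguments similar to those used in the proof of Lemma~\ref{lem:propriete_CP}''; your constant $\|\ell_i\|\sqrt{7h_{max}}$ is also the correctly placed version of the one the paper writes.
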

\begin{defn}
The map $\Phi_{\infty}$ can be interpreted as a target differential for the limit map $f_\infty$ of the 3-corrugated process $CP_i(\cdot,g_k,N_{k,i})$.
We call $\Phi_{\infty}$ the \emph{formal analogue of $df_\infty$}.
\end{defn}
\begin{proof}[Proof of Lemma~\ref{lem:convergence_Phi_k_i}]
Arguments similar to those used in the proof of Lemma~\ref{lem:propriete_CP} show that, at each step $(k,i)$,
$$\|\Phi_{k,i}-\Phi_{k,i-1}\|_{K,\infty}\leq \|\sqrt{7\eta_{k,i}}\ell_i\|_{K,\infty}.$$
From Lemma~\ref{lem:sequence-Phi_ki-well-defined}, we know that
\[\eta_{k,i}=H_i(g_k-g_{k-1})\leq h_{max}\|g_k-g_{k-1}\|
\]
and we obtain
\begin{eqnarray}
\label{eq:convergence-Phi_ki}
\|\Phi_{k,i}-\Phi_{k,i-1}\|_{K,\infty}\leq\sqrt{7h_{max}\|\ell_i\|}\|g_k-g_{k-1}\|^{\frac{1}{2}}_{K,\infty}.
\end{eqnarray}
It is then straightforward to deduce the convergence result of the lemma.
\end{proof}

\subsection{The map $\Phi\mapsto\Phi^c$}
Since the formal corrugation process is defined by a pointwise formula~\eqref{eq:FCP}, it induces a map $\phi\mapsto \phi^c$ from (a subspace of) $Mono(\R^2,\E^3)$ to itself. Precisely, an index $i\in\{1,2,3\}$, an inner product $g$ on $\R^2$ and a corrugation number $N$ being given, then the map $\phi\mapsto \phi^c=FCP_i(\phi,g,N)$
is well defined on  
\[\mathcal{D}(g,i):=\{\phi\in Mono(\R^2,\E^3)\,|\, H_i(g-\pullback{\phi})\geq 0 \}.
  \]
Observe that the subspace $\mathcal{D}(g,i)$ is not compact because $Mono(\R^2,\E^3)$ is open in $\mathcal{L}(\R^2,\E^3)$. 
For any monomorphism $\phi:\R^2\to\E^3$, we set
$$\|\phi\|:=\sup_{v\in\R^2\setminus\{0\}}\frac{\|\phi(v)\|}{\|v\|}\quad\mbox{and}\quad \lambda(\phi):=\inf_{v\in\R^2\setminus\{0\}}\frac{\|\phi(v)\|}{\|v\|}>0.$$
Given $0<\lambda\leq\Lambda$, we consider the compact subspace $\mathcal{K}(\lambda,\Lambda,g,i)$ of $Mono(\R^2,\E^3)$ defined by
$$\mathcal{K}(\lambda,\Lambda,g,i):=\mathcal{D}(g,i)\cap\{\phi\in Mono(\R^2,\E^3)\,|\, \lambda\leq\lambda(\phi)\mbox{ and } \|\phi\|\leq\Lambda\}.$$

\begin{lemme}
\label{lem:FCP-Holder-ponctuel}
Let $0<\lambda\leq\Lambda$, $i$ and $g$ be fixed and let $\mathcal{K}=\mathcal{K}(\lambda,\Lambda,g,i)$. There exists a constant $C=C(\lambda,\Lambda,g)>0$ such that
$$\forall \phi_1,\phi_2\in \mathcal{K},\qquad\|\phi_2^c-\phi_1^c\|\leq C\|\phi_2-\phi_1\|^{\frac{1}{2}}.$$
In other words, the map $\phi\mapsto\phi^c$ is $\frac{1}{2}$-Hölder on $\mathcal{K}$.
\end{lemme}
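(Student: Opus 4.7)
By equation~\eqref{eq:Phi-c} we may write $\phi^c=\phi+G(\phi)\otimes\ell$ where $G(\phi):=\mathbf{z}(\phi)-\phi(u(\phi))$, so it suffices to show that $G$ itself is $\tfrac{1}{2}$-H\"older on $\mathcal{K}$: on the compact set $\mathcal{K}$, the identity map $\phi\mapsto\phi$ is Lipschitz, hence also $\tfrac{1}{2}$-H\"older, and $\|\phi_{2}^{c}-\phi_{1}^{c}\|\leq\|\phi_{2}-\phi_{1}\|+\|\ell\|\,\|G(\phi_{2})-G(\phi_{1})\|$ follows from the norm of a rank-one tensor. My plan is to isolate the Lipschitz ingredients of $G$ from the single non-Lipschitz one, namely the amplitude $\alpha$; the exponent $\tfrac{1}{2}$ appears in the end because $J_0^{-1}$ has a square-root singularity at the value $y=1$, and the whole argument is organised around this observation.

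First I would verify that on the compact subset $\mathcal{K}$ the following quantities are Lipschitz functions of $\phi$: the vector $u(\phi)$, which, for a fixed basis $(v_0,w)$ of $\R^2$ with $\ell(v_0)=1$ and $w\in\ker\ell$, is given by $u=v_0-\frac{\langle\phi(v_0),\phi(w)\rangle}{\|\phi(w)\|^2}w$ (the denominator being bounded below by $\lambda^2\|w\|^2$); the image $\phi(u)$ and its norm (the latter bounded below by $\lambda/\|\ell\|>0$ since $\ell(u)=1$); the coefficient $\eta(\phi)=H_i(g-\pullback{\phi})$, polynomial in the entries of $\phi$; the radius $r(\phi)=\sqrt{\eta+\|\phi(u)\|^2}$; and the corrugation frame vectors $\mathbf{t}(\phi),\mathbf{w}(\phi),\mathbf{n}(\phi)$, obtained by normalizing smooth expressions in $\phi(u),\phi(w)$ whose norms are bounded away from zero on~$\mathcal{K}$.

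The only non-Lipschitz piece is $\alpha(\phi)=J_0^{-1}(y(\phi))$ with $y(\phi):=\|\phi(u)\|/r(\phi)$. The map $\phi\mapsto y(\phi)$ is Lipschitz, and $y$ takes values in a compact subinterval $[y_0,1]$ of $(0,1]$ with $y_0>0$ depending only on $\lambda,\Lambda,g$. However, $J_0^{-1}$ is only $\tfrac{1}{2}$-H\"older on $[y_0,1]$: writing $J_0(\alpha)=1-\tfrac{\alpha^2}{4}+O(\alpha^4)$ near $0$, one obtains $J_0^{-1}(y)\sim 2\sqrt{1-y}$ as $y\to 1^-$, and combining this local estimate with the smoothness (hence Lipschitz property) of $J_0^{-1}$ on each $[y_0,1-\varepsilon]$ yields an estimate $|J_0^{-1}(y_2)-J_0^{-1}(y_1)|\leq C_0\,|y_2-y_1|^{1/2}$ with $C_0=C_0(y_0)$. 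Composition then shows that $\alpha(\phi)$, and consequently $\theta=\alpha\cos(2\pi N\varpi)$, $\cos\theta$, $\sin\theta$ and $J_0(\alpha)$, are $\tfrac{1}{2}$-H\"older in $\phi$ with constants that are \emph{uniform} in $N$ and in the value of $\varpi(p)$, because the outer maps are globally Lipschitz.

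To conclude, using the defining identity $\|\phi(u)\|=rJ_0(\alpha)$ I would rewrite
\[
    G(\phi)=r\bigl(\cos\theta-J_0(\alpha)\bigr)\mathbf{t}+r\sin\theta\,\mathbf{n},
\]
which is a sum of products of bounded $\tfrac{1}{2}$-H\"older scalars by bounded Lipschitz vectors. Since on the bounded-diameter set $\mathcal{K}$ products and sums of bounded H\"older functions are H\"older with the minimum exponent, $G$ is $\tfrac{1}{2}$-H\"older on $\mathcal{K}$ with a constant depending only on $\lambda,\Lambda,g$, and the lemma follows. The main obstacle is really the H\"older analysis of $J_0^{-1}$ near $y=1$; all the rest is a systematic check that the other ingredients enter in a Lipschitz fashion, so that the worst regularity in the composition chain, namely the $\tfrac{1}{2}$-H\"older behavior coming from $\alpha$, is the one that dictates the final exponent.
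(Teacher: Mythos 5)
Your proposal is correct and follows the same overall strategy as the paper: reduce to the map $G(\phi)=\mathbf{z}(\phi)-\phi(u(\phi))$, verify that every ingredient of $G$ depends Lipschitz-continuously on $\phi$ over the compact set $\mathcal{K}$ with the sole exception of the amplitude $\alpha=J_0^{-1}\bigl(\|\phi(u)\|/r\bigr)$, let the $\frac{1}{2}$-H\"older regularity of $J_0^{-1}$ dictate the final exponent, and observe that $|\cos(2\pi N\varpi)|\leq 1$ makes the constant uniform in $N$. Where you genuinely diverge from the paper is in the proof of the key sub-step, namely the $\frac{1}{2}$-H\"older regularity of $J_0^{-1}$ (the paper's Lemma~\ref{lem:J_0-holder}). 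You argue locally: from the expansion $J_0(\alpha)=1-\alpha^2/4+O(\alpha^4)$ you extract $J_0^{-1}(y)\sim 2\sqrt{1-y}$ as $y\to 1^-$, then glue this local square-root estimate with the Lipschitz behavior of $J_0^{-1}$ on $[0,1-\varepsilon]$. The paper instead establishes a global bound $J_0'(x)\leq -x/8$ on $[0,\kappa_0]$ by truncating the alternating series for $-J_1$, integrates to $J_0(x)-J_0(y)\geq\frac{1}{16}(y-x)^2$, and reads off $|J_0^{-1}(u)-J_0^{-1}(v)|\leq 4|u-v|^{1/2}$ with an explicit constant in a single stroke. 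The paper's route is cleaner and self-contained; your gluing argument is valid, but to promote the asymptotic equivalence $J_0^{-1}(y)\sim 2\sqrt{1-y}$ into an actual H\"older estimate near $y=1$ you still need a two-sided bound of the form $c\,\alpha^2\leq 1-J_0(\alpha)\leq C\,\alpha^2$ on a neighborhood of $0$ (which does follow from the same expansion, but should be stated rather than inferred from the $\sim$ notation). Your final rewriting $G(\phi)=r(\cos\theta-J_0(\alpha))\mathbf{t}+r\sin\theta\,\mathbf{n}$ via $\phi(u)=rJ_0(\alpha)\mathbf{t}$ is a clean way to present the conclusion, and is slightly more explicit than the paper's terse ``it is straightforward to obtain the result.''
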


\begin{proof}
In Formula~\eqref{eq:FCP} defining the formal corrugation process, everything depends smoothly on $\phi$ except the amplitude $\alpha$ that involves the inverse function $J_0^{-1}$ and the radius $r$ that involves a square root. It is readily checked that the term under the square root never vanishes on $\mathcal{K}$. However, the argument in the inverse function $J_0^{-1}$ can be equal to one (when $\eta=0$) and for this value the inverse function $J_0^{-1}$ in not differentiable. This prevents $\phi\mapsto\phi^c$ to be Lipschitz on $\mathcal{K}$.  Nevertheless, Lemma~\ref{lem:J_0-holder} below shows that $J_0^{-1}$ is $\frac{1}{2}$-Hölder. From this, it is straightforward to obtain the result of the lemma. Regarding the constant $C$, since the number of corrugations $N$ only appears in the definition of the angle $\theta=\alpha\cos(2\pi N\varpi)$, it disappears when writing an upper bound of the difference, indeed
$$|\theta(\phi_2)-\theta(\phi_1)|\leq |\alpha(\phi_2)-\alpha(\phi_1)|.$$
Therefore, the constant $C(\lambda,\Lambda,g,i)$ is independent of $N$. By taking the maximum when $i\in\{1,2,3\}$ this constant can also be taken independent of $i$.
\end{proof}
\begin{lemme}
\label{lem:J_0-holder}
The inverse  $J_0^{-1}$ of the restriction $J_0|_{[0,\kappa_0[}$, where $\kappa_0=2.404...$ is the first positive zero of the Bessel function $J_0$, is $\frac{1}{2}$-Hölder.
\end{lemme}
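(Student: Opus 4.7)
The plan is to exploit the fact that the only obstruction to $J_0^{-1}$ being Lipschitz is at $y=1$, where the derivative of $J_0$ vanishes like a square root due to the critical point at $x=0$. More precisely, $J_0'(x)=-J_1(x)$, and since $J_1(x)=x/2+O(x^3)$ near the origin, the restriction $J_0|_{[0,\kappa_0[}$ behaves like $x\mapsto 1-x^2/4+O(x^4)$ at $0$. Inverting suggests $J_0^{-1}(y)\sim 2\sqrt{1-y}$ near $y=1$, which is exactly $\tfrac{1}{2}$-Hölder. Away from $y=1$, i.e.\ for $x$ bounded away from $0$, $J_0'$ does not vanish, so $J_0^{-1}$ is smooth there. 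The strategy is to unify both regimes with a single lower bound on $|J_0(x_2)-J_0(x_1)|$ valid on the entire interval.

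The key estimate is a lower bound of the form $J_1(t)\geq c\,t$ on $[0,\kappa_0]$ for some $c>0$. This follows because $t\mapsto J_1(t)/t$ extends continuously at $0$ by the value $1/2$ (from the power series of $J_1$), is continuous and positive on $(0,\kappa_0]$ (the first positive zero of $J_1$ is $\approx 3.83>\kappa_0$), hence attains a positive minimum $c$ on the compact set $[0,\kappa_0]$.

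From this, the proof is essentially one line. Given $0\leq x_1<x_2<\kappa_0$ with $y_i:=J_0(x_i)$, I compute
\begin{equation*}
    y_1-y_2 \;=\; \int_{x_1}^{x_2} -J_0'(t)\,\mathrm{d}t \;=\; \int_{x_1}^{x_2} J_1(t)\,\mathrm{d}t \;\geq\; c\int_{x_1}^{x_2} t\,\mathrm{d}t \;=\; \frac{c}{2}(x_2-x_1)(x_2+x_1),
\end{equation*}
and since $x_1\geq 0$ gives $x_2+x_1\geq x_2-x_1$, this yields $y_1-y_2\geq \frac{c}{2}(x_2-x_1)^2$. Rearranging and recalling $x_i=J_0^{-1}(y_i)$ provides
\begin{equation*}
    \bigl|J_0^{-1}(y_2)-J_0^{-1}(y_1)\bigr| \;\leq\; \sqrt{2/c}\;\bigl|y_2-y_1\bigr|^{1/2},
\end{equation*}
which is the $\tfrac{1}{2}$-Hölder property on $]0,1]$.

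There is no real obstacle here. The only point worth double-checking is the uniform lower bound $J_1(t)\geq c\,t$, which rests on the elementary facts about the first zeros of $J_0$ and $J_1$ recalled above. The factor $(x_2+x_1)\geq (x_2-x_1)$ is the crucial algebraic trick that degrades a linear estimate for $J_1$ near $0$ into a quadratic lower bound on the difference of $J_0$ values, and this is precisely the expected square-root behavior of the inverse at the critical point.
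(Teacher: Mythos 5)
Your proposal is correct and its skeleton is identical to the paper's: both reduce the lemma to a linear lower bound $-J_0'(t)=J_1(t)\geq c\,t$ on $[0,\kappa_0]$, integrate to get $J_0(x_1)-J_0(x_2)\geq\frac{c}{2}(x_2^2-x_1^2)\geq\frac{c}{2}(x_2-x_1)^2$, and invert to conclude $\frac{1}{2}$-Hölder continuity. The difference lies entirely in how the constant $c$ is obtained. The paper works with the alternating series of $J_1$: it checks the ratio $a_{k+1}(x)/a_k(x)<1$ on $[0,\kappa_0]$, invokes the alternating series test to truncate after two terms, and verifies by hand that the truncation is $\leq -x/8$, giving the explicit value $c=1/8$ and the explicit Hölder constant $4$. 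You instead use a soft compactness argument: $t\mapsto J_1(t)/t$ extends continuously to $[0,\kappa_0]$ with value $1/2$ at $0$ and is strictly positive there because the first zero of $J_1$ ($\approx 3.83$) lies beyond $\kappa_0$, so it attains a positive minimum. Your route is shorter and avoids the explicit polynomial estimate, at the cost of being non-constructive and of invoking the classical fact that the first zero of $J_1$ exceeds the first zero of $J_0$ (an interlacing property the paper's self-contained series argument does not need). Both are complete and correct; the paper's version is preferable if one wants an explicit Hölder constant, yours if one wants the leanest possible argument.
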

\begin{proof}
  We have the classical series expansion
  $J'_0(x) =-J_1(x)= \sum_{k=0}^\infty (-1)^{k+1} a_k(x)$ with $a_k(x)=\frac{x^{2k+1}}{2^{2k+1}k!(k+1)!}$. We compute for $0\leq x\leq \kappa_0$  and $k\geq 0$:
  \[\frac{a_{k+1}(x)}{a_k(x)} = \frac{x^2}{4(k+1)(k+2)} \leq \frac{\kappa_0^2}{8}< 1
    \]
 It follows that the series of $J'_0$
 satisfies the alternating series test for every $x\in [0,\kappa_0]$. Truncating the series after the second term, we thus get
 \[J'_0(x)\leq P(x) \quad \text{ with } \quad P(x)=-\frac{x}{2} +\frac{x^3}{16}.
 \]
 We easily check that $P(x) + x/8$ is non positive over $[0,\kappa_0]$, whence $J'_0(x) \leq -x/8$. 
 By integrating, we deduce for $0\leq x<y\leq \kappa_0$:
 \[ J_0(x) - J_0(y) \geq \frac{1}{16}(y^2 - x^2) \geq  \frac{1}{16}(y-x)^2
 \]
 We conclude that for all $u,v\in [0,1]$:
  \[ |J_0^{-1}(u)-J_0^{-1}(v)| \leq 4|u-v|^{1/2}.
    \]
\end{proof}

\noi
We denote by $\Gamma Mono(\R^2,\E^3)$ the space of monomorphism fields $\Phi:C\to Mono(\R^2,\E^3)$ over $C$. For any compact set $K\subset C^*$, we also set
$$\|\Phi\|_{K,\infty}:=\sup_{p\in K}\|\Phi(p)\|\quad\mbox{and}\quad \lambda_K(\Phi):=\inf_{p\in K}\lambda(\Phi(p)).$$
Given a metric $g$ on $C$, the map $\Phi\mapsto\Phi^c=FCP_i(\Phi,g,N)$ is well defined on
$$\Gamma\mathcal{D}(g,i,K):=\{\Phi\in \Gamma Mono(\R^2,\E^3)\,|\, \forall p\in K, H_i(g_p-\pullback{\Phi(p)})\geq 0 \}.$$
Similarly as above, $0<\lambda\leq\Lambda$ being given, we consider the compact subspace $\Gamma\mathcal{K}=\Gamma\mathcal{K}(\lambda,\Lambda,g,i,K)$ defined by
$$\Gamma\mathcal{K}:=\Gamma\mathcal{D}(g,i,K)\cap\{\Phi\in \Gamma Mono(\R^2,\E^3)\,|\, \lambda\leq\lambda_K(\Phi)\mbox{ and } \|\Phi\|_{K,\infty}\leq\Lambda\}.$$

\begin{lemme}
\label{lem:FCP-Holder-K}
Let $0<\lambda\leq\Lambda,$ $i$, $g$ and let $K\subset C^*$ be a compact set. There exists a constant $C_K=C_K(\lambda,\Lambda,g)>0$ such that
$$\forall \Phi_1,\Phi_2\in \Gamma\mathcal{K},\qquad\|\Phi_2^c-\Phi_1^c\|_{K,\infty}\leq C_K\|\Phi_2-\Phi_1\|^{\frac{1}{2}}_{K,\infty}.$$
In other words, the map $\Phi\mapsto\Phi^c$ is $\frac{1}{2}$-Hölder on $\Gamma\mathcal{K}$.
\end{lemme}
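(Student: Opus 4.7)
The plan is to upgrade the pointwise Hölder estimate of Lemma~\ref{lem:FCP-Holder-ponctuel} to a uniform estimate over the compact set $K$ by a standard compactness argument. First, I would note that the formal corrugation process is defined pointwise by formula~\eqref{eq:FCP}, so that $(\Phi_j^c)(p) = \phi_j(p)^c$ where $\phi_j(p) := \Phi_j(p)$ is regarded as a single monomorphism acted upon by the pointwise process with the inner product $g_p$ and corrugation number $N$. Hence the whole question reduces to invoking the pointwise lemma at every $p \in K$ and controlling the dependence of the constant on the base point.

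Next, I would verify that for every $p \in K$, the pair $(\Phi_1(p), \Phi_2(p))$ lies in the pointwise compact set $\mathcal{K}(\lambda, \Lambda, g_p, i)$. This is immediate from the definition of $\Gamma\mathcal{K}$: the bounds $\lambda \leq \lambda(\Phi_j(p)) \leq \|\Phi_j(p)\| \leq \Lambda$ and $H_i(g_p - \pullback{\Phi_j(p)}) \geq 0$ hold by hypothesis. Lemma~\ref{lem:FCP-Holder-ponctuel} then yields
\[
    \|\Phi_2^c(p) - \Phi_1^c(p)\| \leq C(\lambda, \Lambda, g_p) \, \|\Phi_2(p) - \Phi_1(p)\|^{1/2}.
\]

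The key step is then to show that $p \mapsto C(\lambda, \Lambda, g_p)$ is bounded on $K$. Inspecting the proof of Lemma~\ref{lem:FCP-Holder-ponctuel}, the pointwise constant arises from two sources: the $\tfrac{1}{2}$-Hölder constant $4$ of $J_0^{-1}$ supplied by Lemma~\ref{lem:J_0-holder}, which is a universal number, and the Lipschitz constants of the smooth ingredients $r$, $\alpha$, $\mathbf{t}$, $\mathbf{n}$, $\mathbf{z}$ viewed as functions of $\phi$; these depend continuously on the entries of $g_p$ (all denominators stay bounded away from zero thanks to $\lambda \leq \lambda(\phi)$ and $\|\phi\| \leq \Lambda$). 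Since $g$ is a smooth metric on $C$ and $K$ is compact, the map $p \mapsto g_p$ is continuous with image in a compact set of inner products on $\R^2$, and therefore $p \mapsto C(\lambda, \Lambda, g_p)$ is continuous and achieves a finite maximum
\[
    C_K := \sup_{p \in K} C(\lambda, \Lambda, g_p) < +\infty.
\]

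Taking the supremum over $p \in K$ in the pointwise estimate yields
\[
    \|\Phi_2^c - \Phi_1^c\|_{K,\infty} \leq C_K \, \|\Phi_2 - \Phi_1\|_{K,\infty}^{1/2},
\]
which is the claim. The only delicate point is the verification that the pointwise constant $C(\lambda, \Lambda, g_p)$ genuinely depends continuously on $g_p$; this is the main obstacle, but it reduces to inspecting the structure of the proof of Lemma~\ref{lem:FCP-Holder-ponctuel} and noting that every intermediate bound there is a continuous function of the coefficients of $g_p$ on the compact region carved out by the conditions $\lambda \leq \lambda(\phi) \leq \|\phi\| \leq \Lambda$ and $H_i(g_p - \pullback{\phi}) \geq 0$.
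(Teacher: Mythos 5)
Your proposal is correct and takes essentially the same route as the paper: the paper's own proof simply invokes Lemma~\ref{lem:FCP-Holder-ponctuel} pointwise and sets $C_K(\lambda,\Lambda,g)=\sup_{p\in K}C(\lambda,\Lambda,g_p)$. You merely make explicit the two verifications that the paper leaves implicit, namely that $\Phi_j(p)\in\mathcal{K}(\lambda,\Lambda,g_p,i)$ for each $p\in K$ and that the supremum is finite by continuity of $p\mapsto C(\lambda,\Lambda,g_p)$ on the compact $K$.
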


\begin{proof} The result stated in this lemma is a corollary of Lemma~\ref{lem:FCP-Holder-ponctuel}. The constant $C_K$ is given by $C_K(\lambda,\Lambda,g)=\sup_{p\in K}C(\lambda,\Lambda,g_p).$
\end{proof}

\subsection{Comparing $\Phi_{k,i}$ to $df_{k,i}$}
\label{subsec:comparing_Phi_df}
\noi
In the following, we consider a sequence $(f_{k,i})_{k,i}$ defined by the 3-corrugated process~\eqref{eq:CP_i} and its formal analogue $(\Phi_{k,i})_{k,i}$. Recall from Section~\ref{subsec:sequence_metrics_maps} that we introduced a decreasing sequence of positive numbers $(\tau_k)_k$ guiding the choice of the corrugation numbers. We assume that~\eqref{eq:tau_k-bounded-lambda-f_0} holds and that $\tau_1<1.$ Given any compact $K\subset C^*$, we introduce a sequence $(C_{k}(K))_{k}$ defined by
$$C_k(K):=C_K\left(\frac{\lambda_K(df_0)}{2},\|g_{k+1}\|_{K,\infty}^{\frac{1}{2}},g_{k}\right)$$
where $C_K$  appears in Lemma~\ref{lem:FCP-Holder-K}.

\begin{lemme}
\label{lem:Phi_ki-f_ki-lambda-Lambda} 
Let $K\subset C^*$ a compact set. For all $k\in\N^*$, $i\in\{1,2,3\},$ we have
\begin{eqnarray*}
    \|\Phi_{k,i}-(df_{k,i})^c\|_{K,\infty} \leq C_k(K) \|\Phi_{k,i-1}-df_{k,i-1}\|_{K,\infty}^{\frac{1}{2}}.
\end{eqnarray*}
\end{lemme}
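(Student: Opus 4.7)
The plan is to recognize both $\Phi_{k,i}$ and $(df_{k,i})^c$ — the latter understood as the target differential $L_{k,i}=FCP_i(df_{k,i-1},g_k,N_{k,i})$ of the ordinary corrugation step — as images of \emph{the same} formal corrugation operator $FCP_i(\cdot, g_k, N_{k,i})$ applied to the two different inputs $\Phi_{k,i-1}$ and $df_{k,i-1}$. The stated inequality then reduces directly to the $\tfrac{1}{2}$-Hölder estimate of Lemma~\ref{lem:FCP-Holder-K}, provided both inputs lie in a common compact set $\Gamma\mathcal{K}(\lambda,\Lambda,g_k,i,K)$ whose parameters match those used in the definition of $C_k(K)$.

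Concretely, I would take $\lambda = \tfrac{1}{2}\lambda_K(df_0)$ and $\Lambda = \|g_{k+1}\|_{K,\infty}^{1/2}$. For $df_{k,i-1}$, the lower bound $\lambda_K(df_{k,i-1}) \geq \lambda$ comes from the pointwise argument underlying~\eqref{eq:lambda-df_ki} combined with the standing assumption $\Tau \leq \tfrac{1}{2}\lambda_C(df_0)$ of~\eqref{eq:tau_k-bounded-lambda-f_0}; the upper bound $\|df_{k,i-1}\|^2 = \|\pullback{f_{k,i-1}}\| \leq \|g_{k+1}\|_{K,\infty}$ follows from Lemma~\ref{lem:encadrement-pullback-2}, which gives $\pullback{f_{k,i-1}} \leq g_k + \lambda^{-1}(g_k-g_{k-1}) \leq g_{k+1}$ once the corrugation numbers are chosen large (e.g.\ $\lambda \geq 100$); and the positivity condition $H_i(g_k - \pullback{f_{k,i-1}}) \geq 0$ defining $\mathcal{D}(g_k,i)$ is exactly the positivity of $\eta_{k,i}$, which is enforced by Lemma~\ref{lem:F_ki_well_defined}.

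For $\Phi_{k,i-1}$, the key ingredient is Lemma~\ref{lem:sequence-Phi_ki-well-defined}: this field is $\mu^\Phi_{k,i-1}$-isometric with $g_{k-1} \leq \mu^\Phi_{k,i-1} \leq g_k$, from which the norm bounds $\lambda(\Phi_{k,i-1})^2 \geq \lambda(\pullback{f_0})$ and $\|\Phi_{k,i-1}\|^2 \leq \|g_k\| \leq \|g_{k+1}\|$ follow immediately. The positivity condition is equally clean: using the dual basis relation $H_j(\ell_i \otimes \ell_i) = \delta_{ij}$, the first $i-1$ summands of $\mu^\Phi_{k,i-1}$ are annihilated by $H_i$, so that $H_i(g_k - \mu^\Phi_{k,i-1}) = H_i(g_k - g_{k-1}) \geq 0$ by property~\ref{it:P5}.

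With both inputs shown to lie in $\Gamma\mathcal{K}(\tfrac{1}{2}\lambda_K(df_0), \|g_{k+1}\|_{K,\infty}^{1/2}, g_k, i, K)$, Lemma~\ref{lem:FCP-Holder-K} produces the estimate directly, with Hölder constant precisely $C_k(K)$. There is no real obstacle: the technical heart of the argument has already been packaged into Lemma~\ref{lem:FCP-Holder-K}, whose main point — crucially exploited here — is that the Hölder constant is \emph{independent of $N_{k,i}$} (the corrugation frequency enters only through $\cos(2\pi N\varpi)$ in the definition of $\theta$, and is absorbed by the bound $|\theta(\phi_2)-\theta(\phi_1)| \leq |\alpha(\phi_2)-\alpha(\phi_1)|$). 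The only care required is the verification of the membership conditions above, all of which are straightforward consequences of previously established facts.
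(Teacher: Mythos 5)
Your proof takes the same route as the paper's: both inputs $\Phi_{k,i-1}$ and $df_{k,i-1}$ are placed in the common compact $\Gamma\mathcal{K}\left(\tfrac{1}{2}\lambda_K(df_0),\|g_{k+1}\|_{K,\infty}^{1/2},g_k,i,K\right)$ (via Lemma~\ref{lem:sequence-Phi_ki-well-defined} for the formal side and via~\eqref{eq:lambda-df_ki} and shortness of $f_{k,i-1}$ for $g_{k+1}$ for the holonomic side), and then Lemma~\ref{lem:FCP-Holder-K} is applied. You have correctly read $(df_{k,i})^c$ as $L_{k,i}=FCP_i(df_{k,i-1},g_k,N_{k,i})$, which also accounts for the fact that the relevant membership conditions concern the indices $(k,i-1)$ rather than $(k,i)$ as written in the paper's proof; your verification of the positivity condition $H_i(g_k-\mu^\Phi_{k,i-1})=H_i(g_k-g_{k-1})$ via the dual basis relation is exactly the computation already packaged in Lemma~\ref{lem:sequence-Phi_ki-well-defined}.
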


\begin{proof}
By Lemma~\ref{lem:sequence-Phi_ki-well-defined}, the map $\Phi_{k,i}$ is isometric for $\mu_{k,i}^\Phi$. Therefore,
$$\|\Phi_{k,i}(p)\|=\sup_{v\in\R^2\setminus\{0\}}\frac{\|\Phi_{k,i}(p)(v)\|}{\|v\|}=\sup_{v\in\R^2\setminus\{0\}}\frac{\sqrt{\mu_{k,i}^\Phi(p)(v,v)}}{\|v\|}.$$
From Lemma~\ref{lem:sequence-Phi_ki-well-defined} we have  $g_{k-1}\leq\mu_{k,i}^\Phi\leq g_k$
and thus 
$$\|\Phi_{k,i}(p)\|\leq\sup_{v\in\R^2\setminus\{0\}}\frac{\sqrt{g_{k}(p)(v,v)}}{\|v\|}=\|g_k(p)\|^{\frac{1}{2}}.$$
It follows that $\|\Phi_{k,i}\|_{K,\infty}\leq \|g_{k}\|^{\frac{1}{2}}_{K,\infty}$. Similarly, using the fact that $\Phi_{k,i}$ is $\mu_{k,i}^\Phi$-isometric, we obtain
$\lambda_K(\Phi_{k,i})\geq \lambda_K(df_0).$
By construction, the map $df_{k,i}$ is short for $g_{k+1}$ and thus $\|df_{k,i}\|_{K,\infty}\leq \|g_{k+1}\|^{\frac{1}{2}}_{K,\infty}$. From~\eqref{eq:lambda-df_ki}, we also have $\lambda_K(df_{k,i})\geq\frac{1}{2}\lambda_K(df_{0}).$ Finally, we have obtained that $\Phi_{k,i}$ and $df_{k,i}$ both belong to $\Gamma\mathcal{K}(\frac{1}{2}\lambda_K(df_0),\|g_{k+1}\|_{K,\infty}^{\frac{1}{2}},g_k,i,K).$ We then apply Lemma~\ref{lem:FCP-Holder-K} 
 to conclude.
\end{proof}

\noi
Let $M_{k,i}(K)$ be the sequence of constants defined inductively by
\[M_{1,1}(K):=\frac{1}{3}\quad\mbox{and}\quad M_{k,i}(K):= C_k(K)M_{k,i-1}^{1/2}(K) + \frac{1}{3}.
  \]

\begin{lemme}
\label{lem:comparison-monomorphism-immersion-finite}
Assume that $\tau_1<1$. For all $k\in\N^*$ and $i\in\{1,2,3\}$, we have
$$\|\Phi_{k,i}-df_{k,i}\|_{K,\infty}\leq M_{k,i}(K)\tau_1^{\kappa_{k,i}}\quad\mbox{where}\quad \kappa_{k,i}:=2^{-(3k+i-4)}.$$
\end{lemme}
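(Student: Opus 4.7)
The plan is to set $a_{k,i}:=\|\Phi_{k,i}-df_{k,i}\|_{K,\infty}$ and establish a one-step recursion of the form
\[ a_{k,i}\;\leq\; C_k(K)\,a_{k,i-1}^{1/2}+\tfrac{\tau_k}{3}, \]
then propagate it by induction on the pair $(k,i)$ (ordered lexicographically with the usual convention $\Phi_{k,0}=\Phi_{k-1,3}$ and $f_{k,0}=f_{k-1,3}$). To derive the recursion, I would insert the target differential $L_{k,i}=FCP_i(df_{k,i-1},g_k,N_{k,i})$ between $\Phi_{k,i}$ and $df_{k,i}$ and apply the triangle inequality. Lemma~\ref{lem:Phi_ki-f_ki-lambda-Lambda} (Hölder continuity of the formal corrugation process with the constant $C_k(K)$) controls $\|\Phi_{k,i}-L_{k,i}\|_{K,\infty}$ by $C_k(K)\,a_{k,i-1}^{1/2}$, while condition~\eqref{eq:LC3} on the corrugation numbers gives $\|df_{k,i}-L_{k,i}\|_{K,\infty}\leq \tau_k/3$. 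Adding these yields the stated recursion.

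For the base case $(k,i)=(1,1)$ one has $a_{1,0}=\|df_0-\Phi_0\|_{K,\infty}=0$ by the very definition $\Phi_0:=df_0$, so $a_{1,1}\leq \tau_1/3=M_{1,1}(K)\,\tau_1^{\kappa_{1,1}}$ since $\kappa_{1,1}=2^0=1$. For the inductive step, assume the bound holds at $(k,i-1)$ (or at $(k-1,3)$ if $i=1$). Observing the key identity $\kappa_{k,i}=\kappa_{k,i-1}/2$ (which holds across the transition $k\to k+1$ since $\kappa_{k+1,1}=2^{-3k}=\kappa_{k,3}/2$), the recursion gives
\[ a_{k,i}\leq C_k(K)\,M_{k,i-1}^{1/2}(K)\,\tau_1^{\kappa_{k,i-1}/2}+\tfrac{\tau_k}{3}=C_k(K)\,M_{k,i-1}^{1/2}(K)\,\tau_1^{\kappa_{k,i}}+\tfrac{\tau_k}{3}. \]

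The only subtlety is to absorb the additive $\tau_k/3$ into the $\tau_1^{\kappa_{k,i}}$ factor with constant $1/3$. Since $(\tau_k)_k$ is decreasing, $\tau_k\leq\tau_1<1$; and since $3k+i-4\geq 0$ whenever $k\geq 1$ and $i\geq 1$, one has $\kappa_{k,i}\leq 1$, hence $\tau_1\leq \tau_1^{\kappa_{k,i}}$. Consequently $\tau_k/3\leq \tau_1^{\kappa_{k,i}}/3$ and the previous display becomes
\[ a_{k,i}\leq\bigl(C_k(K)\,M_{k,i-1}^{1/2}(K)+\tfrac{1}{3}\bigr)\tau_1^{\kappa_{k,i}}=M_{k,i}(K)\,\tau_1^{\kappa_{k,i}}, \]
which closes the induction. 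I do not expect any genuine obstacle: the interpolation inequality from the Hölder continuity of $FCP$ is the nontrivial ingredient and it is already packaged in Lemma~\ref{lem:Phi_ki-f_ki-lambda-Lambda}; the rest is bookkeeping of the exponents $\kappa_{k,i}$ and the recursion defining $M_{k,i}(K)$.
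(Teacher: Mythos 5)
Your proposal is correct and follows essentially the same route as the paper: insert the target differential $L_{k,i}=(df_{k,i-1})^c$ between $\Phi_{k,i}$ and $df_{k,i}$, bound $\|\Phi_{k,i}-L_{k,i}\|$ via Lemma~\ref{lem:Phi_ki-f_ki-lambda-Lambda} and $\|L_{k,i}-df_{k,i}\|$ via~\eqref{eq:LC3}, then close the induction by halving $\kappa$ and absorbing $\tau_k/3$ into $\tau_1^{\kappa_{k,i}}/3$ using $\tau_k\le\tau_1\le\tau_1^{\kappa_{k,i}}$. The only surface difference is notational (your recursion in $a_{k,i}$ versus the paper's inline display); the mathematics is identical.
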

\noi

\begin{proof}
By induction. We have $\Phi_0=df_0$ and $\Phi_{1,1}-df_{1,1} = \di (df_0)^c-df_{1,1}.$ 
Since $(df_0)^c=L_{1,1}$, it follows from~\eqref{eq:LC3} that
\begin{eqnarray*}
\|\Phi_{1,1}-df_{1,1}\|_{K,\infty}\leq \frac{\tau_1}{3}=M_{1,1}(K)\tau_1.
\end{eqnarray*}
Assuming the result of the lemma for $(k,i-1)$, we have at step $(k,i):$
$$\begin{array}{lll}
\Phi_{k,i}-df_{k,i} & = & (\Phi_{k,i}-(df_{k,i-1})^c)+((df_{k,i-1})^c-df_{k,i})\vspace*{1mm}\\
 & = & (\Phi_{k,i-1}^c-(df_{k,i-1})^c)+((df_{k,i-1})^c-df_{k,i})\vspace*{1mm}\\
  \end{array}$$ 
Since $(df_{k,i-1})^c=L_{k,i}$, it follows from~\eqref{eq:LC3} and Lemma~\ref{lem:Phi_ki-f_ki-lambda-Lambda} that 
$$\|\Phi_{k,i}-df_{k,i}\|_{K,\infty} \leq  C_k(K)\|\Phi_{k,i-1}-df_{k,i-1}\|_{K,\infty}^{\frac{1}{2}}+\frac{\tau_{k}}{3}.$$
By the induction hypothesis and since $\tau_k\leq\tau_1\leq\tau_1^{\kappa_{k,i-1}/2}$, we deduce
$$\begin{array}{lll}
\|\Phi_{k,i}-df_{k,i}\|_{K,\infty} 
& \leq & \di C_k(K)M_{k,i-1}(K)^{1/2}\tau_{1}^{\kappa_{k,i-1}/2}+\frac{\tau_{k}}{3}
\vspace*{1mm}\\
& \leq & \di \left(C_k(K)M_{k,i-1}^{1/2}(K)+\frac{1}{3}
\right)\tau_1^{\kappa_{k,i-1}/2}=M_{k,i}(K)\tau_1^{\kappa_{k,i}}.
  \end{array}$$
\end{proof}

\subsection{Proof of Theorem~\ref{thm:C1-density}}

Let $k^*\in\N$. We write the difference $\Phi_{\infty}-df_{\infty}$ as
\[\|\Phi_{\infty}-df_{\infty}\|_{K,\infty}\leq \|\Phi_{\infty}-\Phi_{k^*}\|_{K,\infty}+\|\Phi_{k^*}-df_{k^*}\|_{K,\infty}+\|df_{k^*}-df_{\infty}\|_{K,\infty}\]
where we have used the notation $f_k=f_{k,3}$ and $\Phi_k=\Phi_{k,3}.$
Let $\varepsilon>0$. By Lemma~\ref{lem:convergence_Phi_k_i} and by the proof of Proposition~\ref{prop:Nash-Kuiper}, we can choose $k^*$ so that
\[\|\Phi_{\infty}-\Phi_{k^*}\|_{K,\infty}\leq \frac{\varepsilon}{3} \quad \text{ and }\quad  \|df_{\infty}-df_{k^*}\|_{K,\infty}  \leq \frac{\varepsilon}{3}.
  \]
Choosing $\tau_1\leq (\frac{\varepsilon}{3M_{k,i}(K)})^{1/\kappa_{k,i}}$, we have by Lemma~\ref{lem:comparison-monomorphism-immersion-finite}, that 
$$\|\Phi_{k^*}-df_{k^*}\|_{K,\infty}\leq\frac{\varepsilon}{3}.$$
It follows that $\|\Phi_{\infty}-df_{\infty}\|_{K,\infty}\leq\varepsilon$, which ends the proof of Theorem~\ref{thm:C1-density}.

\section{Gauss map}
\label{sec:Gauss_map}
\subsection{The corrugation matrices}
\label{subsec:corrugation-matrices}
Let $(f_{k,i})_{k,i}$ be a sequence of maps generated by a 3-corrugated process. The data $(f_{k,i-1},\ell_i)$ where $\ell_i=d\varpi_i$ allows to define a field of corrugation frames $\mathbf{F}_{k,i-1}=(\mathbf{t}_{k,i-1},\mathbf{w}_{k,i-1},\mathbf{n}_{k,i-1})$ as in Section~\ref{subsec:corrugation-frame}. More precisely, let $(v_i,w_i)$ be a direct basis of $\R^2$ such that $\ell_i(v_i)>0$ and $w_i\in \ker\ell_i$, we set
\[ \mathbf{w}_{k,i-1} := \frac{df_{k,i-1}(w_i)}{\|df_{k,i-1}(w_i)\|},\qquad \mathbf{n}_{k,i-1} :=\frac{df_{k,i-1}(v_i)\wedge df_{k,i-1}(w_i)}{\|df_{k,i-1}(v_i)\wedge df_{k,i-1}(w_i)\|}, 
  \]
and $\mathbf{t}_{k,i-1}$ is chosen so that $\mathbf{F}_{k,i-1}$ is a direct orthonormal frame.

For each $(k,i)$ there exists a field of orthogonal matrices $\mathcal{M}_{k,i}:C\to SO(3)$ to pass from one frame to the other:
\begin{eqnarray}
\label{eq:corrugation-matrix}
\mathbf{F}_{k,i}=\mathbf{F}_{k,i-1}\cdot\mathcal{M}_{k,i}.
\end{eqnarray}
We call $\mathcal{M}_{k,i}$ a \emph{corrugation matrix}. We introduce an 
 intermediary frame $\mathbf{F}_{k,i-\frac{1}{2}}$ defined by
\begin{eqnarray}
  \label{eq:intermediary_frame}
{\bf w}_{k,i-\frac{1}{2}}:=\frac{df_{k,i}(w_i)}{\|df_{k,i}(w_i)\|}, \quad {\bf n}_{k,i-\frac{1}{2}} := {\bf n}_{k,i} \quad\mbox{and}\quad{\bf t}_{k,i-\frac{1}{2}} := {\bf w}_{k,i-\frac{1}{2}}\wedge \di {\bf n}_{k,i-\frac{1}{2}}.
\end{eqnarray}
Each corrugation matrix thus 
decomposes into a product of two orthogonal matrices
$\mathcal{M}_{k,i}=\mathcal{L}_{k,i}\mathcal{R}_{k,i}$ where $\mathcal{L}_{k,i}$ and $\mathcal{R}_{k,i}$ are defined by
\[\mathbf{F}_{k,i-\frac{1}{2}}=\mathbf{F}_{k,i-1}\cdot \mathcal{L}_{k,i}\quad\mbox{and}\quad\mathbf{F}_{k,i}=\mathbf{F}_{k,i-\frac{1}{2}}\cdot \mathcal{R}_{k,i}.
\]
We have
\[\mathcal{R}_{k,i}=\left(\begin{array}{ccc}
\cos\beta_{k,i} & -\sin\beta_{k,i} & 0\\
\sin\beta_{k,i} & \cos\beta_{k,i} & 0\\
  0 & 0 & 1
\end{array}\right)\]
where $\beta_{k,i}$ is the angle between $df_{k,i}(w_{i})$ and $df_{k,i}(w_{i+1})$. Since $f_{k,i}$ is $C^1$ converging to an $h$-isometric map, this angle converges toward the $h$-angle between  $w_{i}$ and $w_{i+1}$. Regarding $\mathcal{L}_{k,i}$ it was shown in~\cite[Theorem 21]{ensaios} that
\begin{eqnarray}
\label{eq:L_ki}
\mathcal{L}_{k,i}=\left(\begin{array}{ccc}
\cos\theta_{k,i} & 0 & -\sin\theta_{k,i}\\
   0 & 1 & 0\\
\sin\theta_{k,i} & 0 & \cos\theta_{k,i}\\
\end{array}\right)+O\left(\frac{1}{N_{k,i}}\right)
\end{eqnarray}
where $\theta_{k,i}=\alpha_{k,i}\cos(2\pi N_{k,i}\varpi_{i})$. Asymptotically, the corrugation matrix thus looks like a product of two rotations with perpendicular axis, the first one reflecting the effect of the corrugations in the normal direction while the second is changing the direction of the wavefront in preparation for the next corrugation. 
It is readily seen from~\cite[Theorem 23]{ensaios} that the product
\[\mathcal{M}_{\infty}:=\mathcal{M}_{1,1}\mathcal{M}_{1,2}\mathcal{M}_{1,3}\mathcal{M}_{2,1}\cdots=\prod_{k=1}^{\infty}\left(\prod_{i=1}^3 \mathcal{M}_{k,i}\right)\]
is converging toward a continuous map  $\mathcal{M}_{\infty}:C^*\to SO(3)$ (beware of the unusual order of this product). As $k$ tends to infinity, the frame $\mathbf{F}_{k,0}$ converges to a frame $\mathbf{F}_{\infty}=(\mathbf{t}_{\infty},\mathbf{w}_{\infty},\mathbf{n}_{\infty})$ adapted to $f_{\infty}$. Writing $\mathbf{F}_0$ for $\mathbf{F}_{1,0}$, we then have by iterating~\eqref{eq:corrugation-matrix}:
\[\mathbf{F}_{\infty}=\mathbf{F}_0\cdot\mathcal{M}_{\infty}.\]
The normal map $\mathbf{n}_{\infty}$ of $f_{\infty}$ is thus given by
\begin{equation}
\label{eq:F0_M_e3}
\mathbf{n}_{\infty}=\mathbf{F}_0\cdot\mathcal{M}_{\infty}\cdot \mathbf{e}_3
\end{equation}
where $\mathbf{e}_3$ the last vector of the canonical basis of $\E^3$.

\subsection{The formal corrugation matrices}
In analogy with the 3-corrugation process, the sequence $(\Phi_{k,i})_{k,i}$ defines a sequence of frames $(\mathbf{F}^\Phi_{k,i})_{k,i}$ and corrugation matrices $(\mathcal{M}^\Phi_{k,i})_{k,i}$ allowing to express the normal map $\mathbf{n}^\Phi_{\infty}$ of $\Phi_{\infty}$ as an infinite product. Namely, with $(v_i,w_i)$ defined as in Section~\ref{subsec:corrugation-matrices}, we have
\[ \mathbf{w}^\Phi_{k,i-1} := \frac{\Phi_{k,i-1}(w_i)}{\|\Phi_{k,i-1}(w_i)\|},\qquad \mathbf{n}^\Phi_{k,i-1} :=\frac{\Phi_{k,i-1}(v_i)\wedge \Phi_{k,i-1}(w_i)}{\|\Phi_{k,i-1}(v_i)\wedge \Phi_{k,i-1}(w_i)\|}, 
  \]
and $\mathbf{t}^\Phi_{k,i-1}$ is chosen so that $\mathbf{F}^\Phi_{k,i-1}=(\mathbf{t}^\Phi_{k,i-1}, \mathbf{w}^\Phi_{k,i-1}, \mathbf{n}^\Phi_{k,i-1})$ is a direct orthonormal frame. We then write
\begin{equation}
\label{eq:F0_M_e3_Phi}
\mathbf{n}^\Phi_{\infty}=\mathbf{F}_0.\mathcal{M}^\Phi_{\infty}.\mathbf{e}_3 \quad\mbox{with}\quad \mathcal{M}^\Phi_{\infty}:=\prod_{k=1}^{\infty}\left(\prod_{i=1}^3 \mathcal{M}^\Phi_{k,i}\right).
\end{equation}
By considering the intermediary frame $\mathbf{F}^\Phi_{k,i-\frac{1}{2}}$ obtained by replacing $df_{k,i}$ by $\Phi_{k,i}$ in~\eqref{eq:intermediary_frame}, we get as above a splitting of the corrugation matrix in two parts 
\[\mathcal{M}^\Phi_{k,i}=\mathcal{L}^\Phi_{k,i}.\mathcal{R}^\Phi_{k,i},
\]
where
\[\mathbf{F}^\Phi_{k,i-\frac{1}{2}}=\mathbf{F}^\Phi_{k,i-1}\cdot \mathcal{L}^\Phi_{k,i}\quad\mbox{and}\quad\mathbf{F}^\Phi_{k,i}=\mathbf{F}^\Phi_{k,i-\frac{1}{2}}\cdot \mathcal{R}^\Phi_{k,i}.
\]
\begin{lemme}
\label{lem:alpha-beta}
With $\mu_{k,i}^{\Phi}$ defined in Lemma~\ref{lem:sequence-Phi_ki-well-defined} and with the above notations for $w_i$, we have
\[\mathcal{R}^\Phi_{k,i}=\left(\begin{array}{ccc}
\cos\beta^\Phi_{k,i} & -\sin\beta^\Phi_{k,i} & 0\\
\sin\beta^\Phi_{k,i} & \cos\beta^\Phi_{k,i} & 0\\
  0 & 0 & 1
\end{array}\right)\]
with
\[\cos\beta^\Phi_{k,i}=\frac{\mu_{k,i}^{\Phi}(w_i,w_{i+1})}{\sqrt{\mu_{k,i}^{\Phi}(w_i,w_i)\mu_{k,i}^{\Phi}(w_{i+1},w_{i+1})}},\]
and
\[\mathcal{L}^\Phi_{k,i}=\left(\begin{array}{ccc}
\cos \theta^\Phi_{k,i} & 0 & -\sin\theta^\Phi_{k,i}\\
   0 & 1 & 0\\
\sin\theta^\Phi_{k,i} & 0 & \cos\theta^\Phi_{k,i}\\
\end{array}\right)\]
with $\theta^\Phi_{k,i}=\alpha^\Phi_{k,i}\cos(2\pi N_{k,i}\varpi_i)$, where 
\[\alpha^\Phi_{k,i} = J_0^{-1}\left(\sqrt{\frac{Z_{k,i}}{H_i(g_k-g_{k-1})+Z_{k,i}}}\right)
\]
and 
\[Z_{k,i}= \frac{\mu_{k,i-1}^{\Phi}(w_{i-1},w_{i-1})}{\ell_i(w_{i-1})^2}\left( \frac{\mu_{k,i-1}^{\Phi}(w_{i-1},w_{i-1})\mu_{k,i-1}^{\Phi}(w_i,w_{i})}{\mu_{k,i-1}^{\Phi}(w_{i-1},w_i)^2}-1\right)
\]
In particular, $\beta^\Phi_{k,i}$ and $\alpha^\Phi_{k,i}$ do not depend on the corrugation sequence $N_*=(N_{k,i})_{k,i}$.
\end{lemme}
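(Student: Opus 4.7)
The proof is essentially a bookkeeping computation; the key conceptual input is that $\Phi_{k,i}$ is $\mu_{k,i}^{\Phi}$-isometric (Lemma~\ref{lem:sequence-Phi_ki-well-defined}), which converts the geometric quantities (norms, angles) into algebraic expressions in $\mu_{k,i}^{\Phi}$, $\ell_i$ and $w_i$. For $\mathcal{R}^\Phi_{k,i}$ I would first note that, by construction of the intermediary frame, $\mathbf{n}^\Phi_{k,i-\frac{1}{2}} := \mathbf{n}^\Phi_{k,i}$, so $\mathcal{R}^\Phi_{k,i}$ fixes the third basis vector and is a planar rotation. Its angle $\beta^\Phi_{k,i}$ is the angle between $\mathbf{w}^\Phi_{k,i-\frac{1}{2}} = \Phi_{k,i}(w_i)/\|\Phi_{k,i}(w_i)\|$ and $\mathbf{w}^\Phi_{k,i} = \Phi_{k,i}(w_{i+1})/\|\Phi_{k,i}(w_{i+1})\|$, and the $\mu_{k,i}^{\Phi}$-isometry property immediately turns the normalized inner product into the announced expression for $\cos\beta^\Phi_{k,i}$.

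For $\mathcal{L}^\Phi_{k,i}$, the decisive observation comes from~\eqref{eq:Phi-c}: since $\ell_i(w_i)=0$, one has $\Phi_{k,i}(w_i) = \Phi_{k,i-1}(w_i)$, hence $\mathbf{w}^\Phi_{k,i-\frac{1}{2}} = \mathbf{w}^\Phi_{k,i-1}$. Thus $\mathcal{L}^\Phi_{k,i}$ fixes the second basis vector and is a rotation about the $\mathbf{w}$ axis. The new tangent plane is spanned by $\Phi_{k,i-1}(w_i)$ and $\Phi_{k,i}(u_i) = \mathbf{z}_{k,i} = r(\cos\theta\,\mathbf{t}^\Phi_{k,i-1} + \sin\theta\,\mathbf{n}^\Phi_{k,i-1})$; taking the cross product with the unit vector $\mathbf{w}^\Phi_{k,i-1}$ and using $\mathbf{t}\wedge\mathbf{w} = \mathbf{n}$ and $\mathbf{n}\wedge\mathbf{w} = -\mathbf{t}$, I read off $\mathbf{n}^\Phi_{k,i} = -\sin\theta\,\mathbf{t}^\Phi_{k,i-1} + \cos\theta\,\mathbf{n}^\Phi_{k,i-1}$, which yields the claimed block form of $\mathcal{L}^\Phi_{k,i}$ together with $\theta^\Phi_{k,i} = \alpha^\Phi_{k,i}\cos(2\pi N_{k,i}\varpi_i)$.

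It remains to identify $\alpha^\Phi_{k,i}$ explicitly. Definition~\ref{def:FCP} gives $\alpha^\Phi_{k,i} = J_0^{-1}(\|\Phi_{k,i-1}(u_i)\|/r)$ with $r^2 = \eta_{k,i} + \|\Phi_{k,i-1}(u_i)\|^2$ and, by Lemma~\ref{lem:sequence-Phi_ki-well-defined}, $\eta_{k,i} = H_i(g_k - g_{k-1})$; setting $Z_{k,i} := \|\Phi_{k,i-1}(u_i)\|^2$ rewrites $\alpha^\Phi_{k,i}$ into the announced form. To compute $Z_{k,i}$, I use isometry so that $Z_{k,i} = \mu_{k,i-1}^{\Phi}(u_i, u_i)$, and that $u_i$ is uniquely determined by the two linear conditions $\ell_i(u_i) = 1$ and $\mu_{k,i-1}^{\Phi}(u_i, w_i) = 0$ (the latter because $\mathbf{t}^\Phi_{k,i-1}\perp\mathbf{w}^\Phi_{k,i-1}$). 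Expanding $u_i$ in the basis $(w_{i-1}, w_i)$ of $\R^2$---which is a basis because $\ell_{i-1}$ and $\ell_i$ are linearly independent---and solving the resulting $2\times 2$ system produces, after substitution into $\mu_{k,i-1}^{\Phi}(u_i, u_i)$, the stated formula. Independence from $N_*$ is then clear: the corrugation number $N_{k,i}$ enters only through the cosine factor in $\theta^\Phi_{k,i}$, whereas $\cos\beta^\Phi_{k,i}$, $Z_{k,i}$ and $\alpha^\Phi_{k,i}$ depend only on $g_k$, $g_{k-1}$ and the three forms $\ell_i$. The absence of a genuine obstacle reflects a crucial design feature: in contrast with the asymptotic formula~\eqref{eq:L_ki} for $df_{k,i}$, the pointwise nature of the formal corrugation process~\eqref{eq:Phi-c} makes the analogous expressions for $\mathcal{L}^\Phi_{k,i}$ and $\mathcal{R}^\Phi_{k,i}$ exact, with no $O(1/N_{k,i})$ remainder.
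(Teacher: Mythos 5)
Your overall strategy coincides with the paper's: use the $\mu_{k,i}^{\Phi}$-isometry of $\Phi_{k,i}$ to turn geometric quantities into algebraic expressions, observe from~\eqref{eq:Phi-c} that $\Phi_{k,i}(w_i)=\Phi_{k,i-1}(w_i)$ and $\Phi_{k,i}(u_i)=\mathbf{z}_{k,i}$, identify $\mathbf{n}^\Phi_{k,i}= \cos\theta\,\mathbf{n}^\Phi_{k,i-1}-\sin\theta\,\mathbf{t}^\Phi_{k,i-1}$, and then compute $Z_{k,i}=\mu_{k,i-1}^{\Phi}(u_i,u_i)$ by expanding $u_i$ in the basis $(w_{i-1},w_i)$. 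The block forms of $\mathcal{R}^\Phi_{k,i}$ and $\mathcal{L}^\Phi_{k,i}$, the formula for $\cos\beta^\Phi_{k,i}$, and the $N_*$-independence argument all match the paper.

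There is, however, a genuine gap in the last step. You impose the two conditions $\ell_i(u_i)=1$ and $\mu_{k,i-1}^{\Phi}(u_i,w_i)=0$ --- and the second one is indeed the correct one, since $\Phi_{k,i-1}(u_i)\parallel\mathbf{t}^\Phi_{k,i-1}\perp\mathbf{w}^\Phi_{k,i-1}\propto\Phi_{k,i-1}(w_i)$. But solving that $2\times2$ system gives $x=1/\ell_i(w_{i-1})$ and $y=-x\,\mu(w_{i-1},w_i)/\mu(w_i,w_i)$, hence
\[
Z_{k,i}=\mu(u_i,u_i)=\frac{1}{\ell_i(w_{i-1})^2}\left(\mu(w_{i-1},w_{i-1})-\frac{\mu(w_{i-1},w_i)^2}{\mu(w_i,w_i)}\right),
\]
which is \emph{not} the formula stated in the lemma (the stated expression has $\mu(w_{i-1},w_i)^2/\mu(w_{i-1},w_{i-1})$ in place of $\mu(w_i,w_i)$ in the denominator; a Euclidean test case with $\ell_i=dy$, $w_i=e_x$, $w_{i-1}=e_x-e_y$ gives $1$ versus $2$). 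The paper's own proof arrives at the stated formula via $y=-\mu(w_{i-1},w_{i-1})/(\mu(w_{i-1},w_i)\ell_i(w_{i-1}))$, which is what one gets from the \emph{wrong} orthogonality condition $\mu(u_i,w_{i-1})=0$, inconsistent with the definition of the corrugation frame. So your chain of reasoning does not in fact produce the stated formula, and the claim ``solving the resulting system produces the stated formula'' is an unexamined leap; carried out honestly, your (correct) conditions reveal that the printed expression for $Z_{k,i}$ needs to be corrected.
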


\begin{proof} 
  By definition of the rotation matrix $\mathcal{R}^\Phi_{k,i}$, its angle $\beta^\Phi_{k,i}$ is the angle between $\Phi_{k,i}(w_i)$ and  $\Phi_{k,i}(w_{i+1})$. Since $\Phi_{k,i}$ is $\mu_{k,i}^{\Phi}$-isometric, we compute
 \[\cos\beta^\Phi_{k,i}=\frac{\langle \Phi_{k,i}(w_i),\Phi_{k,i}(w_{i+1})\rangle}{\|\Phi_{k,i}(w_i)\|\|\Phi_{k,i}(w_{i+1})\|}=\frac{\mu_{k,i}^{\Phi}(w_i,w_{i+1})}{\sqrt{\mu_{k,i}^{\Phi}(w_i,w_i)\mu_{k,i}^{\Phi}(w_{i+1},w_{i+1})}}
\]
as claimed in the lemma. For convenience we denote by $u_{k,i}$ the unique vector field such that $\Phi_{k,i-1}(u_{k,i})$ is collinear with $\mathbf{t}^\Phi_{k,i-1}$ and $\ell_i(u_{k,i})=1$, see Definition~\ref{def:FCP}.
From~\eqref{eq:Phi-c},
we get $\Phi_{k,i}(u_{k,i})=\mathbf{z}^\Phi_{k,i}$, where $\mathbf{z}^\Phi_{k,i}= r^\Phi_i(\cos \theta^\Phi_{k,i}\, \mathbf{t}^\Phi_{k,i-1} +\sin \theta^\Phi_{k,i}\, \mathbf{n}^\Phi_{k,i-1})$. 
We also get from~\eqref{eq:Phi-c} that $\Phi_{k,i}(w_i)=\Phi_{k,i-1}(w_i)$, implying $\mathbf{w}^\Phi_{k,i}=\mathbf{w}^\Phi_{k,i-1}$.
Now, $(u_{k,i}, w_i)$ being a direct frame, we compute
\[\mathbf{n}^\Phi_{k,i} =\frac{\Phi_{k,i}(u_{k,i})\wedge \Phi_{k,i}(w_i)}{\|\Phi_{k,i}(u_{k,i})\wedge \Phi_{k,i}(w_i)\|} = \frac{\mathbf{z}^\Phi_{k,i} \wedge \mathbf{w}^\Phi_{k,i-1}}{r^\Phi_i} = \cos \theta^\Phi_{k,i} \mathbf{n}^\Phi_{k,i-1} - \sin \theta^\Phi_{k,i} \mathbf{t}^\Phi_{k,i-1}.
\]
It follows that the rotation matrix
$\mathcal{L}^\Phi_{k,i}$ has the form given in the lemma with 
$\theta^\Phi_{k,i}=\alpha^\Phi_{k,i}\cos(2\pi N_{k,i}\varpi_i)$.
By~\eqref{eq:def-Phi^c}, we have
\[\alpha^\Phi_{k,i} = J_0^{-1}\left(\frac{\|\Phi_{k,i-1}(u_{k,i})\|}{\sqrt{\eta_{k,i}+\|\Phi_{k,i-1}(u_{k,i})\|^2}  }\right)\]
From Lemma~\ref{lem:sequence-Phi_ki-well-defined} and since  $\Phi_{k,i-1}$ is $\mu_{k,i-1}^{\Phi}$-isometric we obtain
\[\alpha^\Phi_{k,i} = J_0^{-1}\left(\sqrt{\frac{\mu_{k,i-1}^{\Phi}(u_{k,i},u_{k,i})}{H_i(g_k-g_{k-1})+\mu_{k,i-1}^{\Phi}(u_{k,i},u_{k,i})}}\right).\]
We have $u_{k,i} = x w_{i-1} + y w_i$ for some real coefficients $x, y$. Applying $\ell_i$ on both sides  of the decomposition we get $x = 1/\ell_i(w_{i-1})$. 
Using the fact that 
$\Phi_{k,i-1}(u_{k,i})$ is perpendicular to $\mathbf{w}_{k,i-1}$ and that $\Phi_{k,i-1}$ is $\mu_{k,i-1}^{\Phi}$-isometric, we deduce
\[y= - \frac{\mu_{k,i-1}^{\Phi}(w_{i-1},w_{i-1})}{\mu_{k,i-1}^{\Phi}(w_{i-1},w_i)\ell_i(w_{i-1})}\]
We then have
\begin{align*}
    \mu_{k,i-1}^{\Phi}(u_{k,i},u_{k,i}) &=  y\mu_{k,i-1}^{\Phi}(u_{k,i},w_i)
                                        &= y\left(\frac{\mu_{k,i-1}^{\Phi}(w_{i-1},w_i)}{\ell_i(w_{i-1})} + y\mu_{k,i-1}^{\Phi}(w_{i},w_i)\right).
  \end{align*}
  Replacing $y$ by its above value, we get the expression for  
  $Z_{k,i}= \mu_{k,i-1}^{\Phi}(u_{k,i},u_{k,i})$ as in the lemma.
\end{proof}

\subsection{Regularity of the formal analogue}

The map $\mathcal{M}^\Phi_{\infty}:C^*\to SO(3)$ has a natural factorization as a product of two maps that we now describe.  
In Formula~\eqref{eq:def-Phi^c} defining the formal corrugation process $\Phi^c=FCP(\Phi,\eta,\varpi,N)$, the affine projection $\varpi$ appears in the expression of the angle $\theta=\alpha\cos(2\pi N\varpi)$ and in the definition of $\ell:=d\varpi$.
We can derive from $FCP$ a new process $\widetilde{FCP}$ by decoupling $\varpi$ and $\ell$. Precisely, the linear form $\ell$ replaces $\varpi$ as a parameter of the process and the projection $\varpi$ is replaced by a variable $t$. In particular, $\theta$ is now considered as a function of two variables $(p,t)\mapsto \theta(p,t)=\alpha(p) \cos (2\pi Nt)$. Similarly, the vector $\mathbf{z}$ in Definition~\ref{def:FCP} is now given by
\[
    \mathbf{z}(p,t) = r(p)\Big(\cos(\theta(p,t))\mathbf{t}(p) + \sin(\theta(p,t))\mathbf{n}(p) \Big) .
\]
Consequently, the maps  $\widetilde{\Phi^c}:C\times\R\to Mono(\R^2,\E^3)$ defined by this new process also have two variables
\[\widetilde{\Phi^c}(p,t):=\Phi(p)+ (\mathbf{z}(p,t) - \Phi(u(p)))\otimes \ell.\]
We denote by $\widetilde{FCP}(\Phi,\eta,\ell,N)$ the formal corrugated map $\widetilde{\Phi^c}$. Of course, if $d\varpi=\ell$ then
\[FCP(\Phi,\eta,\varpi,N)=\widetilde{FCP}(\Phi,\eta,\ell,N)\circ (Id,\varpi).\]
Starting with $\Phi_0$, the extended formal corrugation process produces a sequence of maps $(\widetilde{\Phi}_{k,i})_{k,i}$ such that $\Phi_{k,i}=\widetilde{\Phi}_{k,i}\circ (Id,\varpi_i)$
and a sequence of corrugation matrices such that $\mathcal{M}^\Phi_{k,i}=\mathcal{M}(\widetilde{\Phi}_{k,i})\circ (Id,\varpi_i)$ for all $(k,i).$ We thus have 
\[\mathcal{M}^\Phi_{\infty}=\prod_{k=1}^{\infty}\left(\prod_{i=1}^3\mathcal{M}(\widetilde{\Phi}_{k,i})\circ (Id,\varpi_i)\right).\]
This motivates the introduction of the following corrugation matrix
\[\begin{array}{lcll}
\widetilde{\mathcal{M}}^\Phi_{\infty} : & C^*\times\R^3 & \longrightarrow & SO(3)\\
 & (p,t_1,t_2,t_3) & \longmapsto & \prod_{k=1}^{\infty}\left(\prod_{i=1}^3 \mathcal{M}(\widetilde{\Phi}_{k,i})(p,t_i)\right)
  \end{array}\]
which is defined over $C^*\times\R^3$ since the formal corrugation process converges over $C^*$. We obviously have
\[\mathcal{M}^\Phi_{\infty}=\widetilde{\mathcal{M}}^\Phi_{\infty}\circ (Id,\boldsymbol{\varpi})\]
where $\boldsymbol{\varpi}:C^*\to \R^3$ is the affine map defined by
$\boldsymbol{\varpi}(p):=(\varpi_1(p),\varpi_2(p),\varpi_3(p)).$

\begin{defn}
We call $\widetilde{\mathcal{M}}^\Phi_{\infty}$ the \emph{decoupled} corrugation matrix of~$\Phi_{\infty}.$
\end{defn}

\noi
By ignoring the affine projections $\varpi_i$, the decoupled corrugation matrix
$\widetilde{\mathcal{M}}^\Phi_{\infty}$ makes apparent some possible symmetries of the limit map~$\Phi_{\infty}$. 

\begin{lemme}
\label{lem:H2-G-invariant}
Let $\Phi_{\infty}$ be the formal analogue of $df_{\infty}$. The  decoupled corrugation matrix $\widetilde{\mathcal{M}}^\Phi_{\infty}$ does not depend on the angular parameter $\varphi$.
\end{lemme}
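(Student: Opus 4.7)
The plan is to trace through the explicit formulas of Lemma~\ref{lem:alpha-beta} and argue that every factor appearing in the infinite product defining $\widetilde{\mathcal{M}}^\Phi_{\infty}$ is $\varphi$-independent once we decouple the $\varpi_i$ into the free variables $t_i$.

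First, I would observe that the initial metric $\pullback{f_0}$ and all metrics $g_k$ defined in~\eqref{eq:def-g_k} have coefficients depending only on $\rho$, because both $\delta_k^E$ and $\delta_k^G$ are polynomials in $\rho$. Consequently every difference $g_k-g_{k-1}$ has $\rho$-only coefficients as well. The linear forms $\ell_i=d\varpi_i$ all have constant coefficients, so $H_i(g_k-g_{k-1})$ is a scalar function of $\rho$ alone. By Lemma~\ref{lem:sequence-Phi_ki-well-defined}, the metric $\mu^\Phi_{k,i}=g_{k-1}+\sum_{j=1}^i H_j(g_k-g_{k-1})\,\ell_j\otimes\ell_j$ is then a $\rho$-dependent combination of $g_{k-1}$ and the constant tensors $\ell_j\otimes\ell_j$; in particular, evaluated on any pair of constant vectors $w_i,w_j\in\R^2$, the scalar $\mu^\Phi_{k,i}(w_i,w_j)$ is a function of $\rho$ alone.

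Next, I apply Lemma~\ref{lem:alpha-beta}. The closed-form expressions there express both $\cos\beta^\Phi_{k,i}$ and $\alpha^\Phi_{k,i}$ purely in terms of (i) the quantities $\mu^\Phi_{k,i-1}(w_j,w_l)$ for $j,l\in\{i-1,i,i+1\}$, (ii) $H_i(g_k-g_{k-1})$, and (iii) the constants $\ell_i(w_{i-1})$. By the first step, all of these depend on $\rho$ only, hence $\alpha^\Phi_{k,i}$ and $\beta^\Phi_{k,i}$ are $\rho$-only scalar fields. In the coupled process the $\varphi$-dependence of the corrugation matrix enters exclusively through $\theta^\Phi_{k,i}=\alpha^\Phi_{k,i}(\rho)\cos(2\pi N_{k,i}\varpi_i(\rho,\varphi))$; but in the decoupled process $\widetilde{FCP}$ the projection $\varpi_i$ is replaced by the free variable $t_i$, so the decoupled angle becomes $\widetilde{\theta}^\Phi_{k,i}(p,t_i)=\alpha^\Phi_{k,i}(\rho)\cos(2\pi N_{k,i}t_i)$, which is a function of $(\rho,t_i)$ only.

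Putting these observations together, each matrix factor $\mathcal{M}(\widetilde{\Phi}_{k,i})(p,t_i)=\mathcal{L}(\widetilde{\Phi}_{k,i})(p,t_i)\cdot\mathcal{R}(\widetilde{\Phi}_{k,i})(p)$ has, in its closed form from Lemma~\ref{lem:alpha-beta}, entries that depend only on $\rho$ and $t_i$. I would then conclude by noting that the infinite product
\[
    \widetilde{\mathcal{M}}^\Phi_{\infty}(p,t_1,t_2,t_3)=\prod_{k=1}^{\infty}\Bigl(\prod_{i=1}^3 \mathcal{M}(\widetilde{\Phi}_{k,i})(p,t_i)\Bigr)
\]
is therefore a function of $(\rho,t_1,t_2,t_3)$ alone, proving the lemma. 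The only step requiring real care is the inductive verification that the formal frames $\mathbf{F}^\Phi_{k,i-1}$ entering the definition of $\mathcal{M}^\Phi_{k,i}$ interact with the matrix factors only through the scalar invariants already identified in Lemma~\ref{lem:alpha-beta}; this is exactly the content of that lemma, since the matrices are expressed in the moving frame itself, so no absolute positional dependence on $\varphi$ can sneak back in.
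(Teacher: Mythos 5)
Your argument is correct and follows the same approach as the paper: use the rotational invariance of $f_0$ and the $\rho$-only dependence of $g_k$ and $\mu^\Phi_{k,i}$, invoke Lemma~\ref{lem:alpha-beta} to conclude that $\alpha^\Phi_{k,i}$ and $\beta^\Phi_{k,i}$ depend only on $\rho$, and observe that the decoupling replaces $\varpi_i$ by the free variable $t_i$, eliminating the only remaining source of $\varphi$-dependence. Your write-up is simply a slightly more explicit rendering of the paper's proof, which spends one sentence per observation where you spend a few.
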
 

\begin{proof} The chosen initial map $f_0$ is rotationaly invariant and its pull-back metric $g_0=\pullback{f_0}$ only depends on $\rho$, see \ref{subsec:initial-map}. The sequence of metrics $(g_k)_k$ also only depends on $\rho$, see~\ref{subsec:sequence_metrics_maps}. From the analytical expression~\eqref{eq:def_mu_ki_Phi}, the metrics $\mu_{k,i}^{\Phi}$ also depends only on $\rho.$ Obviously the angle $\widetilde{\beta^\Phi_{k,i}}(p,t_i)$ of the rotation matrix $\widetilde{\mathcal{L}^\Phi_{k,i}}(p,t_i)$ is equal to $\beta^\Phi_{k,i}(p)$ and the amplitude $\widetilde{\alpha^\Phi_{k,i}}(p,t_i)$ is equal to $\alpha^\Phi_{k,i}(p)$. By Lemma~\ref{lem:alpha-beta}, the two functions $\beta^\Phi_{k,i}$ and $\alpha^\Phi_{k,i}.$ can be expressed in terms of the metrics $\mu_{k,i}^{\Phi}$ and consequently, only depends on~$\rho.$ 
\end{proof}
\noi
{\bf Remark.} The matrix $\mathcal{M}^\Phi_{\infty}$ does depend on  $\varphi$ because of the presence of the projections $\varpi_i$ whose values depend on both $\rho$ and $\varphi$.  
\begin{coro}\label{cor:regularity-n}
  The H\"older regularity of $\mathbf{n}^\Phi_{\infty}$ at a point $(\rho,\varphi)$ only depends on~$\rho$. 
\end{coro}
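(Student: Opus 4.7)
The plan is to use Lemma~\ref{lem:H2-G-invariant} to peel off the smooth $\varphi$-dependence of $\mathbf{n}^\Phi_\infty$, then to exploit the fact that what remains is an infinite matrix product whose $\varphi$-dependence sits entirely in the phases of cosines, so its local Hölder behaviour becomes uniform in $\varphi$.

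First I would combine~\eqref{eq:F0_M_e3_Phi} with the identity $\mathcal{M}^\Phi_\infty = \widetilde{\mathcal{M}}^\Phi_\infty \circ (\mathrm{Id}, \boldsymbol{\varpi})$ and Lemma~\ref{lem:H2-G-invariant} to obtain
\[
    \mathbf{n}^\Phi_\infty(\rho, \varphi) = \mathbf{F}_0(\rho, \varphi) \cdot \widetilde{\mathcal{M}}^\Phi_\infty\!\bigl(\rho,\, -\rho,\, \rho - a\varphi,\, \rho + a\varphi\bigr) \cdot \mathbf{e}_3,
\]
in which the first spatial argument of $\widetilde{\mathcal{M}}^\Phi_\infty$ is genuinely only $\rho$. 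Fix $\rho$, $\varphi_0$ and $c \in \R$, and write $T_c(\rho, \varphi) := (\rho, \varphi + c)$. The rotational symmetry of $f_0$ visible from~\eqref{eq:f_0} gives $\mathbf{F}_0(\rho, \varphi + c) = R^z_c \cdot \mathbf{F}_0(\rho, \varphi)$, where $R^z_c$ is the rotation of $\E^3$ of angle $c$ around the $z$-axis. Since multiplication by the smooth matrix field $R^z_c \cdot \mathbf{F}_0(\rho, \cdot)$ preserves the local Hölder regularity at $(\rho, \varphi_0)$, the corollary reduces to showing that
\[
    \Xi^c_\rho(\varphi) := \widetilde{\mathcal{M}}^\Phi_\infty\!\bigl(\rho,\, -\rho,\, \rho - a(\varphi + c),\, \rho + a(\varphi + c)\bigr)
\]
has a local Hölder regularity at $\varphi_0$ that does not depend on $c$.

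For this I would invoke the explicit factorization developed in Section~\ref{sec:Gauss_map},
\[
    \widetilde{\mathcal{M}}^\Phi_\infty(\rho, t_1, t_2, t_3) = \prod_{k=1}^\infty \prod_{i=1}^3 \mathcal{L}^\Phi_{k,i}(\rho, t_i)\, \mathcal{R}^\Phi_{k,i}(\rho),
\]
where, by Lemma~\ref{lem:alpha-beta}, $\mathcal{R}^\Phi_{k,i}$ does not depend on $t_i$ and $\mathcal{L}^\Phi_{k,i}(\rho, t_i)$ depends on $t_i$ only through $\cos(2\pi N_{k,i} t_i)$, with an amplitude $\alpha^\Phi_{k,i}(\rho)$ independent of $t_i$. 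Shifting $t_2, t_3$ by the constants $\mp ac$ is thus a pure phase shift in these cosines, leaving the amplitudes $\alpha^\Phi_{k,i}(\rho)$ and frequencies $N_{k,i}$ intact. A telescoping estimate for the infinite product of $SO(3)$-valued factors (each of operator norm~$1$), combined with the $t_i$-uniform bound
\[
    \|\mathcal{L}^\Phi_{k,i}(\rho, t) - \mathcal{L}^\Phi_{k,i}(\rho, t')\| \leq \min\bigl(C\, \alpha^\Phi_{k,i}(\rho)\, N_{k,i}\, |t - t'|,\; 2\alpha^\Phi_{k,i}(\rho)\bigr),
\]
then yields
\[
    \|\Xi^c_\rho(\varphi_0 + h) - \Xi^c_\rho(\varphi_0)\| \leq \sum_{k,i} \min\bigl(C\, \alpha^\Phi_{k,i}(\rho)\, N_{k,i}\, a\, |h|,\; 2\alpha^\Phi_{k,i}(\rho)\bigr) =: \omega_\rho(|h|),
\]
a modulus of continuity that depends only on $\rho$ and $|h|$, hence on neither $c$ nor $\varphi_0$.

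The hard part will be making the telescoping estimate rigorous: convergence of $\sum_{k,i} \alpha^\Phi_{k,i}(\rho)$, inherited from hypothesis~\ref{it:P4} on $(g_k)_k$, forces the tail of the product towards the identity in operator norm so that the factor-wise bound genuinely sums to a finite modulus; then the standard interpolation trick of splitting the sum at the index where $N_{k,i}|h|$ crosses $1$ turns $\omega_\rho$ into a Hölder estimate whose exponent is a function of the growth rates of the pairs $(N_{k,i}, \alpha^\Phi_{k,i}(\rho))_{k,i}$ only, hence of $\rho$ alone, proving the corollary.
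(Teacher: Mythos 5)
Your proof follows the paper's route --- Lemma~\ref{lem:H2-G-invariant}, the factorization $\mathcal{M}^\Phi_{\infty}=\widetilde{\mathcal{M}}^\Phi_{\infty}\circ(Id,\boldsymbol{\varpi})$, and the cosine structure of the factors $\mathcal{L}^\Phi_{k,i}$ from Lemma~\ref{lem:alpha-beta} --- but supplies in full the telescoping and phase-shift estimates that the paper's one-sentence argument leaves implicit. This extra work is in fact needed: precomposition with the affine map only transfers the regularity question to $\widetilde{\mathcal{M}}^\Phi_{\infty}$ at the image point $(\rho,-\rho,\rho-a\varphi,\rho+a\varphi)$, and your $c$-uniform modulus bound is what actually shows this regularity does not move along the $t_i$-fibers, hence is $\varphi$-independent.
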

\begin{proof}
  Since $\mathcal{M}^\Phi_{\infty}$ and $\widetilde{\mathcal{M}}^\Phi_{\infty}$ differ by an affine map, they share the same regularity. 
\end{proof}
\noi

\noi
The following proposition enlights the link between $\mathbf{n}^\Phi_{\infty}$ and the Weierstrass-like function defined by
\[(\rho,\varphi)\longmapsto \sum_{k=1}^{\infty}\left(\sum_{i=1}^{3}\alpha^{\Phi}_{k,i}(\rho)\cos(2\pi N_{k,i}\varpi_i(\rho,\varphi))\right).\] 
\begin{prop}
  \label{prop:formal-Weierstrass}
  Let $p_1=(\rho,\varphi_1)$ and $p_2=(\rho,\varphi_2)$ then
\[\|\mathbf{n}^\Phi_{\infty}(p_2)-\mathbf{n}^\Phi_{\infty}(p_1)\|\leq \sqrt{2}\sum_{k=1,i\in\{1,2,3\}}^{\infty}\alpha_{k,i}^{\Phi}(\rho)\left|\Delta\cos(2\pi N_{k,i}\varpi_i)\right|+\|\Delta\mathbf{F}_0\|_F.\]
where $\Delta X$ denotes the difference $X(p_2)-X(p_1).$
\end{prop}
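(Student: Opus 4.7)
The plan is to exploit the factorization $\mathbf{n}^{\Phi}_{\infty}=\mathbf{F}_{0}\cdot\mathcal{M}^{\Phi}_{\infty}\cdot\mathbf{e}_{3}$ given by~\eqref{eq:F0_M_e3_Phi} and to trace the dependence of each factor on the angular variable $\varphi$. First, I would write the difference of the normals at $p_{1}$ and $p_{2}$ as
\[
\mathbf{n}^{\Phi}_{\infty}(p_{2})-\mathbf{n}^{\Phi}_{\infty}(p_{1})
=(\Delta\mathbf{F}_{0})\,\mathcal{M}^{\Phi}_{\infty}(p_{2})\,\mathbf{e}_{3}
+\mathbf{F}_{0}(p_{1})\,(\Delta\mathcal{M}^{\Phi}_{\infty})\,\mathbf{e}_{3}.
\]
Since $\mathcal{M}^{\Phi}_{\infty}(p_{2})\mathbf{e}_{3}$ is a unit vector and $\mathbf{F}_{0}(p_{1})\in SO(3)$ preserves norms, the first term is bounded by $\|\Delta\mathbf{F}_{0}\|_{F}$, which accounts for the last term of the desired inequality. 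It therefore remains to control $\|(\Delta\mathcal{M}^{\Phi}_{\infty})\mathbf{e}_{3}\|\leq \|\Delta\mathcal{M}^{\Phi}_{\infty}\|_{F}$.

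Second, I would expand $\mathcal{M}^{\Phi}_{\infty}$ as the infinite product $\prod_{k,i}\mathcal{M}^{\Phi}_{k,i}$ and use a telescoping sum: writing the difference of two products of orthogonal matrices as
\[
\prod_{k,i}A^{(2)}_{k,i}-\prod_{k,i}A^{(1)}_{k,i}
=\sum_{k,i}\Bigl(\prod_{<(k,i)}A^{(2)}\Bigr)\bigl(A^{(2)}_{k,i}-A^{(1)}_{k,i}\bigr)\Bigl(\prod_{>(k,i)}A^{(1)}\Bigr),
\]
and invoking the submultiplicativity $\|ABC\|_{F}\leq\|A\|_{\mathrm{op}}\|B\|_{F}\|C\|_{\mathrm{op}}$ together with $\|\cdot\|_{\mathrm{op}}=1$ on $SO(3)$, I obtain
\[
\|\Delta\mathcal{M}^{\Phi}_{\infty}\|_{F}\leq\sum_{k,i}\|\mathcal{M}^{\Phi}_{k,i}(p_{2})-\mathcal{M}^{\Phi}_{k,i}(p_{1})\|_{F}.
\]

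Third, I would use the splitting $\mathcal{M}^{\Phi}_{k,i}=\mathcal{L}^{\Phi}_{k,i}\mathcal{R}^{\Phi}_{k,i}$ and the key observation, coming from Lemma~\ref{lem:alpha-beta}, that both $\mathcal{R}^{\Phi}_{k,i}$ and the amplitudes $\alpha^{\Phi}_{k,i}$ depend only on~$\rho$. Since $p_{1},p_{2}$ share the same $\rho$, the factor $\mathcal{R}^{\Phi}_{k,i}$ cancels out (it is orthogonal, so it can be pulled out without changing the Frobenius norm), and the difference reduces to $\|\mathcal{L}^{\Phi}_{k,i}(p_{2})-\mathcal{L}^{\Phi}_{k,i}(p_{1})\|_{F}$, where only the angle $\theta^{\Phi}_{k,i}=\alpha^{\Phi}_{k,i}(\rho)\cos(2\pi N_{k,i}\varpi_{i})$ changes with $\varphi$.

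Finally, a direct computation with the explicit block form of $\mathcal{L}^{\Phi}_{k,i}$ recalled in Lemma~\ref{lem:alpha-beta} gives
\[
\|\mathcal{L}^{\Phi}_{k,i}(p_{2})-\mathcal{L}^{\Phi}_{k,i}(p_{1})\|_{F}
=2\sqrt{2}\,\bigl|\sin\tfrac{\theta^{\Phi}_{k,i}(p_{2})-\theta^{\Phi}_{k,i}(p_{1})}{2}\bigr|
\leq\sqrt{2}\,|\theta^{\Phi}_{k,i}(p_{2})-\theta^{\Phi}_{k,i}(p_{1})|,
\]
and since $\alpha^{\Phi}_{k,i}$ is constant along the circle $\{\rho=\mathrm{cst}\}$, the angle difference equals $\alpha^{\Phi}_{k,i}(\rho)\,\Delta\cos(2\pi N_{k,i}\varpi_{i})$. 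Summing over $(k,i)$ and adding the $\|\Delta\mathbf{F}_{0}\|_{F}$ contribution yields the claimed bound. I do not expect a serious obstacle here: the statement is essentially a consequence of the rotational invariance provided by Lemma~\ref{lem:H2-G-invariant} (so that only the $\mathcal{L}$-factors vary with $\varphi$), combined with the elementary Lipschitz estimate on 3D rotations. The only mildly delicate point is the handling of the infinite product; but convergence of $\mathcal{M}^{\Phi}_{\infty}$ on $C^{*}$ has already been established, so the telescoping identity above extends to the infinite case by passing to the limit, and the inequality is vacuous if the right-hand side diverges.
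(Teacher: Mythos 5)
Your proposal is correct and follows essentially the paper's argument: the key steps --- factoring each $\mathcal{M}^{\Phi}_{k,i}=\mathcal{L}^{\Phi}_{k,i}\mathcal{R}^{\Phi}_{k,i}$, noting $\Delta\mathcal{R}^{\Phi}_{k,i}=0$ since $\beta^{\Phi}_{k,i}$ depends only on $\rho$, computing $\|\Delta\mathcal{L}^{\Phi}_{k,i}\|_F=2\sqrt{2}\,|\sin\tfrac{\Delta\theta^{\Phi}_{k,i}}{2}|$, and bounding by $\sqrt{2}|\Delta\theta^{\Phi}_{k,i}|=\sqrt{2}\,\alpha^{\Phi}_{k,i}(\rho)|\Delta\cos(2\pi N_{k,i}\varpi_i)|$ --- are exactly those in the paper. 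The only cosmetic difference is that the paper telescopes the frame recursion $\mathbf{F}^{\Phi}_{k,i}=\mathbf{F}^{\Phi}_{k,i-1}\mathcal{M}^{\Phi}_{k,i}$ to obtain $\|\Delta\mathbf{F}^{\Phi}_{k,i}\|_F\leq\|\Delta\mathbf{F}^{\Phi}_{k,i-1}\|_F+\sqrt{2}|\Delta\theta^{\Phi}_{k,i}|$, whereas you telescope the infinite matrix product $\mathcal{M}^{\Phi}_{\infty}$ directly and treat the $\Delta\mathbf{F}_0$ contribution separately; both organizations give the same estimate.
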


\noi
In this proposition we have used the Frobenius norm:
\[ \|\mathbf{F}\|_F  =  \di \sqrt{\|\mathbf{F}.\mathbf{e}_1\|^2+\| \mathbf{F}.\mathbf{e}_2\|^2+\| \mathbf{F}.\mathbf{e}_3\|^2}.\]
Note that this norm is invariant under the action of the orthogonal group.

\begin{proof}
 We write the difference $\Delta \mathbf{F}^\Phi_{k,i}$ as
\[\begin{array}{lll}
 \di \Delta \mathbf{F}^\Phi_{k,i} & = & \di \mathbf{F}^\Phi_{k,i-1}(p_2).\mathcal{M}^\Phi_{k,i}(p_2)-\mathbf{F}^\Phi_{k,i-1}(p_1).\mathcal{M}^\Phi_{k,i}(p_1)\vspace*{1mm}\\
   & = & \di \mathbf{F}^\Phi_{k,i-1}(p_2).\Delta \mathcal{M}^\Phi_{k,i}+ \Delta\mathbf{F}^\Phi_{k,i-1}.\mathcal{M}^\Phi_{k,i}(p_1).\vspace*{1mm}\\
  \end{array}\]
Since $\mathcal{M}^\Phi_{k,i}$ is an orthogonal matrix we deduce
\[\| \Delta\mathbf{F}^\Phi_{k,i-1}.\mathcal{M}^\Phi_{k,i}(p_1)\|_F= \| \Delta\mathbf{F}^\Phi_{k,i-1}\|_F.\]
From the fact that $\mathcal{M}^\Phi_{k,i}=\mathcal{L}^\Phi_{k,i}\mathcal{R}^\Phi_{k,i}$ we deduce
\[\Delta \mathcal{M}^\Phi_{k,i}=\left(\Delta\mathcal{L}^\Phi_{k,i}\right)\mathcal{R}^\Phi_{k,i}+\mathcal{L}^\Phi_{k,i}\left(\Delta\mathcal{R}^\Phi_{k,i}\right).\]
Since $\beta^\Phi_{k,i}$ does not depend on $\varphi$ we have $\Delta\mathcal{R}^\Phi_{k,i}=0.$ Computing the difference $\Delta\mathcal{L}^\Phi_{k,i}$ and taking the Frobenius norm we obtain 
\[\begin{array}{ccl}
\di\|\Delta\mathcal{L}^\Phi_{k,i}\|_F^2 & = &   \di 8\sin^2\left(\frac{\theta^\Phi_{k,i}(p_2)-\theta^\Phi_{k,i}(p_1)}{2}\right).
  \end{array}\]
Thus
\[\|\Delta \mathcal{L}^\Phi_{k,i}\|_F\leq\sqrt{2}\left|\theta^\Phi_{k,i}(p_2)-\theta^\Phi_{k,i}(p_1)\right|\]
and
\[\|\Delta \mathbf{F}^\Phi_{k,i}\|_F\leq \|\Delta \mathbf{F}^\Phi_{k,i-1}\|_F+\sqrt{2}\left|\theta^\Phi_{k,i}(p_2)-\theta^\Phi_{k,i}(p_1)\right|.\]
The proposition then easily follows.
\end{proof}

\subsection{Periodicity of the formal analogue}

\begin{lemme}\label{lem:periodicity_formal_pattern} 
 Let $\rho \in ]0,1[$, the map
\[\varphi\longmapsto\boldsymbol{\nu}^{\Phi}_{\infty}[N_*]\left(\rho,\varphi\right) \]
is $\frac{2\pi}{7\Omega[N_*]}$-periodic.
\end{lemme}
\begin{proof}
  From~\eqref{eq:F0_M_e3} and ~\eqref{eq:F0_M_e3_Phi}   the pattern map  $\boldsymbol{\nu}^{\Phi}_{\infty}$ of the formal analogue $\Phi_{\infty}$ satisfies
\[ \boldsymbol{\nu}^{\Phi}_{\infty}=\mathcal{M}^\Phi_{\infty}\cdot\mathbf{e}_3.\]
The proof of Lemma~\ref{lem:H2-G-invariant} shows that $\alpha^\Phi_{k,i}$ is not only independent of $\varphi$ but also of the sequence $N_*$ of corrugation numbers. We thus write $\alpha^\Phi_{k,i}(\rho)$ instead of $\alpha^\Phi_{k,i}(N^*)(\rho,\varphi)$. 
From the definition of the wavefront forms~\eqref{eq:def-varpi} we have for every $\varphi_0\in\R$ and $i\in\{1,2,3\}$
\begin{align}
  \varpi_i(\rho,\varphi+\varphi_0)=\varpi_i(\rho,\varphi)+\zeta a\varphi_0 \label{eq:varpi-period}
\end{align}
where $a=\frac{7}{2\pi}$ and $\zeta=0,-1$ or $1$ depending on whether $i$ is 1, 2 or 3. It follows that
\[N_{k,i}\varpi_i\left(\rho,\varphi+\frac{2\pi}{7\Omega[N_*]}\right)=N_{k,i}\varpi_i(\rho,\varphi)+\zeta\frac{N_{k,i}}{\Omega[N_*]}=N_{k,i}\varpi_i(\rho,\varphi)\quad \mod 1\]
since by definition of $\Omega[N_*]$ the quotient $\frac{N_{k,i}}{\Omega[N_*]}$ is an integer if $i=2,3$ and since $\zeta=0$ if $i=1$. From the fact that 
\[\theta^\Phi_{k,i}[N_*]\left(\rho,\varphi\right)=\alpha^\Phi_{k,i}(\rho)\cos\left(2\pi N_{k,i}\varpi_i\left(\rho,\varphi\right)\right)\]
we deduce that for all $(k,i)$
\[\theta^\Phi_{k,i}[N_*]\left(\rho,\varphi+\frac{2\pi}{7\Omega[N_*]}\right)=\theta^\Phi_{k,i}[N_*]\left(\rho,\varphi\right).\]
Since $\beta^\Phi_{k,i}$ is independent of $\varphi$ and of the sequence $N_*$, it easily follows that
\begin{equation}
\label{eq:M_n_phi}
\mathcal{M}_{\infty}^{\Phi}[N_*]\left(\rho,\varphi+\frac{2\pi}{7\Omega[N_*]}\right)=\mathcal{M}_{\infty}^{\Phi}[N_*]\left(\rho,\varphi\right).
\end{equation}
Hence, the $\frac{2\pi}{7\Omega[N_*]}$-periodicity of $\boldsymbol{\nu}^{\Phi}_{\infty}.$
\end{proof}

\subsection{Proof of  Proposition~\ref{prop:fractal-normal} and Corollary~\ref{coro:fractal-normal2}}
\label{sec:self-similarity-prop-and-coro}
Recall that $\Gamma_\ell^j=\{(\rho,\varphi)\,|\, \frac{2j\pi}{7\Omega_\ell}\leq\varphi\leq\frac{2(j+1)\pi}{7\Omega_\ell}\}$ and that $\mathbf{F}^{\Phi}_{\ell}:=\mathbf{F}^{\Phi}_{\ell,3}= (\mathbf{t}^{\Phi}_{\ell},\mathbf{w}^\Phi_{\ell},\mathbf{n}^{\Phi}_{\ell})$ is obtained by considering $\Phi_{\ell,3}$. Define the formal pattern map $\boldsymbol{\nu}^{\Phi}_{\ell,\infty}$ at scale $\ell$ by the relation
\[
\mathbf{n}^\Phi_{\infty}(\rho,\varphi) = \mathbf{F}_{\ell-1}^{\Phi}(\rho,\varphi)\cdot\boldsymbol{\nu}^{\Phi}_{\ell,\infty}(\rho,\varphi).
\]
Hence, for any $(\rho,\varphi)\in \Gamma_\ell^j$ we have
\[
|\mathbf{n}^\Phi_{\infty}(\rho,\varphi)  - \mathbf{F}_{\ell-1}^{\Phi}(\rho,\frac{2j\pi}{7\Omega_\ell})\cdot\boldsymbol{\nu}^{\Phi}_{\ell,\infty}(\rho,\varphi)|
\leq \mathrm{Lip}_{\rho}(\mathbf{F}_{\ell-1}^{\Phi}) \frac{2\pi}{7\Omega_\ell}
\]
and, denoting by  $\mathrm{Rot}_\ell^j$ the rotation mapping $(\mathbf{e}_1,\mathbf{e}_2,\mathbf{e}_3)$ to $\mathbf{F}_{\ell-1}^{\Phi}(\rho,\frac{2j\pi}{7\Omega_{\ell}})$,
\[\d_{Hauss}\left( 
\mathbf{n}^{\Phi}_{\infty}(\Gamma^j_\ell)\ ,\mathrm{Rot}_\ell^j \circ \boldsymbol{\nu}^{\Phi}_{\ell, \infty}(\Gamma_\ell^j)
\right) 
\leq \frac{2\pi \mathrm{Lip}_\rho(\mathbf{F}_{\ell-1}^{\Phi})}{7\Omega_\ell}.\] 
Moreover, the map $\boldsymbol{\nu}^{\Phi}_{\ell,\infty}$ is $\frac{2\pi}{7\Omega_\ell}$ periodic as easily deduced from the proof of Lemma~\ref{lem:invariance_absolute_normal}. In the above inequality, we can thus replace $\boldsymbol{\nu}^{\Phi}_{\ell, \infty}(\Gamma_\ell^j)$ by $\boldsymbol{\nu}^{\Phi}_{\ell, \infty}(\Gamma_\ell^0)$ and obtain the following inequality. 
\begin{equation*}
  \d_{Hauss}\left( 
(\mathrm{Rot}_{\ell}^j)^{-1} \circ \mathbf{n}^{\Phi}_{\infty}(  \Gamma^j_{\ell}) ,\boldsymbol{\nu}^{\Phi}_{\ell, \infty}(\Gamma_{\ell}^0)
\right) 
\leq \frac{2\pi \mathrm{Lip}_\rho(\mathbf{F}_{\ell-1}^{\Phi})}{7\Omega_{\ell}}.  
\end{equation*}

Combining this equation for any $j$ with the one with $j=0$, we have by the triangle inequality
\begin{equation*}
  \d_{Hauss}\left( 
(\mathrm{Rot}_{\ell}^j)^{-1} \circ \mathbf{n}^{\Phi}_{\infty}(  \Gamma^j_{\ell}) ,
(\mathrm{Rot}_{\ell}^0)^{-1} \circ \mathbf{n}^{\Phi}_{\infty}(  \Gamma^0_{\ell}) 
\right) 
\leq \frac{4\pi \mathrm{Lip}_\rho(\mathbf{F}_{\ell-1}^{\Phi})}{7\Omega_{\ell}}, 
\end{equation*}
from which we deduce
\begin{equation}
\label{eq:dist_hausdorff-bis}
  \d_{Hauss}\left( 
\mathbf{n}^{\Phi}_{\infty}(  \Gamma^j_{\ell}) ,
\mathrm{Rot}_{\ell}^j \circ (\mathrm{Rot}_{\ell}^0)^{-1} \circ \mathbf{n}^{\Phi}_{\infty}(  \Gamma^0_{\ell}) 
\right) 
\leq \frac{4\pi \mathrm{Lip}_\rho(\mathbf{F}_{\ell-1}^{\Phi})}{7\Omega_{\ell}}, 
\end{equation}
which implies Inequality~\eqref{eq:dist_hausdorff}.
Decomposing $\Gamma$ as the union $\cup_{j=0}^{7\Omega_\ell-1}\Gamma_\ell^j$ we obtain  
\[\d_{Hauss}\left( 
\mathbf{n}^{\Phi}_{\infty}(\Gamma)\ ,\bigcup_{j=0}^{7\Omega_\ell-1}\mathrm{Rot}_\ell^j \circ (\mathrm{Rot}_{\ell}^0)^{-1} \circ \mathbf{n}^{\Phi}_{\infty}(\Gamma_\ell^0)
\right) 
\leq \frac{4\pi \mathrm{Lip}_\rho(\mathbf{F}_{\ell-1}^{\Phi})}{7\Omega_\ell}\]
which concludes the proof of Proposition~\ref{prop:fractal-normal}. 
Rewriting~\eqref{eq:dist_hausdorff-bis} at scale $\ell+1$, we have for any $j'$:
\begin{equation*}
  \d_{Hauss}\left( 
\mathbf{n}^{\Phi}_{\infty}(  \Gamma^{j'}_{\ell+1}) ,
\mathrm{Rot}_{\ell+1}^{j'} \circ (\mathrm{Rot}_{\ell+1}^0)^{-1} \circ \mathbf{n}^{\Phi}_{\infty}(  \Gamma^0_{\ell+1}) 
\right) 
\leq \frac{4\pi \mathrm{Lip}_\rho(\mathbf{F}_{\ell}^{\Phi})}{7\Omega_{\ell+1}}, 
\end{equation*}
Corollary~\ref{coro:fractal-normal2} is then obtained by applying this inequality to each term of the decomposition of $\Gamma_{\ell}^j$ into 
$$\Gamma_{\ell}^j=\bigcup_{j'=j\frac{\Omega_{\ell+1}}{\Omega_\ell}}^{(j+1)\frac{\Omega_{\ell+1}}{\Omega_\ell}-1}\Gamma_{\ell+1}^{j'}.$$
 
\subsection{Self-similarity at any scale}\label{sec:similarity-scale}
Proposition~\ref{prop:fractal-normal} and Corollary~\ref{coro:fractal-normal2} still hold if we replace $\infty$ by $k > \ell$ which is useful in practice since the numerical experiments are done for a fixed number of iterations, namely for $k<\infty$. More precisely, if we define 
\[
\boldsymbol{\nu}_{\ell,k}^{\Phi} := \prod_{q=\ell}^k \left(\prod_{i=1}^3  
\mathcal{M}_{q,i}^\Phi
\right)\cdot \mathbf{e}_3
\]
we have
\[
\mathbf{n}^\Phi_{k}(\rho,\varphi) = \mathbf{F}_{\ell-1}^{\Phi}(\rho,\varphi)\cdot\boldsymbol{\nu}^{\Phi}_{\ell,k}(\rho,\varphi).
\]
Still following the proof of Lemma~\ref{lem:periodicity_formal_pattern}, we deduce that the map $\boldsymbol{\nu}^{\Phi}_{\ell,k}$ is $\frac{2\pi}{7\Omega_\ell}$-periodic. This allows to conclude that 
\[\d_{Hauss}\left( 
\mathbf{n}^{\Phi}_{k}(\Gamma^j_\ell)\ ,\mathrm{Rot}_{\ell}^j \circ \boldsymbol{\nu}^{\Phi}_{\ell,k}(\Gamma_\ell^0)
\right) 
\leq \frac{2\pi \mathrm{Lip}_\rho(\mathbf{F}_{\ell-1}^{\Phi})}{7\Omega_\ell}.\] 
Hence all the bounds of Proposition~\ref{prop:fractal-normal}
and Corollary~\ref{coro:fractal-normal2} hold when $\mathbf{n}^{\Phi}_{\infty}$ and $\boldsymbol{\nu}^{\Phi}_{\ell,\infty}$ are replaced by $\mathbf{n}^{\Phi}_{k}$ and  $\boldsymbol{\nu}^{\Phi}_{\ell,k}$.
These results also hold at an even smaller scale by considering the following pattern maps
\[\boldsymbol{\nu}_{\ell,\sigma,k,i}^{\Phi} := \left(\prod_{(q,s)=(\ell,\sigma)}^{(k,i)}\mathcal{M}_{q,s}^\Phi\right)\cdot \mathbf{e}_3.\]
In the above formula, $(\ell,\sigma)$, $(k,i)$ and $(q,s)$ are elements of $\N^*\times\{1,2,3\}$ ordered lexicographically:
\[\cdots (q-1,3)<(q,1)<(q,2)<(q,3)<(q+1,1)<\cdots.\]
Observe that $\boldsymbol{\nu}_{\ell,k}^{\Phi}=\boldsymbol{\nu}_{\ell,3,k,3}^{\Phi}$. We have
\[
\mathbf{n}^\Phi_{k,i}(\rho,\varphi) = \mathbf{F}_{\ell,\sigma-1}^{\Phi}(\rho,\varphi)\cdot\boldsymbol{\nu}^{\Phi}_{\ell,\sigma,k,i}(\rho,\varphi)
\]
with the convention $(q-1,3)=(q,0)$ and $(q-1,4)=(q,1).$
Let
$\Omega_{\ell,\sigma}=\Omega_{\ell,\sigma}[N_*]$ be the greatest common divisor of the sequence $(N_{q,s})_{q,s}$ where $s=2$ or $3$ and $(q,s)\geq(\ell,\sigma).$
Note that $\Omega_{\ell,3}=\Omega_\ell$ and  $\Omega_{\ell,1}=\Omega_{\ell,2}$.
Proposition~\ref{prop:similarity-any-scale} below shows that the image by $\mathbf{n}_{k,i}^{\Phi}$ of the arc
\[\Gamma_{\ell,\sigma}^j:=\{(\rho,\varphi)\,|\, \frac{2j\pi}{7\Omega_{\ell,\sigma}}\leq\varphi\leq\frac{2(j+1)\pi}{7\Omega_{\ell,\sigma}}\},\]
with $j\in\{0,\cdots,7\Omega_{\ell,\sigma}-1\}$, approximately decomposes into $\Omega_{\ell,\sigma+1}/\Omega_{\ell,\sigma}$ rotated copies of $\mathbf{n}^{\Phi}_{k,i}(\Gamma_{\ell,\sigma+1}^0)$.
 \begin{prop}\label{prop:similarity-any-scale} Let $\rho \in ]0,1[$. Let $(\ell,\sigma)$ and $(k,i)$  be two elements of $\N^*\times\{1,2,3\}$ such  that $(\ell,\sigma)<(k,i)$. For every $j\in \{0, \cdots,7\Omega_{\ell,\sigma}-1\}$, we have 
 \[
\d_{Hauss}\left( 
\mathbf{n}^{\Phi}_{k,i}(\Gamma^j_{\ell,\sigma})\ ,\bigcup_{j'=j\frac{\Omega_{\ell,\sigma+1}}{\Omega_{\ell,\sigma}}}^{(j+1)\frac{\Omega_{\ell,\sigma+1}}{\Omega_{\ell,\sigma}}-1}
\mathrm{rot}_{\ell,\sigma+1}^{j'}
\circ\mathbf{n}^{\Phi}_{k,i}(\Gamma_{\ell,\sigma+1}^0)
\right) 
\leq \frac{2\pi \mathrm{Lip}_\rho(\mathbf{F}_{\ell,\sigma}^{\Phi})}{7\Omega_{\ell,\sigma+1}}.
\]
where $\mathrm{rot}_{\ell,\sigma+1}^{j}$  denotes the rotation mapping $\mathbf{F}_{\ell,\sigma}^{\Phi}(\rho,0)$ to $\mathbf{F}_{\ell,\sigma}^{\Phi}(\rho,\frac{2j\pi}{7\Omega_{\ell,\sigma+1}})$.
\end{prop}
\noi
The proof of this proposition is an easy adaptation of the proofs of Proposition~\ref{prop:fractal-normal} and Corollary~\ref{coro:fractal-normal2} and is left to the reader. 

\subsection{Proof of Theorem~\ref{thm:image_absolute_normal_circle}}
\label{sec:normal-pattern}

 \begin{lemme}
\label{lem:invariance_absolute_normal} 
Let $\rho=\frac{p}{q}\in\Q\,\cap\, ]0,1[$, we have for all $n\geq q$ and all $\varphi\in\R/(2\pi\Z)$
\begin{equation}
\label{eq:invariance_absolute_normal} \boldsymbol{\nu}^{\Phi}_{\infty}[(1+n!)N_*]\left(\rho,\varphi\right)=\boldsymbol{\nu}^{\Phi}_{\infty}[N_*]\left(\rho,(1+n!)\varphi\right) 
\end{equation}
where $(1+n!)N_*$ is the sequence $((1+n!)N_{k,i})_{k,i}.$ 
\end{lemme}

\begin{proof}
Since $\rho=\frac{p}{q}\in\Q$, for all $n\geq q$ we have $n!\rho\in\N$ and consequently $n!N_{k,i}\rho\in\N$ for any $(k,i)$. It follows that
\[(1+n!)N_{k,i}\rho=N_{k,i}\rho\mod 1\]
and 
\[\begin{array}{lll}
\di \theta^\Phi_{k,i}[(1+n!)N_*]\left(\rho,\varphi\right) & = & \alpha^\Phi_{k,i}(\rho)\cos\left(2\pi (1+n!)N_{k,i}\varpi_i(\rho,\varphi)\right)\vspace*{1mm}\\
& = & \di  \alpha^\Phi_{k,i}(\rho)\cos\left(2\pi (1+n!)N_{k,i}(\rho+\zeta a\varphi)\right)\vspace*{1mm}\\
& = & \di  \alpha^\Phi_{k,i}(\rho)\cos\left(2\pi N_{k,i}\rho+2\pi N_{k,i}\zeta a(1+n!)\varphi\right)\vspace*{1mm}\\
& = & \di \alpha^\Phi_{k,i}(\rho)\cos\left(2\pi N_{k,i}\varpi_i(\rho,(1+n!)\varphi)\right)\vspace*{1mm}\\
& = & \di \theta^\Phi_{k,i}[N_*]\left(\rho,(1+n!)\varphi\right).
  \end{array}.\]
With  arguments analogous to the proof of Lemma~\ref{lem:periodicity_formal_pattern} we deduce
\[\forall n\geq q,\qquad 
\boldsymbol{\nu}^{\Phi}_{\infty}[(1+n!)N_*](\Gamma)=\boldsymbol{\nu}^\Phi_{\infty}[N_*](\Gamma).
\]
\end{proof}

\noi
The proof of Theorem~\ref{thm:image_absolute_normal_circle} follows from Lemma~\ref{lem:invariance_absolute_normal} and Theorem~\ref{thm:C1-density}.

\bibliographystyle{plain}
\bibliography{H2}
\end{document}